\newcommand{\ol}{\overline}
\newcommand{\wt}{\widetilde}
\newcommand{\tu}{\textup}
\renewcommand{\Re}{\operatorname{Re}}
\newcommand{\mc}{\mathcal}
\newcommand{\mbb}{\mathbb}
\newcommand{\arcsine}{\operatorname{arcsine}}
\newcommand{\cop}{\operatorname{cap}}
\newcommand{\dis}{\operatorname{dis}}
\newcommand{\dist}{\operatorname{dist}}
\renewcommand{\Im}{\operatorname{Im}}
\newcommand{\supp}{\operatorname{supp}}
\renewcommand{\(}{\left( }
\renewcommand{\)}{\right) }
\renewcommand{\[}{\left[ }
\renewcommand{\]}{\right] }
\newtheorem{theorem}{Theorem}[section]
\newtheorem{lemma}[theorem]{Lemma}
\newtheorem{corollary}[theorem]{Corollary}
\theoremstyle{definition}
\newtheorem{definition}[theorem]{Definition}
\theoremstyle{remark}
\newtheorem{remark}[theorem]{Remark}
\numberwithin{equation}{section}
\begin{document}

\title[Orthogonal Polynomials and $S$-curves]{Orthogonal Polynomials and $S$-curves}


\author[E. A. Rakhmanov]{E. A. Rakhmanov}

\address{Department of Mathematics \& Statistics University of South Florida, Tampa, FL 33620-5700, USA}

\email{rakhmano@mail.usf.edu}
\thanks{}


\subjclass[2000]{Primary }

\date{}

\dedicatory{Dedicated to Francisco (Paco) Marcell\'an on the occasion of his 60th birthday.}

\begin{abstract}
This paper is devoted to a study of $S$-curves, that is systems of curves in the complex plane whose equilibrium potential in a harmonic external field satisfies a special symmetry property ($S$-property).

Such curves have many applications. In particular, they play a fundamental role in the theory of complex (non-hermitian) orthogonal polynomials. One of the main theorems on zero distribution of such polynomials asserts that the limit zero distribution is presented by an equilibrium measure of an $S$-curve associated with the problem if such a curve exists. These curves are also the starting point of the matrix Riemann-Hilbert  approach to srtong asymptotics. Other approaches to the problem of strong asymptotics (differential equations, Riemann surfaces) are also related to $S$-curves or may be interpreted this way.

Existence problem $S$-curve in a given class of curves in presence of a nontrivial external field presents certain challenge. We formulate and prove a version of existence theorem for the case when both the set of singularities of the external field and the set of fixed points of a class of curves are small (in main case -- finite). We also discuss various applications and connections of the theorem.
\end{abstract}

\maketitle

\tableofcontents

\section{Complex orthogonal polynomials and $S$-curves}\label{sec1}

\subsection{Introductory Example}\label{subsec1.1}

There are two types of orthogonal polynomials. The more usual and older type of orthogonality is hermitian one with respect to a positive weight (measure). Associated polynomials present, in particular, basis in weighted Hilbert spaces and used for polynomial approximation. In the last few decades another type of orthogonal polynomials -- the complex (non-hermitian) ones with analytic weights came to attention. They appear first of all as denominators of Pad\'e approximants and other kinds of ``free poles rational approximations''. Such polynomials are related, for instance, to continued fractions and three terms recurrence relations. They satisfy certain model differential equations. Both types are also related to their particular boundary value problems and to their own equilibrium problems. Equilibrium problems related to complex orthogonal polynomials is the main topic of the paper.

We begin with an exampe of orthogonality with varying weights of the real line. In this case the two types of orthogonality coinside and  associated orthogonal polynomials have all the properties mentioned above. 

Let $\Phi_n(x)$ be a sequence of continuous real-valued functions on $\Gamma=[-1,1]$, $f(x)>0$ a.e.\  on $\Gamma$. Let
\begin{equation}
\label{eq1.1}
f_n(x)=e^{-2n\Phi_n(k)}f(x),\quad x\in\Gamma
\end{equation}
and polynomials $Q_n(x)=x^n+\dotsc$ are defined by orthogonality relations
\begin{equation}
\label{eq1.2}
\int_\Gamma Q_n(x)\ x^k~f_n(x)\,dx=0,
\quad k=0,1,\dotsc,n-1.
\end{equation}
The following theorem by A.\  A.\  Gonchar and the author was probably the first general result on zero distribution of orthogonal polynomials with varying (depending on the degree of the polynomial) weight; see \cite{GoRa84}.

\begin{theorem}\label{thm1.1}
If $\Phi_n(x)\to\Phi(x)$ uniformly on $\Gamma$ then
$$
\frac1n\,X\(Q_n\)=\frac{1}{n}\sum_{Q_n(\varsigma)=0}
\delta(\varsigma)\ \  {\overset{*}{\to}}\ \  \lambda
$$
where $\lambda=\lambda_\phi$ is the equilibrium measure of $\Gamma$ is the external field $\Phi$.
\end{theorem}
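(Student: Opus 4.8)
The strategy is to identify every weak-$*$ cluster point of the zero counting measures with $\lambda_\Phi$ by checking its variational (Frostman) conditions, and then to invoke uniqueness of the weighted equilibrium measure. Recall that $\Phi$ (a uniform limit of continuous functions) is continuous, that $\lambda_\Phi$ is the unique probability measure on $\Gamma$ minimizing the weighted energy $I_\Phi(\mu)=I(\mu)+2\int\Phi\,d\mu$ with $I(\mu)=\iint\log\frac{1}{|x-y|}\,d\mu(x)\,d\mu(y)$, and that $\lambda_\Phi$ is characterized, among probability measures on $\Gamma$, by the existence of a constant $F=F_\Phi$ with
\begin{equation*}
U^{\lambda_\Phi}(x)+\Phi(x)=F\ \ \text{q.e.\ on}\ \supp\lambda_\Phi,\qquad U^{\lambda_\Phi}(x)+\Phi(x)\ge F\ \ \text{q.e.\ on}\ \Gamma,
\end{equation*}
where $U^\mu(x)=\int\log\frac{1}{|x-y|}\,d\mu(y)$. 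Since each $f_n$ is a positive weight on $[-1,1]$, the zeros of $Q_n$ are simple and lie in $(-1,1)$, so the measures $\nu_n=\frac1n X(Q_n)$ form a tight family; it therefore suffices to show that every subsequential weak-$*$ limit $\nu$ equals $\lambda_\Phi$, since then the whole sequence converges. Fix such a subsequence. As $\frac1n\log|Q_n(z)|=-U^{\nu_n}(z)$ is locally bounded above, the principle of descent and the lower-envelope theorem give $\limsup_n\frac1n\log|Q_n(z)|\le-U^\nu(z)$ for all $z$, with equality quasi-everywhere.

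The first ingredient is an upper bound for the extremal quantity $m_n$ defined by $m_n^2=\int_\Gamma Q_n^2 f_n\,dx=\min\bigl\{\int_\Gamma P^2 f_n\,dx:\ P\ \text{monic of degree}\ n\bigr\}$. I would take monic $P_n^*$ of degree $n$ with $\frac1n\log|P_n^*|\to-U^{\lambda_\Phi}$ uniformly on $\Gamma$ (possible because $U^{\lambda_\Phi}$ is continuous on $\Gamma$; discretize $\lambda_\Phi$), and use the uniform convergence $\Phi_n\to\Phi$ together with $U^{\lambda_\Phi}+\Phi\ge F$ on $\Gamma$ to obtain $|P_n^*|^2 e^{-2n\Phi_n}\le e^{-2nF+o(n)}$ on $\Gamma$; by minimality this gives $m_n^2\le e^{-2nF+o(n)}$, hence $\limsup_n\frac1n\log m_n\le-F$. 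Combined with the pointwise behaviour of $\frac1n\log|Q_n|$ recorded above and a potential-theoretic argument (Jensen's inequality against the normalized weight $f\,dx$, localization to the sets $\{f\ge\delta\}$ with $\delta\downarrow0$ — this is where $f>0$ a.e.\ is used — and the upgrade of an a.e.\ statement to a q.e.\ one via the lower-envelope theorem and the domination principle), this forces
\begin{equation*}
U^\nu(x)+\Phi(x)\ \ge\ F\qquad\text{for q.e.}\ x\in\Gamma,
\end{equation*}
so in particular $\nu$ has finite logarithmic energy.

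The reverse inequality on $\supp\nu$ is the heart of the matter, and I expect this to be the main obstacle: one needs that $Q_n$ cannot be uniformly too small on the set where its zeros cluster. A clean route goes through the functions of the second kind $R_n(z)=\int_\Gamma\frac{Q_n(x)f_n(x)}{z-x}\,dx$: orthogonality of $Q_n$ to $1,x,\dots,x^{n-1}$ gives $R_n(z)=m_n^2\,z^{-n-1}\bigl(1+O(z^{-1})\bigr)$ at infinity and, for $z\notin\Gamma$,
\begin{equation*}
Q_n(z)\,R_n(z)=\int_\Gamma\frac{Q_n(x)^2 f_n(x)}{z-x}\,dx,
\end{equation*}
the right-hand side being the Cauchy transform of the positive measure $Q_n^2 f_n\,dx$ of total mass $m_n^2$. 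Estimating this transform above and below off $\Gamma$, comparing logarithmic asymptotics with the previous step, and analyzing the boundary values across $\supp\nu$ — precisely the point where the symmetry ($S$-) property of $[-1,1]$ for the limit measure is being used (here it holds automatically) — forces $U^\nu(x)+\Phi(x)\le F$ for $x\in\supp\nu$. A soft alternative is to establish the matching lower bound $\liminf_n\frac1n\log m_n\ge-F$ directly, using $f>0$ a.e.\ to pick a compact $K\subset\Gamma$ of positive Lebesgue measure with $f\ge\delta$ on $K$ carrying positive $\lambda_\Phi$-mass and a weighted Chebyshev/Nikolskii estimate on $K$, and then to conclude by contradiction; either way, the passage from a.e.\ to q.e.\ must be handled with care.

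Finally, the two displayed inequalities say exactly that $\nu$ is a probability measure on $\Gamma$ obeying the variational characterization of $\lambda_\Phi$ with the same constant $F$. The standard uniqueness argument then applies: for any probability measure $\sigma$ on $\Gamma$ of finite energy,
\begin{equation*}
I_\Phi(\sigma)-I_\Phi(\nu)=I(\sigma-\nu)+2\int\bigl(U^\nu+\Phi\bigr)\,d(\sigma-\nu)\ \ge\ 0,
\end{equation*}
since $I(\sigma-\nu)\ge0$ and $\int(U^\nu+\Phi)\,d\sigma\ge F=\int(U^\nu+\Phi)\,d\nu$ (the latter because $\nu$ has finite energy, so it does not charge the exceptional polar set and $U^\nu+\Phi=F$ $\nu$-a.e.); hence $\nu$ minimizes $I_\Phi$, and so $\nu=\lambda_\Phi$ by uniqueness of the minimizer. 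As every subsequence of $\{\nu_n\}$ has a further subsequence converging weak-$*$ to $\lambda_\Phi$, the whole sequence does, which is the assertion of the theorem.
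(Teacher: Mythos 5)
The paper does not prove Theorem \ref{thm1.1}; it is quoted from \cite{GoRa84}, so there is no in-text argument to compare against. Your plan reproduces the standard proof from that literature (see also Saff--Totik and Stahl--Totik): weak-$*$ compactness of the zero-counting measures, principle of descent and lower-envelope theorem, the upper bound $\limsup_n m_n^{1/n}\le e^{-F}$ via discretization of $\lambda_\Phi$, verification of the Frostman conditions for every cluster point $\nu$, and identification by uniqueness of the minimizer; the architecture and the final uniqueness computation are correct. Two cautions. First, the discretization step is stated too strongly: $\frac1n\log|P_n^*|$ cannot converge to $-U^{\lambda_\Phi}$ uniformly on $\Gamma$ (it is $-\infty$ at the inserted zeros); what the principle of descent gives is the one-sided uniform estimate $|P_n^*(x)|\le\exp\{-nU^{\lambda_\Phi}(x)+n\epsilon\}$ on $\Gamma$ for large $n$, which is fortunately the only direction your bound on $m_n$ needs (together with the observation that the polar set where $U^{\lambda_\Phi}+\Phi\ge F$ may fail is Lebesgue-null, so the inequality holds $f\,dx$-a.e.). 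Second, you correctly identify the crux -- the inequality $U^\nu+\Phi\le F$ on $\supp\nu$ and the a.e.-to-q.e. upgrade -- but both routes you sketch are genuinely delicate: the crude Cauchy--Schwarz bound on $R_n(z)$ loses a factor $\bigl(\int f_n\,dx\bigr)^{1/2}$ whose $n$-th root does not tend to $1$, so the $Q_nR_n$ comparison must be carried out with the normalization and localization of \cite{GoRa84} (this is exactly the content of the relations \eqref{eq1.8}--\eqref{eq1.9} in the paper), and the Nikolskii/Remez step that exploits $f>0$ a.e. has to be done with the weight $e^{-2n\Phi_n}$ kept attached to avoid an exponential loss. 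These are known to work, so your proposal is a correct plan rather than a complete proof; the gaps are of execution, not of conception.
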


In terms of the (total) energy
\begin{equation}
\label{eq1.3}
\mc{E}_\varphi(\mu)=\iint\log\frac1{(x-y)}\,
d\mu(x)\,d\mu(y)+2\int\Phi(x)\,d\mu(x),
\end{equation}
the equilibrium measure is defined by the minimization property in class $\mc{M}(\Gamma)$ of all unit positive Borel measures $\mc{M}$ on $\Gamma$
\begin{equation}
\label{eq1.4}
\mc{E}_\varphi(\lambda)=\min_{\mc{M}\in\mc{M}(\Gamma)}\mc{E}_\varphi(\mu).
\end{equation}
The actual theorem in \cite{GoRa84} was more general. It was formulated for $\Gamma=\mbb{R}$ (which requires certain conditions on $\Phi(x)$ as $x\to\pm\infty$), convergence of $\Phi_n(x)$ to $\Phi(x)$ was of a weaker type. It was also noticed that $\Gamma$ may be replaced by a system of curves in complex plane if standard hermitian orthogonality $\int_\Gamma Q_n(x)\ol{x}^kf(x)|dx|=0$ in \eqref{eq1.2} is assumed. Finally, it is possible to consider complex valued $f(x)$: the theorem is still valid under some assumption on $|f(x)|$ and $\arg~f(x)$.

Situation changes if $\Phi(x)=\varphi(x)+i\wt{\varphi}(x)$ is complex valued. In this case we have first of all assume that $f(x)$, $\Phi_n(x)$ and, therefore, $\Phi(x)$ are analytic in a domain $\Omega\supset(-1,1)$ (otherwise the problem is illposed). Then $\Gamma=[-1,1]$ in \eqref{eq1.2} may be replaced by any rectifiable curve $\Gamma$ in $\Omega$ connecting points $-1,1$ (existence of integral near those points is assumed). Then orthogonality relation \eqref{eq1.2} are preserved; orthogonality become non-hermitian (complex).

It turns out that assertion of theorem \ref{thm1.1} remains valid (with $\Phi$ replaced by $\varphi=\Re\Phi$) if there exists a curve $S$  in  class $\mc T$ of curves $\Gamma\subset\Omega$ connecting $-1,1$ with a special symmetry property ($S$-property) for its total potential
$$
V^\mu(z)+\varphi(z)=\int\log\frac1{|z-x|}\,d\mu(x)+\varphi(x)
$$
with external field $\varphi$.  This is a simple particular case of a general theorem \ref{thm1.3}  below. 
We note that the equilibrium measure $\lambda=\lambda_{\varphi,\Gamma}$ for a compact $\Gamma$ in the external field $\varphi$ defined by \eqref{eq1.3} is equivalently defined by the following relation in terms of total potential
\begin{equation}
\label{eq1.5}
\begin{split}
\(V^\lambda+\varphi\)(x)
&=w,~x\in\supp\lambda \\
&\ge w,~x\in\Gamma
\end{split}
\end{equation}
(Equation \eqref{eq1.5} uniquely defines pair of measure $\lambda\in\mc{M}(\Gamma)$ and constant $w=w_{\varphi,\Gamma}$ -- equilibrium constant.)

\begin{definition}\label{def1.2}
Let $S$ be a compact in $\mbb{C}$ and $\varphi$ be a harmonic function in a neighborhood of $S$. We say that $S$ has an $S$-property relative to external field $\varphi$ if there exist a set $e$ of zero capacity such that for any $\zeta\in S\sim e$ there exist a neighborhood $D=D(\zeta)$ for which $\supp(\lambda)\cap D$ is an analytic arc and, furthermore, we have
\begin{equation}
\label{eq1.6}
\frac{\partial}{\partial n_1}\(V^\lambda+\varphi\)(\zeta)
=\frac{\partial}{\partial n_2}\(V^\lambda+\varphi\)(\zeta)\quad \zeta\in\supp(\lambda)\sim e
\end{equation}
where $\lambda=\lambda_{\varphi,S}$ is the equilibrium measure for $S$ in $\varphi$ and $n_1,n_2$ are two oppositely directed normals to $S$ at $\zeta\in S$ (we can actually admit that  $\varphi$ has a small singular set included in $e$). 
\end{definition}

So, one of central for this paper concepts is defined by the pair of conditions \eqref{eq1.5} --  \eqref{eq1.6}  (with $\Gamma = S$). 
In particular, it follows by \eqref{eq1.5} that distribution of a positive charge presented by $\lambda$ is in the state of equilibrium on the fixed conductor $S$. 
 On the other hand,  $S$-property of compact $S$  in \eqref{eq1.6}  in electrostatic terms means that forces acting on element of charge at $\zeta$ from two sides of $S$ are equal. So, the equilibrium distribution  $\lambda$ of an $S$-curve will remain in equilibrium if we remove the condition that the charge belongs to $S$ and make the whole plane a conductor (except for a few insulating points -- endpoints of some of arcs in support of $\lambda$). Thus, $\lambda$ presents a distribution of charge which is in equilibrium in condacting domain; such an equilibrium is unstable. 

In terms  of energy \eqref{eq1.4} the $S$property  \eqref{eq1.6} of equilibrium measure is equivalent to the fact the $\lambda$ is a a critical point of weighted energy functional with respect tolocal variations. We will go into further details in sec. \ref{sec4} below; technically speaking, variations of equilibium energy is one of two fundamental components in the proof of the main result of the paper.

This was, in short, electrostatic characterization of the limit zero disributions of complex orthogonal polynomials. 


\subsection{General theorem on zero distribution of complex orthogonal polynomials}\label{subsec1.2}

Let $\Omega$ be a domain in $\mbb C$, $S$ be a compact in $\Omega$. Further, let $\Phi_n(z)\in H(\Omega)$ and $\Phi_n(z)\to\Phi(z)$ uniformly on compacts in $\Omega$ as $n\to\infty$. Finally, let $f\in H(\Omega\sim S)$ and polynomials $Q_u(z)=z^n+\dotsb$ are defined by orthogonality relations with weights $f_n=f e^{-2n\phi_n}$
\begin{equation}
\label{eq1.7}
\oint_S Q_n(z)z^k f_n(z)dz=0,\quad k=0,1,\dots,n-1;
\end{equation}
where integration goes over the boundary of $\ol{\mbb C} \sim S$ (if such itegral exist, otherwise integration goes over an equivalent cycle in $\ol{\mbb C} \sim S$). The following is a complex version of theorem \ref{thm1.1}: see \cite{GoRa87}.

\begin{theorem}\label{thm1.3}
 If $S$ has $S$-property in $\varphi=\Re\Phi(z)$ and complement to the support of equilibrium measure $\lambda=\lambda_{\varphi,S}$ is connected then\quad $\frac1n\,X\(Q_n\)\overset{*}{\ \to\ }\lambda$.
\end{theorem}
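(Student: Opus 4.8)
The plan is to use the potential-theoretic method of Gonchar and Rakhmanov. Write $\nu_n=\frac1n X(Q_n)$ for the normalized counting measure of the zeros of $Q_n$, and let $\lambda=\lambda_{\varphi,S}$, $w=w_{\varphi,S}$ be the equilibrium measure and constant from \eqref{eq1.5}; the goal is $\nu_n\overset{*}{\to}\lambda$. First I would reduce this to showing that every weak-$*$ partial limit of $\{\nu_n\}$ equals $\lambda$. For that one shows the zeros of $Q_n$ eventually lie in a fixed compact set, so $\{\nu_n\}$ is weak-$*$ precompact with unit-mass limits: if a zero $\zeta$ lay far outside the convex hull of $S$, factor $Q_n=(z-\zeta)\wt{Q}_{n-1}$ and use that the pairing $(P,R)\mapsto\oint_S PRf_n\,dz$ with $\deg P\le n$ depends only on the leading coefficient of $P$ to reach a quantitative contradiction; the same estimate rules out a positive fraction of mass escaping to $\infty$. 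Fix then a subsequence along which $\nu_n\overset{*}{\to}\nu$, a unit measure; one must prove $\nu=\lambda$, after which $-\frac1n\log|Q_n(z)|\to V^\nu(z)$ locally uniformly off $\supp\nu$ and quasi-everywhere, which finishes the theorem.

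Two ingredients are then assembled. \emph{(i) Functions of the second kind.} Set $R_n(z)=\frac1{2\pi i}\oint_S\frac{Q_n(t)f_n(t)}{z-t}\,dt$, holomorphic on $\ol{\mbb C}\setminus S$; by the orthogonality \eqref{eq1.7} one has $R_n(z)=\frac{h_n}{2\pi i}z^{-n-1}+O(z^{-n-2})$ as $z\to\infty$, with $h_n=\oint_S Q_n^2 f_n\,dz$, and the Plemelj jump $R_n^+-R_n^-=Q_n f_n$ on $S$ shows that $Q_{n+1}R_n-Q_nR_{n+1}$ is entire and bounded, hence equals the constant $h_n/2\pi i$. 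This Wronskian-type identity, together with the behaviour of $R_n$ at $\infty$, is the source of the lower bounds on $|Q_n|$ (the rare degenerate indices with $h_n=0$ are handled separately). \emph{(ii) Geometry of the $S$-property.} Reformulate \eqref{eq1.6}: on a set of full capacity, $\supp\lambda$ is a finite union of analytic arcs along which $u:=V^\lambda+\varphi\equiv w$ and which, transversally, are steepest-descent (orthogonal-trajectory) arcs of the holomorphic function $u^{\mathbb C}$ with $\Re u^{\mathbb C}=u$; in particular $u>w$ on both sides of each arc near it, and (as noted after \eqref{eq1.6}) $\lambda$ is a critical point of the weighted energy \eqref{eq1.3} under local variations of the support.

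Using analyticity of $f$ in $\Omega\setminus S$, deform the cycle in \eqref{eq1.7} and in the definition of $R_n$ so that it runs along $\supp\lambda$, and estimate the $n$-th root of the resulting integrals. On the deformed cycle the modulus of the integrand is $\exp\!\big(-2n(V^{\nu_n}+\varphi)+o(n)\big)$, and the $S$-property guarantees that $\supp\lambda$ is an honest saddle contour, so the phase is stationary only at the branch points (endpoints of the arcs of $\supp\lambda$), with no destructive cancellation below the exponential level prescribed by the equilibrium landscape. This yields, along the subsequence, $\limsup_n\frac1n\log|h_n|\le-2w$; combined with the Wronskian identity and the asymptotics of $R_n$ at $\infty$ one extracts the upper bound $V^\nu+\varphi\le w$ quasi-everywhere together with the matching reverse inequality $V^\nu+\varphi\ge w$ on $\supp\nu$. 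Since $\ol{\mbb C}\setminus\supp\lambda$ is connected, the maximum principle promotes these to $V^\nu+\varphi\equiv w$ on $\supp\nu$ and $\ge w$ off it, so $\nu$ satisfies the characterization \eqref{eq1.5}; by uniqueness of the equilibrium measure in the field $\varphi$, $\nu=\lambda$, as required.

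I expect the genuine difficulty to be precisely the non-cancellation step above: showing that the oscillatory integral $h_n$ (and $R_n$ near $\supp\lambda$) does not drop in modulus below $e^{-2nw+o(n)}$. This is exactly what the $S$-property is designed to control — it converts $\supp\lambda$ into a true steepest-descent contour, so the phase is stationary only at the finitely many branch points and the contributions there add up to leading exponential order — and the hypothesis that the complement of $\supp\lambda$ be connected is what allows the resulting potential-theoretic inequalities to be upgraded, via the maximum principle, to the identity $\nu=\lambda$. The precompactness reduction and the treatment of the degenerate indices $h_n=0$ are routine by comparison.
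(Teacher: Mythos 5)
The paper does not actually prove Theorem \ref{thm1.3}: it is quoted from \cite{GoRa87}, and the text only describes the method (Stahl's potential-theoretic technique based directly on the orthogonality relations, extended to a nontrivial external field). Your outline is aimed at that method and assembles the right objects --- the second-kind functions $R_n$, the Wronskian-type identity, weak-$*$ limits of $\frac1n X(Q_n)$, and a maximum-principle argument in the connected complement of $\supp\lambda$ --- but two of your steps are not merely unfinished, they are wrong as stated. First, the zeros of $Q_n$ do \emph{not} eventually lie in a fixed compact set: for non-hermitian orthogonality spurious zeros can occur anywhere in $\ol{\mbb C}$, and your argument only shows that the pairing $\oint P\,Q_n f_n\,dz$ is unchanged when one zero of a monic degree-$n$ test polynomial is moved, which is no contradiction. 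What is true (and is proved together with the main potential-theoretic estimates, not as a routine preliminary) is that no positive fraction of the zero mass escapes from a neighborhood of $\supp\lambda$. Second, your description of the equilibrium landscape has the wrong sign: by \eqref{eq1.5}--\eqref{eq1.6} both normal derivatives of $u=V^\lambda+\varphi$ on an open arc of $\supp\lambda$ equal $-\pi\,d\lambda/ds<0$, so $u<w$ on \emph{both} sides of the arc near it (compare $V^\lambda=w-g(z,\infty)$ for $S=[-1,1]$, $\varphi\equiv0$), not $u>w$ as you claim.

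The sign matters because it destroys the mechanism you propose for the step you yourself identify as the crux. Since $u\equiv w$ along $\supp\lambda$ and $u<w$ immediately off it, the modulus of the integrand $Q_n^2f_n$ is (to leading exponential order, and only once one already knows $\nu_n\approx\lambda$) \emph{constant} along the whole contour and larger off it: $\supp\lambda$ is the min--max contour, but it is not a steepest-descent contour with isolated saddles, the integrand does not decay along the arcs away from the branch points, and stationary phase at the endpoints says nothing about cancellation in the oscillatory integral $h_n=\oint Q_n^2f_n\,dz$ over the rest of the support. (You have moreover attached this non-cancellation argument to the \emph{upper} bound $\limsup\frac1n\log|h_n|\le-2w$, which needs only the triangle inequality --- though even that is circular in the order you present the steps, since it presupposes $V^{\nu_n}+\varphi\ge w-o(1)$ on the contour.) In \cite{GoRa87} and in Stahl's papers the lower bound is obtained by a genuinely different, potential-theoretic device: one exploits that $R_n$ vanishes to order $n+1$ at infinity and applies the maximum principle to $\frac1{2n}\log|Q_nR_n|$ in the connected complement of $\supp\lambda$, comparing it with $w-(V^\lambda+\varphi)$, which by the $S$-property extends harmonically across the open arcs (there it equals $\pm\Re\int^z\sqrt{R}\,dt$ in the notation of \eqref{eq3.5}). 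Your Wronskian identity is indeed an ingredient of such arguments, but you never connect it to the conclusion, and the stationary-phase substitute you offer in its place does not close the gap.
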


The last assertion is equivalent to convergence
\begin{equation}
\label{eq1.8}
\left|\wt{Q}_n(z)\right|^\frac{1}{n}
\overset{\cop}{\ \to\ }\exp\left\{-V^\lambda(z)\right\}
\end{equation}
in capacity in $\mbb{C}\sim\supp\lambda$ for spherically normalized polynomials $\wt{Q}_u=c_nQ_n$. In addition we have
\begin{equation}
\label{eq1.9}
\left|\oint_S\wt{Q}^2(t)f_n(t)\frac{dt}{t-z}\right|^\frac12
\overset{\cop}{\ \to}\  e^{-2w}\ ,\quad z\in\mbb{C}\sim\supp\lambda
\end{equation}
where $w=w_{\varphi,S}$ is the extremal constant for $S$.

\subsection{The existence problem for $S$-curves}\label{subsec1.3}

In a typical application of theorem \ref{thm1.3} in approximation theory  function (element) $f$ is analytic and multi-valued in $\Omega\sim e$ where   $e\subset\Omega$ is a small set, that is, element $f$ has analytic continuation along any path in the domain. We denote a class of such functions by $\mc{A}(\Omega\sim e)$. We also denote by $\mbb P_n$ set of polynomials of degree at most $n$.

For $f\in\mc{A}(\Omega\sim e)$ let $\mc{T}=\mc{T}_f$ be a set of systems of curves $\Gamma$ in $\Omega$ such that $f\in H(\Omega\sim\Gamma)$. Let  a system of polynomials $Q_n(z)\in\mbb{P}_n$ is defined by \eqref{eq1.7} with integration over $\Gamma\in\mc{T}$ (in place of $S$). Important is that  generally $S\in\mc{T}$ with $S$-property is not given and its existence is not known. 

This leads ro the problem of finding $S\in\mc{T}$ with $S$-property. The context may be different from what we have used as the original motivation;  the problem may not be related to complex orthogonal polynomials. In general, we have a domain $\Omega$ with a harmonic function external field in it (may be, with a small set of singular points) and a class $\mc{T}$ of curves in $\Omega$. We want to find a curve with $S$-property in $\mc{T}$ if such a curve exists. 


Fundamentally important is the case $\varphi\equiv 0$. First, $S$-existence problem in this case essentially includes a number of extremal problems in geometric theory of analytic functions. Second, the case is related to classical convergence problem for (diagonal) Pad\'e approximants to functions with branch points. We mention some of related results in the next section.

At the same time the case $\varphi\equiv 0$ is rather specific in context of $S$-existence problem. Under general assumptions on class $\mc{T}$, an $S$-curve exists and is unique \cite{St85}. In the presence of a nontrivial external field, neither existence nor uniqueness are guaranteed.


\section{Pad\'e approximants for functions with branch points}\label{sec2}

For a finite set ${A}=\left\{a_1,\dotsc,a_p\right\}\in\mbb{C}$ of distinct points we consider an element at $\infty$
\begin{equation}
\label{eq2.1}
f(z)=\sum^\infty_{n=0}\frac{f_k}{z^k}\in\mc{A}(\mbb{C}\sim{A}).
\end{equation}
Let $\pi_n(z)=(P_n/Q_n)(z)$ be diagonal Pad\'e approximants to $f$, that is polynomials $P_n,Q_n\in\mbb{P}_n$ ($\mbb{P}_n$ --- the set of all polynomials of degree at most $n$) are defined by
\begin{equation}
\label{eq2.2}
R_n(z):=\(Q_nf-P_n\)(z)=O\left(\frac1{z^{n+1}}\right),\quad z\to\infty
\end{equation}
(see \cite{BaGM81-1},  \cite{BaGM81-2}  for details).

First results on convergence of the sequence $\left\{\pi_n\right\}$ to $f$ (for ${A}\not\subset\mbb{R}$) were obtained by J.\  Nuttall who also made the following conjecture (see \cite{Nu77}, \cite{Nu84}). Let
\begin{equation}
\label{eq2.3}
\mc{T}=\{\Gamma\subset\mbb{C}:f\in H(\mbb{C}\sim\Gamma)\}
\end{equation}
and $S\in\mc{T}$ is defined by the minimal capacity property
\begin{equation}
\label{eq2.4}
\cop(S)=\min_{\Gamma\in\mc{T}}\cop(\Gamma)
\end{equation}
Then sequence $\left\{\pi_n\right\}$ converges to $f$ in capacity in the complement to
$$
\pi_n\overset{\cop}{\to}f,\quad z\in\mbb{C}\sim S
$$
The conjecture has been proven by H. Stahl \cite{St85},\cite{St86} under more general assumption $\cop(A)=0$. More exactly, he proved the following

\begin{theorem}\label{thm2.1}
Let $\cop(A)=0$ then
\begin{enumerate}
\item[(i)] there exist and unique $S\in\mc{T}_f$  with $S$-property for which $w=f^+-f^-\not\equiv 0$ on any analytic arc in $S$; 

\item[(ii)] for denominator $Q_n$ of Pad\'e approximants we have $\frac1n\,\mc{X}\(Q_n\)\overset{*}{\ \to\ }\lambda$ where $\lambda $ is equilibium measure for $S$.
\end{enumerate}
\end{theorem}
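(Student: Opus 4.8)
The plan is to prove the two parts separately, deducing (ii) from (i) together with Theorem~\ref{thm1.3}. \emph{For existence in~(i)}, note that $\cop(A)=0$ makes $\mc{T}_f$ nonempty and $c_0:=\inf_{\Gamma\in\mc{T}_f}\cop(\Gamma)$ finite and positive. I would produce a minimizer as a Hausdorff limit $S$ of (the closures of) a minimizing sequence $\Gamma_k$, passing to a subsequence and using lower semicontinuity of capacity to conclude $\cop(S)\le c_0$. The delicate point is that the limit remain admissible --- one must check that $f$ continues holomorphically and single-valuedly to $\ol{\mbb{C}}\sim S$, i.e.\ that the monodromy-killing role of the $\Gamma_k$ is not destroyed in the limit --- and this is exactly where $\cop(A)=0$ enters; in \cite{St85,St86} it is handled by an explicit reduction rather than by abstract compactness. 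Given a minimizer, I would fill in its bounded complementary components (changing neither $\cop$ nor admissibility) and then delete every analytic subarc across which $f^+-f^-\equiv0$ (across such an arc $f$ extends, so deletion preserves admissibility and, by minimality, cannot change $\cop$); this yields a minimizer $S$ with connected complement for which $w=f^+-f^-\not\equiv0$ on each arc and $\supp\lambda=S$.

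\emph{For the $S$-property and uniqueness in~(i)}: with $\varphi\equiv0$ one has $V^\lambda=\log(1/\cop(S))-g_S(\cdot,\infty)$, where $g_S$ is the Green function of $\ol{\mbb{C}}\sim S$ with pole at $\infty$, so \eqref{eq1.6} asserts that the two one-sided normal derivatives of $g_S$ agree on $\supp\lambda$. I would derive this from the first-variation formula for the equilibrium energy $\mc{E}_\varphi(\lambda_\Gamma)$ under a deformation of $\Gamma$ by a vector field --- the variation-of-equilibrium-energy technique of Section~\ref{sec4}: this derivative is a linear functional of the deforming field that vanishes identically precisely when \eqref{eq1.6} holds, so if \eqref{eq1.6} failed at some $\zeta\in\supp\lambda$ one could choose a deformation supported near $\zeta$ and away from $A$ (hence admissible) along which $\cop(\Gamma)$ strictly decreases, contradicting minimality of $S$. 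For uniqueness, suppose $S_1,S_2$ are two such minimizers; the $S$-property lets one reflect each $g_{S_i}$ analytically across the arcs of $S_i$, and together with the common logarithmic singularity at $\infty$, the equality $\cop(S_1)=\cop(S_2)=c_0$, the fixed monodromy of $f$, and the jump normalization (which forbids a spurious extra arc in one but not the other), a maximum-principle comparison of $g_{S_1}$ and $g_{S_2}$ forces $S_1=S_2$, as in \cite{St85}.

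\emph{For part~(ii)}: expanding $R_n=Q_nf-P_n$ at $\infty$ in \eqref{eq2.2} and using $\oint_{|z|=R}P_n(z)z^k\,dz=0$ gives
\begin{equation*}
\oint_{|z|=R}Q_n(z)\,z^k f(z)\,dz=0,\qquad k=0,1,\dots,n-1 .
\end{equation*}
Since $f\in H(\ol{\mbb{C}}\sim S)$, the circle $|z|=R$ may be deformed within $\ol{\mbb{C}}\sim S$ onto a cycle around $S$, turning this into the orthogonality \eqref{eq1.7} over $S$ with $\varphi\equiv0$, $f_n\equiv f$, and $Q_n$ monic of degree $n$. By part~(i), $S$ has the $S$-property in the field $\varphi\equiv0$ and $\ol{\mbb{C}}\sim\supp\lambda=\ol{\mbb{C}}\sim S$ is connected; Theorem~\ref{thm1.3} therefore applies and yields $\tfrac1n\,\mc{X}(Q_n)\overset{*}{\ \to\ }\lambda$.

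\emph{Main obstacle.} The hard analytic core is entirely in part~(i): the existence of a capacity minimizer in the monodromy-constrained class $\mc{T}_f$ --- where capacity is only lower semicontinuous and admissibility can fail under Hausdorff limits --- and the regularity of the minimizer, namely that it is a locally finite union of analytic arcs, without which the normal derivatives in \eqref{eq1.6} and the variational computation above are not even meaningful. Both are established in \cite{St85,St86}. Granting~(i), the passage to~(ii) is a short reduction to Theorem~\ref{thm1.3}.
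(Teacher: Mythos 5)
The paper does not prove Theorem \ref{thm2.1}: it is quoted as Stahl's theorem, with references to \cite{St85}, \cite{St86}, and the surrounding text only records its consequences. So there is no internal proof to compare against; what can be compared is your sketch versus the machinery the paper does develop. Your treatment of the $S$-property of the minimizer (first variation of the equilibrium energy under a deformation vanishing near $A$, forcing \eqref{eq1.6}) is exactly the paper's own route in Sections \ref{sec4} and \ref{subsec9.10} (Lemma \ref{lem3.5}, Theorem \ref{thm3.4}), specialized to $\varphi\equiv 0$, where maximizing $\mc{E}_0[\Gamma]=\log(1/\cop(\Gamma))$ is the same as minimizing capacity. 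Your part (ii) --- passing from \eqref{eq2.2} to the contour orthogonality and invoking Theorem \ref{thm1.3} --- is the intended reduction; note only that Theorem \ref{thm1.3} is itself a cited result (\cite{GoRa87}, and for this very case \cite{St86}), and that Pad\'e denominators need only satisfy $\deg Q_n\le n$, a normalization issue absorbed by the spherical normalization in \eqref{eq1.8}.

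The one step of your sketch that would fail as written is the existence argument: capacity is \emph{not} lower semicontinuous with respect to Hausdorff convergence on the class of all compacts. A sequence of finite sets (capacity zero) converges in $\ol{\delta}_H$ to a segment of positive capacity, so for a Hausdorff limit $S$ of a minimizing sequence one gets $\cop(S)\ge c_0$ for free, not $\cop(S)\le c_0$. This is precisely the failure mode the paper warns about when motivating condition (iic) of Theorem \ref{thm3.1} (bounded number of connected components): lower semicontinuity of $\cop$, equivalently upper semicontinuity of the Robin constant (Theorem \ref{thm3.2}), is recovered only on the class $\mc K_s$, and that recovery is the content of the long Section \ref{sec9}. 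In $\mc T_f$ the number of components of an admissible cut is not bounded a priori, so one must either first reduce to a subclass with controlled components or argue as Stahl does. You do correctly flag that admissibility of the Hausdorff limit is delicate and that the regularity of the minimizer (a locally finite union of analytic arcs) must be established before the variational computation is meaningful; both, together with uniqueness, are supplied by \cite{St85}, \cite{St86} and constitute the genuinely hard part. Your uniqueness sketch is the thinnest portion, but uniqueness is likewise outside the paper's own machinery, which explicitly notes that it can fail once a nontrivial external field is present.
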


Note that (ii) implies  \eqref{eq1.8} and \eqref{eq1.9} with $f_n \equiv 1$. H. Stahl created an original potential theoretic method for studying limit zero distribution of Pad\'e denominators $Q_n(z)$ based directly on orthogonality relations
$$
\oint_\Gamma Q_n(z)\,z^k\,f(z)\,dz=0,\quad
k=0,1,\dotsc,n-1;\quad\Gamma\in\mc{T}_f.
$$
for these polynomials written with $\Gamma = S$ where $S$ ihas property \eqref{eq1.6}.  The method was further developed in \cite{GoRa87} for the case of presence nontrivial external field; Theorem \ref{thm1.3} has been proved using this devepopment. 

 It is important to observe that according to assertion \tu{(i)}  of the theorem an compact $S$-compact always exists  (there is no ``if'' in the theorem). For a finite set $A$ this part of the theorem is close to well-known Chebotarev's problem in geometric function theory.

Chebotarev's problem was the problems of existence and characterization of a continuum of minimal capacity containing $A$. It was solved independently by Gr\"otzsch and Lavrentiev in the 1930s. The following theorem is just one example of a large class of theorems presenting solutons of extremal problems in function theory in terms of quadratic differentials \tu{(}for details we refer to \cite{Ku80}\tu{) and \cite{St84}}.

\begin{theorem}\label{thm2.2}
For a given set ${A}=\left\{a_1,\dotsc,a_p\right\}$ of $p\ge2$ distinct points in $\mbb{C}$ there exist a unique set $S$
$$
\cop(S)\ =\ \min_{\Gamma\in\mc{T}}\cop(\Gamma)
$$
where $\mc{T}$ is the class of continua $\Gamma\subset\mbb{C}$ with ${A}\subset\Gamma$. The complex Green function $G(z)=G(z,\infty)$ for $\ \ol{\mbb{C}}\sim S$ is given by
$$
G(z)=\int^z_a\sqrt{{V(t)}/{A(t)}}\,dt,
\quad V(z)=z^{p-2}+\dotsb\in\mbb{P}_{p-2}.
$$
where $ A(z) = (z-a_1) \dots(z-a_p\ )$ and $V$ is uniquely defined by $A$. Thus,  we have
$$
S=\{z:\Re G(z)=0\}
$$
so that $S$ is a union of some of critical trajectories of quadratic differential $(V/A)(dz)^2$. It is also the zero level of green function $g(z)=\Re G(z)$ of two-sheeted Riemann surface for $\sqrt{V/A}$.
\end{theorem}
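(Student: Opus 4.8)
The plan is to establish existence and uniqueness of the minimal-capacity continuum separately, then to identify its Green function with the stated elliptic-type integral. For \emph{existence}, I would work in the class $\mc{T}$ of continua containing $A$ and use the standard compactness argument for capacity: capacity is lower semicontinuous with respect to Hausdorff convergence of compact connected sets, the family of continua of uniformly bounded diameter containing $A$ (with diameter bounded by, say, twice the diameter of any admissible competitor) is compact in the Hausdorff metric, and $\inf_{\Gamma\in\mc{T}}\cop(\Gamma)>0$ since any continuum containing two distinct points has positive capacity. Hence a minimizer $S$ exists. For \emph{uniqueness}, the key is strict convexity: the logarithmic energy functional is strictly convex on probability measures, and if $S_1,S_2$ were two distinct minimizers one forms an intermediate admissible continuum — concretely, one argues via the equilibrium measures and the fact that the Green function of the complement is determined by the extremal measure — to contradict minimality. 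I expect this to follow the classical Gr\"otzsch–Lavrentiev line of reasoning.

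Next I would derive the structure of $S$ from a variational (local symmetry) argument. The minimal-capacity property forces an $S$-property for the equilibrium measure $\lambda$ of $S$ with external field $\varphi\equiv 0$: perturbing $S$ locally along a vector field fixing the points of $A$ and differentiating $\cop(\Gamma)$ yields exactly the balance condition $\partial(V^\lambda)/\partial n_1 = \partial(V^\lambda)/\partial n_2$ on $\supp\lambda$. Introduce the complex Green function $G(z)=G(z,\infty)$ of $\ol{\mbb C}\sim S$, normalized so that $G(z)=\log z + O(1)$ at $\infty$ and $\Re G = g(\cdot,\infty)$ is the ordinary Green function. The $S$-property translates into the statement that $G'(z)^2$ extends to a single-valued meromorphic function on $\ol{\mbb C}$: across each analytic arc of $S$ the two boundary values of $G$ differ by sign, so $(G')^2$ has matching boundary values, and the isolated exceptional points are at worst removable or simple poles. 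Counting zeros and poles: $(G')^2$ has a double zero structure accounted for by the $p$ points of $A$ in the denominator (each $a_j$ a branch point of the two-sheeted surface, contributing a simple pole to $(G')^2\,dz^2$ as a quadratic differential), a double pole at $\infty$ coming from $G\sim\log z$, and the remaining zeros packaged into a polynomial $V$ of degree $p-2$. This gives $(G')^2 = V(z)/A(z)$ with $A(z)=\prod_j(z-a_j)$ and $\deg V = p-2$, and integrating yields $G(z)=\int_a^z\sqrt{V(t)/A(t)}\,dt$.

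Then I would pin down $V$ and the description $S=\{z:\Re G(z)=0\}$. The $p-1$ coefficients of $V=z^{p-2}+\dots$ (leading coefficient fixed by the normalization at $\infty$) are determined by the period conditions: $\Re G$ must be single-valued on $\ol{\mbb C}\sim S$, i.e. $\Re\oint_\gamma \sqrt{V/A}\,dz = 0$ over a basis of cycles, which is exactly the requirement that $S$ be a genuine zero level set rather than merely a union of trajectory arcs; a dimension count (real period conditions versus real parameters in $V$) shows $V$ is uniquely determined by $A$. Finally, $S=\{\Re G=0\}$ is the zero level of $g$, and since $dG = \sqrt{V/A}\,dz$, the condition $\Re G=0$ along an arc means $(\sqrt{V/A}\,dz)$ is purely imaginary there, i.e. $(V/A)(dz)^2<0$, so $S$ is a union of critical trajectories of the quadratic differential $(V/A)(dz)^2$; lifting to the two-sheeted Riemann surface of $\sqrt{V/A}$ realizes $S$ as the zero level of the Green function $g$ of that surface.

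The main obstacle I anticipate is not existence (routine compactness) but the \emph{uniqueness} together with the rigorous justification that the period/normalization conditions single out $V$: one must show the map from polynomials $V$ (satisfying the reality-of-periods constraints) to admissible $S$-curves is a bijection, which requires controlling how the trajectory structure of $(V/A)(dz)^2$ degenerates as $V$ varies — essentially a monodromy/continuity argument on the space of quadratic differentials with prescribed simple poles at $A$. This is the heart of the Gr\"otzsch–Lavrentiev theorem, and I would cite \cite{Ku80} and \cite{St84} for the detailed trajectory analysis rather than reproducing it.
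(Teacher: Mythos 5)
The paper does not actually prove Theorem \ref{thm2.2}: it is quoted as the classical Gr\"otzsch--Lavrentiev solution of Chebotarev's problem, with the detailed trajectory analysis delegated to \cite{Ku80} and \cite{St84}. So there is no internal proof to compare against; what one can say is that your outline is the standard one, and that its existence and $S$-property components are exactly what the paper's own machinery (Theorem \ref{thm3.1} with $\varphi\equiv 0$ applied to the class of continua containing $A$, proved in Section \ref{sec9} by the max--min energy method) delivers. Note, though, that your appeal to ``capacity is lower semicontinuous with respect to Hausdorff convergence of compact connected sets'' is true but not free: it fails badly for general compacts (finite sets approximate anything in Hausdorff metric), and the proof for continua is essentially the content of the paper's Theorem \ref{thm9.8}, a quantitative Green's-function and balayage estimate for compacts whose components have capacity bounded below. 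That step deserves either a citation or a proof.

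The one genuine gap is the uniqueness mechanism you propose. Strict convexity of the logarithmic energy on probability measures gives uniqueness of the equilibrium measure \emph{on a fixed compact}, but it does not produce an ``intermediate admissible continuum'' from two distinct minimizers $S_1,S_2$: there is no convex combination of continua, and neither $S_1\cup S_2$ nor $S_1\cap S_2$ is a competitor in $\mc T$ with controlled capacity ($S_1\cap S_2$ need not be connected or contain $A$). The classical uniqueness proof runs on different fuel --- Gr\"otzsch's length--area (module) inequality, or equivalently the rigidity of the critical-trajectory configuration of $(V/A)(dz)^2$ once the period conditions are imposed --- and you rightly identify this, together with the solvability and injectivity of the period problem for $V$, as the hard core to be taken from \cite{Ku80} and \cite{St84}. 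A smaller caveat: the pole count giving $(G')^2=V/A$ with a simple pole at every $a_j$ presumes each $a_j$ is an endpoint of an arc of $S$; if some $a_j$ lies in the interior of an arc (three collinear points, say), $(G')^2$ is regular there and $V$ acquires a compensating zero, so the formula $\deg V=p-2$ survives but the ``simple pole at each $a_j$'' reading does not.
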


Stahl proved that asserions of theorem \ref{thm2.2} remain valid for $S$ in theorem \ref{thm2.1}. In his recent paper \cite{St06} he introduced a more general concept of ``extremal cuts'' \tu{(}or maximal domain\tu{)} related to an element $f$ of an analytic function at $\infty$. Let $f\in\mc{A} \(\ol{\mbb{C}}\sim e\)$ where $e$ is a compact of positive capacity. Let $\mc{T}_f=\mc{T}$ is defined by \eqref{eq2.3} above. The main result in \cite{St06} is the existence and uniqueness theorem of a compact $S\subset\mc{T}_f$ with \eqref{eq2.4}.In general such $S$ does not have an $S$-property \tu{(}but contains a part with this property\tu{)}. The paper contains also an extended review related to the standard case $\cop(e)=0$.


\section{An existence theorem for an $S$-curve in harmonic external field.}\label{sec3}

We formulate conditions of existence in terms of Hausdorff metric and local variations; terms are explained in Sections \ref{subsec3.1} and \ref{subsec3.2} next.

\subsection{Hausdorff metric}\label{subsec3.1}
\hfill \\
Classes of compacts. Let $d\(z_1,z_2\)=\left|z_1-z_2\right|\(1+\left|z_1\right|^2\)^{-1/2}\(1+\left|z_2\right|^2\)^{-1/2}$ be the chordal distance in $\ol{\mbb{C}}$. For two compacts $K_1,K_2\subset\ol{\mbb{C}}$ their Hausdorff distance $\delta_H$ is defined as
\begin{equation}
\label{eq3.1}
\ol{\delta}_H\(K_1 K_2\)=\inf\left\{\delta>0:K_1\subset
\(K_2\)_{\delta},K_2\subset\(K_1\)_{\delta}\right\}
\end{equation}
where $(K)_{\delta}-\delta$--neighborhood of $K$ in the chordal metric:
\begin{equation}
\label{eq3.2}
(K)_{\delta}=\left\{z\in\ol{\mbb{C}}:\min_{\zeta\in K}
d(z,\zeta)<\delta\right\}.
\end{equation}

In what follows we consider mainly compacts which are finite unions of continua. For such compacts $K$ we denote by $s(K)$ the number of connected components of $K$.


Let $\Omega$ be a domain in $\ol{\mbb{C}}$, then set of all compacts $K\subset\ol{\Omega}$ is a compact in $\ol\delta_{H}$-metric. The same is true for all compacts $K\subset\ol{\Omega}$ with $s(K)\leq s$

We use chordal metric on $\ol{\mbb C}$ and associated Hausdorff metric $\ol{\delta}_H$ only to formulate the theorem \ref{thm3.1} below. In the proof of the theorem in Sec. \ref{sec9} we immediately reduce consideration of compacts $K\subset\ol{\mbb C}$ to the case $K\subset\mbb C$; so that we can use Euclidean distance $d(z_1,z_2)=|z_1 - z_2|$,  associated neighbourhoods $(K)_\delta$ and corresponding Hausdorff distance $\delta_H$. Metrics are equivalent on compacts in any fixed disc in the open plane.

\subsection{Local variations}\label{subsec3.2}

For a closed set $A\subset\ol{\mbb{C}}$  and a complex valued functions
$$
h(z)\in\mbb{C}^1\(\ol{\mbb{C}}\)\quad\text{ with }\quad h(z)=0,\quad z\in\mc{A}.
$$
We introduce one parametric family of local variations with fixed set $A$ in the direction of $h$ - transformations of $\ol{\mbb{C}}$ defined by
\begin{equation}
\label{eq3.3}
z\to z^t=z+th(z),\quad t\ge0
\end{equation}
We also call them  $A$-variations  (in the direction of $h$). For small enough $t$  transformations  \eqref{eq3.3} are one-to-one; in a stadard way they generate variations of compacts $K^t$ and measures $\mu^t$
\begin{equation}
\label{eq3.4}
K\to K^t = \{z^t: z\in K\}, \quad  d\mu^t(z^t) = d\mu(z)
\end{equation}

Variations defined above leave the point $\infty$ unchanged; it is always technically convenient to  include infinity into the fixed set. In many cases we are interested in variations of a particular compact  $K$, then the condition $h(z)\in\mbb{C}^1\(\ol{\mbb{C}}\)$ may be replaced by $h(z)\in\mbb{C}^1\(\Omega\)$ where $\Omega$ is a domain containing $K$.
It is important to note that in many cases we use more restricted classes of A-variations; in particular,  we often use functions $h$ satisfying a stronger condition $h=0$ in a neighbourhood of $A $ (see remark \ref{rem4.1}).
%
%
\subsection{Main Theorem}\label{subsec3.3}

The following is a version of an existence theorem.

\begin{theorem}\label{thm3.1}
Suppose that the external field $\varphi$ and class of curves $\mc{T}$ satisfy the following conditions:
\begin{enumerate}
\item[(i)]  $\varphi$ is harmonic in $\ol{\mbb{C}}\sim{e}$ where ${e}\subset{\ol{\mbb{C}}}$ is a finite set

\item[(ii)]
\begin{enumerate}
\item[(a)]  $\mc{T}$ is closed in a $\ol{\delta}_{H}$-metric

\item[(b)] For a finite set ${A}\subset{\ol{\mbb{C}}}$  $\quad\mc{T}$ is open in topology of $A$-variations:\newline
$\Gamma\in\mc{T}$ implies $\Gamma^t\in\mc{T}$ for small enough $t$ for any $A$-variation.

\item[(c)]  For some natural $s$ any $\Gamma\in\mc{T}$ has at most  $s$ connected components;
\end{enumerate}

\item[(iii)]
\begin{enumerate}
\item[(a)]  There exist $\Gamma\in\mc{T}$ with\newline
$\mc{E}_{\varphi}[\Gamma]=\inf\limits_{\mu\in\mc{M}(\Gamma)}E_{\varphi}(\mu)>-\infty$;

\item[(b)]  $\mc{E}_{\varphi}\{\mc{T}\}=\sup\limits_{\Gamma\in\mc{T}}E_{\varphi}[\Gamma]<+\infty$.
\end{enumerate}

\item[(iv)]  For any sequence $\Gamma_n\in\mc{T}$ that converges in $\ol{\delta}_H$ metric \tu{(}$\ol{\delta}_H(\Gamma_n,\Gamma)\to 0$\tu{)} there exists a disc $D$ such that $\Gamma_n\sim D\in\mc{T}$
\end{enumerate}

Then there exists a compact $S\subset\mc{T}$ with the $S$-property.

Moreover, for the equilibrium measure $\lambda=\lambda_{\varphi,S}$ of the compact $S$ the following condition is satisfied
\begin{equation}\label{eq3.5}
R(z)=\Big(\int\frac{d\lambda(t)}{t-z}+\Phi'(z)\Big)^2\in H(\Omega\sim A\cup e)
\end{equation}
that is, the function $R(z)$ is holomorphic (analytic and single valued) in $\Omega\sim(e\cup A)=\Omega' \ $ ($ \varphi = \Re \Phi$). 

Furthermore, $R$ has simple poles at each point $a\in A\sim e$ (singularities at points in $e$ are essentially those of $(\Phi')^2$.)

Finally, $supp\ \lambda$ us a union of analytic arcs which are trajectories of quadratic differential - $R(z)(dz)^2$ and this differential is the one with closed trajectories; we also have $d\lambda(z)=\frac{1}{\pi}|\sqrt{R}dz|$ along arcs in $supp\ \lambda$.

\end{theorem}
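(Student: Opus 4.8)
The plan is to obtain $S$ as a solution of a max-min problem for the weighted energy, and then to extract the $S$-property from the first-order optimality (stationarity under $A$-variations) together with the rigidity of the resulting quadratic differential. Concretely, I would proceed as follows.

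First, \emph{the variational existence step.} On $\mc{T}$, equipped with the Hausdorff metric $\ol{\delta}_H$, consider the functional $\Gamma\mapsto\mc{E}_\varphi[\Gamma]=\inf_{\mu\in\mc{M}(\Gamma)}\mc{E}_\varphi(\mu)$. By (iii)(b) it is bounded above, so $M:=\sup_{\Gamma\in\mc{T}}\mc{E}_\varphi[\Gamma]<\infty$; take a maximizing sequence $\Gamma_n\in\mc{T}$. By (ii)(c) each $\Gamma_n$ has at most $s$ components, so the family lies in a $\ol{\delta}_H$-compact set of compacts, and one can pass to a subsequence with $\Gamma_n\to S$ in $\ol{\delta}_H$; by (ii)(a), $S\in\mc{T}$. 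The main point here is upper semicontinuity of $\Gamma\mapsto\mc{E}_\varphi[\Gamma]$ along such convergence: since the equilibrium measures $\lambda_{\varphi,\Gamma_n}$ also converge weak-$*$ (subsequentially) to some $\mu^*$ supported on $S$, and since $\mc{E}_\varphi$ is lower semicontinuous on measures, $\mc{E}_\varphi(\mu^*)\le\liminf\mc{E}_\varphi(\lambda_{\varphi,\Gamma_n})=M$, hence $\mc{E}_\varphi[S]\le\mc{E}_\varphi(\mu^*)\le M$; combined with $\mc{E}_\varphi[S]\le M$ trivially and $\mc{E}_\varphi[S]\ge M$ from maximality in the limit, we get $\mc{E}_\varphi[S]=M$ and $\mu^*=\lambda:=\lambda_{\varphi,S}$. (Condition (iv), allowing removal of a disc, is what prevents mass and energy from escaping to a ``bad'' part of the limit and is used to make this semicontinuity argument honest; condition (iii)(a) guarantees the sup is attained by an admissible competitor with finite energy, so $M$ is finite and the extremal $S$ is nondegenerate.)

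Second, \emph{the stationarity step.} For any $A$-variation $z\mapsto z^t=z+th(z)$ with $h\in\mbb{C}^1(\ol{\mbb{C}})$, $h=0$ on $A\cup e\cup\{\infty\}$, condition (ii)(b) gives $S^t\in\mc{T}$ for small $t$, so $\mc{E}_\varphi[S^t]\le\mc{E}_\varphi[S]$. Computing $\frac{d}{dt}\big|_{t=0^+}\mc{E}_\varphi[S^t]$ — here one uses that $\lambda^t$ is a near-optimal measure on $S^t$, so only the explicit $t$-dependence through the transported measure $\lambda^t$ matters (the envelope/Hadamard-type argument) — yields a linear functional in $h$ which must be $\le0$ for all admissible $h$, and applying the same to $-h$ forces it to vanish: $S$ is a critical point of the weighted energy under $A$-variations. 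As indicated in Section~\ref{subsec1.1}, this first-order condition is exactly the $S$-property \eqref{eq1.6} on $\supp\lambda$ away from a polar exceptional set, provided $\supp\lambda$ is locally analytic there; and that local analyticity, plus the structure of $R$, comes out of the next step.

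Third, \emph{the quadratic-differential step.} The vanishing of the variational derivative for all $h$ vanishing on $A\cup e$ translates (by the standard computation of the derivative of a logarithmic energy under a vector field, writing the Cauchy transform $\widehat\lambda(z)=\int d\lambda(t)/(t-z)$ and integrating by parts) into the statement that $R(z)=(\widehat\lambda(z)+\Phi'(z))^2$ extends holomorphically across $\supp\lambda$ and across $\mbb{C}\sim(A\cup e)$, i.e. $R\in H(\Omega\sim(A\cup e))$; the $\frac1z$-decay of $\widehat\lambda$ and $\Phi'$ at the points handled by the variation pins down the poles: at $a\in A\sim e$ the only possible singularity of the a priori multivalued $\widehat\lambda$ is a simple pole of $R$ (a square-root branch point of $\sqrt R$), while at points of $e$ the singularity is inherited from $(\Phi')^2$. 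Then on $\supp\lambda$, from the jump relation $\widehat\lambda^+ +\widehat\lambda^- +2\Phi' = 0$ (the $S$-property rewritten) and $\widehat\lambda^+ -\widehat\lambda^- = -2\pi i\,\lambda'$, one gets that $\sqrt{R}\,dz$ is purely imaginary along $\supp\lambda$, hence $\supp\lambda$ consists of trajectories of $-R(z)(dz)^2$, and $d\lambda = \frac1\pi|\sqrt R\,dz|$ there. That these trajectories are \emph{closed} (the differential is one with closed trajectories) follows from \eqref{eq1.5}: the equilibrium condition $V^\lambda+\varphi\equiv w$ on $\supp\lambda$ means the period of $\sqrt R\,dz$ around any cycle in $\Omega\sim(A\cup e)$ enclosing part of $\supp\lambda$ is purely imaginary while the ``real-part potential'' is single-valued, forcing the trajectory structure to close up.

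\textbf{Main obstacle.} I expect the genuinely hard part to be the variational/compactness step: proving upper semicontinuity of $\Gamma\mapsto\mc{E}_\varphi[\Gamma]$ under $\ol{\delta}_H$-convergence when $\varphi$ has singularities in $e$ and mass could concentrate near $e$ or near points where components merge in the limit — controlling this is exactly what conditions (iii)(a), (iii)(b) and especially (iv) are engineered to handle, and turning them into a clean semicontinuity estimate (rather than just a formal max-min) is the technical heart. The stationarity computation and the passage to the quadratic differential are, by contrast, essentially the classical Stahl/Gonchar--Rakhmanov machinery (cf. Theorems~\ref{thm2.1}, \ref{thm2.2}) adapted to a nonzero harmonic $\varphi$, and should go through with care but no surprises.
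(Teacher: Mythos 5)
Your overall architecture coincides with the paper's: maximize the equilibrium energy over $\mc T$, show that the maximizer's equilibrium measure is $(A,\varphi)$-critical via the envelope identity for the first variation, and read off the $S$-property, the holomorphy of $R$, and the quadratic-differential structure from the theory of critical measures. Your second and third steps track Section \ref{subsec9.10} and Section \ref{sec4} faithfully and are sound modulo the references you invoke.

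The genuine gap is in your first step, and it is not merely a deferred technicality: the argument you sketch for upper semicontinuity is circular. Weak-$*$ convergence $\lambda_{\varphi,\Gamma_n}\overset{*}{\to}\mu^*$ with $\supp\mu^*\subset S$, together with lower semicontinuity of the energy, gives only $\mc E_\varphi[S]\le\mc E_\varphi(\mu^*)\le\liminf\mc E_\varphi(\lambda_{\varphi,\Gamma_n})=M$; but $\mc E_\varphi[S]\le M$ is trivially true since $S\in\mc T$ and $M$ is the supremum. The inequality actually needed is $\mc E_\varphi[S]\ge M$, and you obtain it only by asserting ``maximality in the limit,'' which is precisely the statement to be proved. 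Because $\mc E_\varphi[\cdot]$ is an \emph{infimum} over measures, exhibiting some measure $\mu^*$ on $S$ can only bound $\mc E_\varphi[S]$ from above, never from below. To bound it from below one must transfer near-minimizers between $S$ and the nearby $\Gamma_n$; the paper does this (Theorem \ref{thm9.8}) by taking the equilibrium measure of one compact and sweeping it by balayage onto the other, controlling the energy increment by the Green's function estimate $g(z)\le\sqrt{\delta/c}$ of Lemma \ref{lem9.9} and the harmonic-extension estimate of Lemma \ref{lem9.7} (Milloux's theorem). Both estimates require a lower bound on the capacities of the connected components, which is then secured for the class $\mc K_s$ by connecting nearby components with segments of small capacity (Theorem \ref{thm9.10}, Lemma \ref{lem9.11}); fields with singularities at $e$ are handled by regularization near $e$ with a quantitative bound on the mass the equilibrium measure can place near the singular set (Lemmas \ref{lem9.12} and \ref{lem9.15}), followed by a monotone limit to get semicontinuity in general. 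None of this machinery is recoverable from weak-$*$ compactness and lower semicontinuity alone, so this step of your proposal must be replaced rather than fleshed out.
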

A short outline of the proof is presented in Section \ref{subsec3.4}; details are contained in Section \ref{sec9}. Particular versions of the theorem and parts of related techniques were discussed in \cite{GoRa87,PeRa94,KaRa05,MaRa11}. Here we make a few general remarks.

Points of sets $e$ and $A$ are both singularities of $R(z)$ in \eqref{eq3.5} above; but points from $A\sim e$ are simple poles of $R$, singularities of $R(z)$ at points in $e$ are same as in $\Phi'(z)^2$. Besides that there is not much difference between sets $e$ and $A$ - both will eventually be fixed points of variations and will often unify them in single set $A$. 

Theorem \ref{thm3.1} remains valid if both sets $A$ and $e$ are closed sets of capacity zero. Case of finite sets is, probably, the most interesting, since $R(z)$ in this case has a finite number of singular points in $\ol{\mbb C}$ and in many cases may be explicitly found. On the other hand sest of sigularities of positive capacity present an essential problem which we do not discuss in this paper.

In the connection with basic properties (ii) of class of curves $\mc T$ we note that associated assumptions may be stated in different terms. We have selected an abstract way which has probably some advantage in generality. 

The following  example is fundamental. Let $f\in\mc A\ (\ol{\mbb C}\sim A)$ and let $\mc T_f$ be the class \eqref{eq2.3} of admissible cuts $\Gamma$ associated with $f$. In many applications the class $\mc{T}$ may be directily or indirectly defined as $\mc{T}_f$ with some $f\in\mc A\ (\ol{\mbb C}\sim A)$.

It is important to observe that $S$-curve exists in $\mc T_f$ but the class does not formally satisfy conditions of the theorem \ref{thm3.1}. It is not closed in $\ol{\delta}_H$ -metric since the definition contain an implicit condition that the point where element $f$ is defined (in the current case it is $\infty$) is not in any $\Gamma$. The situation is typical; many important classes of curves are not closed.  However, in most of naturally arising situations the maximizing sequence converges to an element which belong to $\mc T_f$. Moreover, it is usually not difficult to prove it;  we will have a few examples below. 

Alternatively we may define $\mc T$ as collection of finite unions $\Gamma=\cup\Gamma_j$ of continua $\Gamma_j$; each of them connects certain groups of points in $A$ or separates one group from others. Such classes $\mc T$ would satisfy conditions of the theorem if we do not have exceptional points). In this connection we note that some particular $S$-problems (especially with rational external field) are similar to  "extremal partitions" or "modulii"  problems in geomtric function theory; see \cite{Ku80}, \cite{St84} and also  \cite{MaRa11}.

Theorem \ref{thm3.1} will not be valid without condition (iic) since any compact may be approximated in $\ol{\delta}_H$-metric by a finite number of points. Continuity of equilibrium energy is not preserved in such circumstances.

We present a few examples related to conditions (iii). Let  $\mc T$ be class of continua connecting points  $0$  and  $1$  and  $\varphi = \log z $ then $S$-compact does not exist since $\mc{E}_{\varphi}[\Gamma] = - \infty$ for any $\Gamma\in\mc{T}$ (in connection with possible applications it is more reasonable to include curves $\Gamma$ bypassing zero; then (iiia) is satisfied and $S$-curve exists). In another typical example  $\mc T$ is set of  Jordan contours separating zero and infinity $\varphi =  a\log z,\ a>0 $.  If $a\ne 1/2$ we have $\mc{E}_{\varphi}\{\mc{T}\} = \infty$ so that the condition (iiib) is not satisfied and $S$-curve does not exist (if $a < 1/2$ a maximizing sequence $\Gamma_n$ collapses to zero; if $a>1/2$ it collapses to $\infty$). If $a=1/2$ then any circle cenred ar zero is an $S$-curve which is an example of non uniqueness.

Condition (iv) is technical. Haussdorf closure always contains large sets which are of no interest in the connection with maximization for equilibrium energy. For instance, let $\mc T$ be the family of analytic arcs containing two fixed points. Then its closure $\ol{\mc T}$ in Haussdorf metric contains $\Gamma=\ol{\mbb C}$. Without condition (iv) such possibilities will require unnecessary separate consideration.

\subsection{Outline of the proof: $\max$-$\min$ energy method}\label{subsec3.4}

The method has two components. First, we study continuity properties of the equilibrium energy functional
\begin{equation}
\label{eq3.6old}
\mc{E}_\varphi[\Gamma]=\inf_{\mu\in\mc{M}(\Gamma)}\mc{E}_\varphi(\mu):\mc{T}\to[-\infty,+\infty]
\end{equation}
on $\mc{T}$ with the Hausdorff metric. The main result here is the following

\begin{theorem}\label{thm3.2}
Functional $\mc{E}_\varphi$ above is upper semi-continuous.
\end{theorem}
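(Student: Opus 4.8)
The plan is to show that if $\Gamma_n \to \Gamma$ in the $\ol{\delta}_H$-metric, then $\limsup_n \mc{E}_\varphi[\Gamma_n] \le \mc{E}_\varphi[\Gamma]$. The natural strategy is to start from the equilibrium measures $\lambda_n = \lambda_{\varphi,\Gamma_n}$, extract a weak-$*$ convergent subsequence $\lambda_{n_k} \overset{*}{\to} \mu$, and show that $\mu$ is a competitor for the energy problem on $\Gamma$, so that $\mc{E}_\varphi[\Gamma] \le \mc{E}_\varphi(\mu) \le \liminf_k \mc{E}_\varphi(\lambda_{n_k})$. Since $\mc{E}_\varphi[\Gamma_n] = \mc{E}_\varphi(\lambda_n)$ by definition, passing to a subsequence realizing the $\limsup$ and then extracting a further weak-$*$ limit gives the claim. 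First I would note that since all $\Gamma_n$ lie in (a compact exhaustion of) $\Omega$ and $s(\Gamma_n) \le s$, the family of compacts is itself compact in the Hausdorff metric, so the limit $\Gamma$ is again a finite union of continua; moreover $\supp \lambda_n \subset \Gamma_n$ forces $\supp \mu \subset \Gamma$ (this is the standard fact that Hausdorff convergence of the supports' ambient sets, together with weak-$*$ convergence, confines the limit mass to the limit set), hence $\mu \in \mc{M}(\Gamma)$.

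The core of the argument is the lower semicontinuity of $\mu \mapsto \mc{E}_\varphi(\mu)$ under weak-$*$ convergence. The logarithmic energy term $\iint \log\frac{1}{|x-y|}\,d\mu(x)d\mu(y)$ is a classical lower-semicontinuous functional of $\mu$ in the weak-$*$ topology (the integrand is lower semicontinuous and bounded below on compacts, so one applies the Portmanteau-type principle to $\mu_n \times \mu_n \to \mu \times \mu$, after truncating $\log\frac{1}{|x-y|}$ from above by continuous functions and using monotone convergence). The external-field term $2\int \varphi\,d\mu$ is the delicate one: $\varphi$ is harmonic off the finite set $e$, hence continuous on $\ol{\mbb C} \sim e$, but it may tend to $-\infty$ (or $+\infty$) near points of $e$, so $\int \varphi\,d\mu$ is only an integral of a function that is lower semicontinuous after suitable sign considerations. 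Here I would invoke condition (iii): (iiib) gives a uniform upper bound $\mc{E}_\varphi[\Gamma_n] \le \mc{E}_\varphi\{\mc{T}\} < +\infty$, which prevents the energies from escaping to $+\infty$, and (iiia) guarantees the limit problem on $\Gamma$ is not identically $-\infty$-trivial. Combined with the lower bound on $\iint \log\frac{1}{|x-y|}$ coming from the boundedness of the ambient region, the uniform energy bound yields tightness/equi-integrability that controls the field term; one then writes $\varphi = \varphi_+ - \varphi_-$ near each singular point and handles $\int \varphi_+\,d\mu$ by Fatou/lower semicontinuity while $\int \varphi_-\,d\mu$ is controlled by the uniform energy bound.

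The main obstacle I anticipate is precisely the behavior of the external field near the finite singular set $e$: one must rule out mass of the $\lambda_n$ concentrating near points of $e$ in a way that makes $\int \varphi\,d\lambda_n$ jump discontinuously in the limit. The clean way to do this is a two-sided estimate. For the upper bound on $\limsup \mc{E}_\varphi[\Gamma_n]$ one does \emph{not} actually need the equilibrium measures of $\Gamma_n$ to behave well; instead, fix the (near-)equilibrium measure $\lambda = \lambda_{\varphi,\Gamma}$ on the limit set, push it forward to a measure $\lambda_n'$ supported on $\Gamma_n$ via a map realizing the Hausdorff closeness (or via balayage onto $\Gamma_n$), check that $\mc{E}_\varphi(\lambda_n') \to \mc{E}_\varphi(\lambda)$ using continuity of $\varphi$ away from $e$ plus the fact that $\lambda$ puts no mass on the polar set $e$ when the energy is finite, and conclude $\mc{E}_\varphi[\Gamma_n] \le \mc{E}_\varphi(\lambda_n') \to \mc{E}_\varphi[\Gamma]$. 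This direction is the substance of upper semicontinuity; the weak-$*$/lower-semicontinuity discussion above is then only needed to confirm that the transported measures are genuine competitors and that the limit is attained, i.e. that $\mc{E}_\varphi[\Gamma]$ is finite precisely when it should be. I would structure the proof around this transport-of-the-limit-measure idea, with the singular set $e$ handled by the observation that finite logarithmic energy forces measures to ignore polar sets, so the worrisome concentration near $e$ simply cannot occur along any sequence with uniformly bounded energy.
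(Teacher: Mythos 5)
Your opening strategy proves the wrong inequality. Extracting a weak-$*$ limit $\mu$ of the equilibrium measures $\lambda_{n}=\lambda_{\varphi,\Gamma_n}$ and using lower semicontinuity of the energy yields $\mc{E}_\varphi[\Gamma]\le\mc{E}_\varphi(\mu)\le\liminf_k\mc{E}_\varphi(\lambda_{n_k})$, i.e.\ $\mc{E}_\varphi[\Gamma]\le\liminf_n\mc{E}_\varphi[\Gamma_n]$ --- that is \emph{lower} semicontinuity of $\Gamma\mapsto\mc{E}_\varphi[\Gamma]$, not the claimed $\limsup_n\mc{E}_\varphi[\Gamma_n]\le\mc{E}_\varphi[\Gamma]$. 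Only your third paragraph points in the right direction, and it does coincide with the paper's mechanism: take the equilibrium measure $\lambda$ of the limit compact and move it onto $\Gamma_n$ (the paper uses balayage, in Theorem \ref{thm9.8}, starting from the equilibrium measure of one compact swept onto the other). But the single sentence ``check that $\mc{E}_\varphi(\lambda_n')\to\mc{E}_\varphi(\lambda)$'' is where essentially all of Section \ref{sec9} lives. A ``map realizing the Hausdorff closeness'' does not exist in any useful sense (nearest-point projection is neither continuous nor injective, and a non-injective pushforward can create infinite logarithmic energy), and for balayage the energy changes by the Green's energy plus $2\int g(\zeta,\infty)\,d\lambda$ (Lemma \ref{lem9.6}); to make this small one needs $g(z,\infty)\le\sqrt{\delta/c}$ (Lemma \ref{lem9.9}), which requires a \emph{lower bound $c$ on the capacity of every component} of the target compact. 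Hausdorff convergence in $\mc{K}_s$ gives no such bound, which is exactly why the paper must first join small components by segments of small capacity and show via Lemma \ref{lem9.11} that this barely changes the equilibrium energy. None of this is recoverable from ``continuity of $\varphi$ away from $e$.''

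The treatment of the singular set is also wrong in the strength you need. That finite-energy measures charge no polar set does not prevent $\int_{(e)_r}\varphi\,d\lambda_n$ from being non-negligible when $\varphi$ is unbounded near $e$; the paper's quantitative substitute is the estimate $\lambda(\Sigma)^2\le\bigl(\mc{E}(\lambda)+2\log(2R)\bigr)/\gamma(\Sigma)$ (inequality \eqref{eq9.23}), and even this only yields \emph{continuity} when $\varphi$ is bounded near $e$ (Lemma \ref{lem9.12}) or bounded below (Lemma \ref{lem9.15}, using Lemma \ref{lem9.3} to truncate $\varphi$ from above on the support of the equilibrium measure). For general $\varphi\in C^\ast$ the functional genuinely fails to be continuous --- concentration near a point where $\varphi\to-\infty$ \emph{can} occur --- and the paper obtains upper semicontinuity only by exhibiting $\mc{E}_\varphi$ as the decreasing limit of the continuous functionals $\mc{E}_{\varphi_n}$ with $\varphi_n=\max\{\varphi,-n\}$. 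That final truncation step is the actual reason the conclusion is ``upper semicontinuous'' rather than ``continuous,'' and it is absent from your proposal.
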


We present the proof of the theorem in Section \ref{sec9} where we introduce a larger class $\mc K$ of compacts $K$ in $\mbb{C}$ with a bounded number of connected components. First we consider continuous external field $\varphi$ (not supposed to be harmonic) and prove that the functional
$$
\mc{E}_\varphi:\mc{K}\to(-\infty,+\infty)
$$
is continuous. Then we use continuous fields $\varphi$ to approximate fields with singular points.

Theorem \ref{thm3.2} is actually the most extended part of the proof; the rest of it is comparetively easy reduced to known results. An immediate  corollary of the theorem is the following
\begin{corollary}\label{cor3.3}
There exist the extremal compact $s$ with
\begin{equation}
\label{eq3.6}
\mc{E}_\varphi[S]=\sup_{\Gamma\subset\mc{T}}\mc{E}_\varphi[\Gamma];\quad S\in\mc{T}.
\end{equation}
\end{corollary}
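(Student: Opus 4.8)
The goal is to prove Corollary \ref{cor3.3}: the existence of an extremal compact $S \in \mc{T}$ maximizing $\mc{E}_\varphi[\Gamma]$ over $\mc{T}$.

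\medskip

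\textbf{Plan.} The corollary is a standard compactness-plus-semicontinuity argument, and the plan is to combine Theorem \ref{thm3.2} with the topological hypotheses on $\mc{T}$ and the boundedness assumptions (iii). First I would invoke hypothesis (iiib), $\mc{E}_\varphi\{\mc{T}\} = \sup_{\Gamma\in\mc{T}} \mc{E}_\varphi[\Gamma] < +\infty$, to guarantee that the supremum in \eqref{eq3.6} is a finite real number, call it $M$; hypothesis (iiia) guarantees it is not $-\infty$ (the sup is over a nonempty family containing a curve of finite energy), so $M \in \mbb{R}$. Next, choose a maximizing sequence $\Gamma_n \in \mc{T}$ with $\mc{E}_\varphi[\Gamma_n] \to M$.

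\medskip

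\textbf{Extracting a limit.} By hypothesis (iic) every $\Gamma_n$ has at most $s$ connected components, so all the $\Gamma_n$ lie in the class of compacts $K \subset \ol{\mbb{C}}$ with $s(K) \le s$, which, as recorded in Section \ref{subsec3.1}, is compact in the $\ol{\delta}_H$-metric. Hence a subsequence (not relabeled) converges: $\ol{\delta}_H(\Gamma_n, S) \to 0$ for some compact $S \subset \ol{\mbb{C}}$ with $s(S) \le s$. Here one must be slightly careful about whether the limit stays in $\mc{T}$: by hypothesis (iia), $\mc{T}$ is closed in the $\ol{\delta}_H$-metric, so indeed $S \in \mc{T}$. (Condition (iv) is a safety valve ensuring the Hausdorff limit is not a degenerate large set such as all of $\ol{\mbb{C}}$; it can be invoked if needed to pass to a genuinely admissible limit, but once (iia) gives $S \in \mc{T}$ directly, this is automatic.)

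\medskip

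\textbf{Passing to the limit in the energy.} Now apply Theorem \ref{thm3.2}: $\mc{E}_\varphi$ is upper semicontinuous on $\mc{T}$ with the Hausdorff metric, so
\begin{equation*}
\mc{E}_\varphi[S] \ \ge\ \limsup_{n\to\infty} \mc{E}_\varphi[\Gamma_n] \ =\ \lim_{n\to\infty} \mc{E}_\varphi[\Gamma_n] \ =\ M.
\end{equation*}
On the other hand $\mc{E}_\varphi[S] \le M$ by the definition of $M$ as the supremum and the fact that $S \in \mc{T}$. Therefore $\mc{E}_\varphi[S] = M = \sup_{\Gamma\subset\mc{T}} \mc{E}_\varphi[\Gamma]$ with $S \in \mc{T}$, which is exactly \eqref{eq3.6}. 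This completes the proof.

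\medskip

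\textbf{Main obstacle.} There is essentially no obstacle at the level of the corollary itself; all the genuine work has been pushed into Theorem \ref{thm3.2} (the upper semicontinuity of the equilibrium energy functional) and into the structural hypotheses (ii) on $\mc{T}$. The only point demanding a little care is the interplay between the two topologies: $\mc{T}$ is assumed closed in the Hausdorff metric (ii a) but open in the topology of $A$-variations (ii b), and one should make sure the maximizing-sequence argument uses only the Hausdorff closedness — which it does, as laid out above. The delicate analytic content, namely controlling $\liminf$ of the inner minimum $\inf_{\mu \in \mc{M}(\Gamma_n)} \mc{E}_\varphi(\mu)$ as the curves vary, is precisely the substance of Theorem \ref{thm3.2} and is treated in Section \ref{sec9}; here we simply quote it.
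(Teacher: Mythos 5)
Your overall strategy --- maximizing sequence, Hausdorff compactness of the class of compacts with at most $s$ components, closedness of $\mc{T}$, and then the upper semicontinuity of Theorem \ref{thm3.2} --- is exactly the route the paper takes (the argument is carried out in Section \ref{subsec9.10}). The one point I would push back on is your parenthetical dismissal of condition (iv) as ``automatic'' once (iia) gives $S\in\mc{T}$. It is not automatic: the semicontinuity of Theorem \ref{thm3.2} is actually \emph{proved} in Section \ref{sec9} for the class $\mc{K}_s$ of compacts in the finite plane (indeed, after the normalization of Lemma \ref{lem9.1}, for compacts in $\ol{D}_{1/2}$), and to apply it to a sequence of spherical compacts $\Gamma_n\to S$ one must first know that all the $\Gamma_n$ and their limit omit a common disc, so that a single linear fractional map carries the whole sequence into a bounded planar region where the proven statement applies. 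A Hausdorff limit of curves in $\mc{T}$ can perfectly well pass through every point of $\ol{\mbb{C}}$ --- the paper's own example is that the closure of the family of analytic arcs through two fixed points contains $\Gamma=\ol{\mbb{C}}$ --- and at such a limit the semicontinuity claim, as established, has no content. Condition (iv) is exactly the device that removes this case: the paper replaces $\Gamma_n$ by $\Gamma_n\sim D\in\mc{T}$, observes that this is still a maximizing sequence because the equilibrium energy functional is monotone decreasing under inclusion (so $\mc{E}_\varphi[\Gamma_n\sim D]\ge\mc{E}_\varphi[\Gamma_n]$), and notes that the new sequence converges to $S\sim D$, which omits the disc $D$ and lies in $\mc{T}$ by (iia); the extremal compact produced is then $S\sim D$ rather than $S$ itself.

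With that single repair --- run your limit argument on $\Gamma_n\sim D$ instead of $\Gamma_n$ --- your proof coincides with the paper's.
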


The second component of the method is a method of energy variations developed in  \cite{PeRa94}, \cite{KaRa05}  and  \cite{MaRa11}  which lead to the following

\begin{theorem}\label{thm3.4}
The compact $S$ in \eqref{eq3.6} above has $S$-property.
\end{theorem}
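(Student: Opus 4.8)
The plan is to show that the extremal compact $S$ from Corollary~\ref{cor3.3}, which maximizes $\mc{E}_\varphi[\Gamma]$ over $\mc{T}$, automatically satisfies the $S$-property~\eqref{eq1.6}. The mechanism is a two-sided variational argument: $S$ is a maximum of $\mc{E}_\varphi$ over the class $\mc{T}$, while for a \emph{fixed} compact the equilibrium measure $\lambda=\lambda_{\varphi,S}$ is the \emph{minimum} of the weighted energy over $\mc{M}(\Gamma)$. The combination is a genuine $\max$-$\min$ situation, and at the saddle point the first variation of the equilibrium energy under admissible deformations must vanish. The key computation is the formula for the derivative of $\mc{E}_\varphi[\Gamma^t]$ at $t=0$ under an $A$-variation $z\mapsto z+th(z)$: since $\lambda^t\in\mc{M}(S^t)$ is a competitor for the inner infimum and $\lambda$ is the minimizer on $S$, one gets, by a standard differentiation of the energy along the flow (see \cite{PeRa94},\cite{KaRa05},\cite{MaRa11}),
\begin{equation}
\label{eq-planvar}
\frac{d}{dt}\Big|_{t=0}\mc{E}_\varphi[S^t]
=-\,\Re\int \Big(\int\frac{h(z)-h(t)}{z-t}\,d\lambda(t)\Big)d\lambda(z)
-2\,\Re\int \partial_z\Phi(z)\,h(z)\,d\lambda(z),
\end{equation}
which can be rewritten compactly as $\Re\oint R(z)\,h(z)\,dz$ up to a constant factor, where $R$ is the function in \eqref{eq3.5}. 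First I would make this variational identity precise and check that it holds for the restricted class of $A$-variations with $h$ supported away from $A\cup e$ (Remark~\ref{rem4.1}), so that the singular points cause no trouble.

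Next I would exploit the fact that $S$ is a maximum: the derivative in \eqref{eq-planvar} must be $\le 0$ for every admissible $h$ by condition (iib) (the class is open under $A$-variations, so $S^t\in\mc{T}$ for small $t\ge0$); but the variation $h$ and $-h$ are both admissible at an interior maximum, hence the derivative is exactly $0$ for all such $h$. The vanishing of $\Re\oint R(z)\,h(z)\,dz$ for all $h\in C^1$ with $h=0$ near $A\cup e$ forces $R$ to be holomorphic in $\Omega\sim(A\cup e)$: an isolated non-polar singularity (a branch point or a pole of order $\ge 2$ coming from $\sqrt{R}$ being multivalued) would produce a test function $h$ against which the pairing is nonzero, by a residue/Cauchy-transform argument. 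This yields the single-valuedness claim \eqref{eq3.5}. The local nature of the singularities at $A\sim e$ (simple poles) and at $e$ (those of $(\Phi')^2$) then follows from the structure of the Cauchy transform of $\lambda$ near an endpoint together with the external field; this is where one uses that near a point of $A$ not in $\supp\lambda$ the deformation can push $S$ freely, while near a point of $A\cap\supp\lambda$ the arc must terminate there, producing exactly an inverse-square-root (hence $R$ has a simple pole).

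From holomorphy of $R$ one reads off the geometry. Locally on a smooth piece of $\supp\lambda$ away from the exceptional set, writing the Cauchy transform as $\int d\lambda/(z-t)=-2\partial_z\Phi(z)\pm\pi i\,\lambda'$ on the two sides, the identity $R=(\,\cdot\,)^2$ being single-valued across the arc is equivalent to the boundary values of $\sqrt{R}\,dz$ being real, i.e. the arc is a trajectory of the quadratic differential $-R(z)(dz)^2$; and then $d\lambda=\tfrac1\pi|\sqrt R\,dz|$ is just the jump formula. Equivalently, single-valuedness of $R$ means the two normal derivatives of $V^\lambda+\varphi$ agree up to sign, which is precisely \eqref{eq1.6}; so the $S$-property and the holomorphy of $R$ are two faces of the same statement. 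That the quadratic differential has closed trajectories is forced because $\supp\lambda$ is compact and each trajectory arc ends only at zeros or poles of $R$, with the mass constraint $\int d\lambda=1$ and the equilibrium condition \eqref{eq1.5} pinning down the periods. The main obstacle is the rigorous justification of the variational formula \eqref{eq-planvar} and, more delicately, the passage from ``the real part of a period-type pairing vanishes for all admissible $h$'' to ``$R$ is globally single-valued on $\Omega'$'': one must handle the non-compactly-supported directions, the behaviour at $\infty$ (handled by keeping $\infty$ fixed, per Section~\ref{subsec3.2}), and the possibility that $\supp\lambda$ a priori has a complicated (non-smooth) structure on a set of zero capacity — this last point is where Definition~\ref{def1.2}'s exceptional set $e$ and the regularity theory for equilibrium measures in smooth external fields must be invoked. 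I would close by remarking that all these steps are carried out in detail in Section~\ref{sec9}, assembling Theorem~\ref{thm3.2}, Corollary~\ref{cor3.3}, and the variational lemmas into the statement.
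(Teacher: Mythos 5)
Your proposal follows the paper's own argument essentially verbatim: the two-sided comparison (using $\lambda^t$ as a competitor on $S^t$ and pulling the equilibrium measure of $S^t$ back to $S$) identifies the one-sided derivative of $\mc{E}_\varphi[S^t]$ with $D_h\mc{E}_\varphi(\lambda)$, and maximality of $S$ together with admissibility of both $h$ and $-h$ forces this to vanish, so $\lambda$ is $(A,\varphi)$-critical. The remaining step — that criticality yields holomorphy of $R$ and hence the $S$-property — is exactly what the paper invokes from \cite{MaRa11} via Lemma \ref{lem3.5} and Section \ref{sec4}, so the approach is the same.
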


The only part of the theorem \ref{thm3.4} which require a proof is the following.

\begin{lemma}\label{lem3.5}
Equilibrium measure of extremal compact $S$ in \eqref{eq3.6} is a critical measure associated with external field $\varphi$ and fixed set $A$.
\end{lemma}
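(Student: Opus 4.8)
The plan is to show that the extremal compact $S$, which maximizes $\mc{E}_\varphi[\Gamma]$ over $\mc{T}$, has the property that no $A$-variation can increase the equilibrium energy, and then to translate this ``no increase'' statement into the analytic symmetry identity \eqref{eq1.6} that defines a critical measure. First I would fix an arbitrary admissible direction $h \in \mbb{C}^1(\ol{\mbb{C}})$ with $h = 0$ on $A$ (using the more restrictive class in which $h$ vanishes near $A$, as in remark \ref{rem4.1}), and consider the two-sided family $z \mapsto z^t = z + t h(z)$ for small real $t$ of either sign. By condition (iib), $S^t \in \mc{T}$ for $|t|$ small; hence by extremality $\mc{E}_\varphi[S^t] \le \mc{E}_\varphi[S]$ for all small $t$. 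Since $t$ ranges over a neighbourhood of $0$, if $t \mapsto \mc{E}_\varphi[S^t]$ is differentiable at $t = 0$ its derivative must vanish.

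The core computation is therefore the first variation of the equilibrium energy. Writing $\lambda = \lambda_{\varphi,S}$ and pushing it forward to $\lambda^t$ on $S^t$ via \eqref{eq3.4}, one has $\mc{E}_\varphi[S^t] \le \mc{E}_\varphi(\lambda^t)$, with equality at $t=0$; combined with the reverse inequality from extremality this pinches the derivative, so it suffices to differentiate $\mc{E}_\varphi(\lambda^t)$. Expanding
\[
\mc{E}_\varphi(\lambda^t) = \iint \log\frac{1}{|z^t - y^t|}\, d\lambda(z)\, d\lambda(y) + 2\int \varphi(z^t)\, d\lambda(z),
\]
and using $\log|z^t - y^t| = \log|z - y| + t\,\Re\frac{h(z) - h(y)}{z - y} + O(t^2)$ together with $\varphi(z^t) = \varphi(z) + t\,\Re(2\partial\varphi(z)\,h(z)) + O(t^2)$ (where $\partial = \partial_z$ and I use that $\varphi$ is harmonic, hence $\nabla\varphi \cdot h$ is the real part of the complex derivative paired with $h$), the linear term becomes
\[
\frac{d}{dt}\Big|_{t=0} \mc{E}_\varphi(\lambda^t) = -\Re \iint \frac{h(z) - h(y)}{z - y}\, d\lambda(z)\, d\lambda(y) + 2\Re\int 2\,\partial\varphi(z)\,h(z)\, d\lambda(z).
\]
After symmetrizing the double integral in $z,y$ this collapses to a single integral against $h$ of the expression $\int \frac{d\lambda(y)}{z - y} + 2\partial\varphi(z)$, i.e. (up to a factor) of $\sqrt{R(z)}$ with $R$ as in \eqref{eq3.5}; setting the whole thing to zero for all admissible $h$ is exactly the statement that $\lambda$ is a critical measure, which by the standard equivalence (to be recalled from the cited works \cite{PeRa94,KaRa05,MaRa11}) is equivalent to the $S$-property \eqref{eq1.6} on $\supp\lambda \sim e$.

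The main obstacle is not the formal Taylor expansion but justifying that the interchange of differentiation and integration is legitimate and that the remainder terms are genuinely $O(t^2)$ uniformly. The logarithmic kernel is singular on the diagonal, so one must check that $\iint |\partial_t \log|z^t - y^t||\, d\lambda(z)d\lambda(y)$ is finite and controlled; here the hypothesis $\mc{E}_\varphi[S] > -\infty$ from (iiia), which gives $\iint \log\frac{1}{|z-y|}\,d\lambda\,d\lambda < \infty$, is what saves us, since $\frac{h(z)-h(y)}{z-y}$ is bounded (as $h \in \mbb{C}^1$) and hence integrable against $\lambda \times \lambda$ with no diagonal issue at all — the difference quotient of a $C^1$ function is bounded. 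The singularities of $\varphi$ at $e$ are handled by the convention that $h$ vanishes near $A \supseteq$ (the relevant fixed points), and any genuine singularities of $\varphi$ inside $\supp\lambda$ are of capacity zero and fall in the exceptional set $e$ of definition \ref{def1.2}; one argues that the contribution of a small neighbourhood of $e$ to the variation is negligible, e.g. by first taking $h$ supported away from $e$ and then a limiting argument, or by invoking that $\lambda$ puts no mass on polar sets. Once these analytic points are in place, the vanishing of the first variation for every admissible $h$ is precisely the definition of a critical measure, completing the proof of Lemma \ref{lem3.5}.
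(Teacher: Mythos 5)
Your overall strategy --- treat $t=0$ as an interior maximum of $t\mapsto\mc{E}_\varphi[S^t]$, compute the first variation via the pushforward $\lambda^t$ using the expansion \eqref{eq4.3}, and invoke the known implication that a critical measure has the $S$-property --- is exactly the paper's strategy, and your handling of the diagonal singularity (boundedness of the difference quotient of a $C^1$ function) and of the fixed set is sound. The gap is in the step you call the ``pinch.'' The two inequalities you combine, $\mc{E}_\varphi[S^t]\le\mc{E}_\varphi(\lambda^t)$ (the pushforward is merely a competitor on $S^t$) and $\mc{E}_\varphi[S^t]\le\mc{E}_\varphi[S]$ (extremality), are both \emph{upper} bounds on $\mc{E}_\varphi[S^t]$; they do not trap the difference quotient of $t\mapsto\mc{E}_\varphi[S^t]$ between two quantities with a common limit, and they do not force $D_h\mc{E}_\varphi(\lambda)=0$. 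If $d=D_h\mc{E}_\varphi(\lambda)<0$, say, then the profile $\mc{E}_\varphi[S^t]=\mc{E}_\varphi[S]+\min(0,td)$ is compatible with both of your inequalities for $t$ of either sign, yet $d\ne 0$. The point is that an upper bound on $\mc{E}_\varphi[S^t]$ can never contradict the maximality of $S$; to get a contradiction from $d\ne 0$ you must exhibit a compact whose \emph{equilibrium} energy strictly exceeds $\mc{E}_\varphi[S]$, and that requires a lower bound on $\mc{E}_\varphi[S^t]$, i.e.\ control over \emph{all} measures on $S^t$, not just the pushforward of $\lambda$.

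The missing ingredient is precisely the paper's identity \eqref{eq9.27}: the first variation of the equilibrium energy of the compact coincides with the first variation of the energy of its equilibrium measure. The lower bound is obtained by taking the equilibrium measure $\mu_t$ of $S^t$ and pulling it back to $S$ by the inverse map $z\mapsto z-th(z)+o(t)$, which gives $\mc{E}_\varphi[S]\le\mc{E}_\varphi\(\mu_t^{-t}\)=\mc{E}_\varphi[S^t]-t\,d(t)+O(t^2)$ with $d(t)=D_h\mc{E}_\varphi(\mu_t)$. Combined with your upper bound this yields $t\,d(t)+O(t^2)\le\mc{E}_\varphi[S^t]-\mc{E}_\varphi[S]\le t\,d+O(t^2)$, from which one first deduces that $\mu_t\to\lambda$ in the energy metric, hence $d(t)\to d$, and only then that the one-sided derivative exists and equals $d$. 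With that in hand, applying the interior-maximum argument to $h$ and to $-h$ gives $d=0$, and the remainder of your argument (passing from $D_h\mc{E}_\varphi(\lambda)=0$ for all admissible $h$ to the $S$-property via \eqref{eq4.4}--\eqref{eq4.5}) goes through as you describe.
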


The fact that total potential of a critical measure satisfies $S$-property is known. We go into some details in the next section. Thus, lemma \ref{lem3.5} concludes the proof of existence theorem.

\section{Critical measures and equilibrium measures of $S$-curves }\label{sec4}

\subsection{Main definitions}\label{subsec4.1}

Let   $\Omega\subset\mbb{C}$ be a domain,  $A\subset\Omega$ be a finite set and  $\varphi$ be a harmonic function in $\Omega\sim A$. For an  $A$-variation $z\to z^t=z+th(z)$ in $\Omega\sim A$ we define the associated variation of weighted energy a measure $\mu$ in $\Omega$ by
\begin{equation}
\label{eq4.1}
D_h\mc{E}(\mu)=\lim_{t\to 0+}\frac{1}{t}\(\mc{E}_\varphi\(\mu^t\)-\mc{E}_\varphi(\mu)\).
\end{equation}

We say that $\mu$ is $(A,\varphi)$-crucial if for any $A$-variation such that the limit above exists we have
\begin{equation}
\label{eq4.2}
D_h\mc{E}(\mu)=0
\end{equation}

There are two important facts about critical measures.  First, the equilibrium measure of an $S$-curve is a critical measure. Second, the potential of a critical measure $\lambda$ has $S$-property \eqref{eq1.6}. Besides, critical measures are often more convenient to deal with then $S$-curves.
 
Critical (stationary) measures were first introduced in \cite{GoRa87} and then used in \cite{PeRa94} and later in\cite{KaRa05} in combination with min-max method. A systematic study of critical measures for rational field was carried out in \cite{MaRa11}. The last paper contains all the facts related to critical measures which we need here (a few cosmetic changes required). We single out some important formulas leading to $S$-property of a critical measure. First, we have the following explicit representation \cite{MaRa11}.
\begin{equation}\label{eq4.3} 
D_h\mc{E}(\mu)=\ \Re\bigg(\iint\frac{h(x)-h(y)}{x-y}d\mu(x)d\mu(y)-2\int\Phi'(x)h(x)d\mu(x)\bigg)
\end{equation}
where $\varphi(z)=\Re\Phi(z)$; we have $\Phi'\in H(\Omega\sim{A})$. 

Next, $\lim$ in \eqref{eq4.1} exists iff $\Phi'h\in L_1(\mu)$. For technical reasons it make sense to replace the last condition with a more restrictive one $h(z)=0$ for $z\in(A)_\delta$ with some $\delta>0$ and it is convenient to include it in the definition of critical measure (see Remark \ref{rem4.1}).
Next, for any $({A},\varphi)$ critical measure $\mu$ we have
\begin{equation}\label{eq4.4}
R(z)=\(\int\frac{d\mu(t)}{t-z}+\Phi'(z)\)^2\in H(\Omega\sim{A})
\end{equation}
(lemmas 2 and 3 in \cite{MaRa11}). In turn, this assertion implies the following description of $\Gamma=\supp\mu$: $\Gamma$ consists of a finite number of critical or closed trajectories of quadratic differential $-R(dz)^2$. Moreover, the last differential is closed (more exactly, it is quadratic differential with closed trajectories \cite{St84}), \cite{MaRa11} Theorem 5.1. Together with \eqref{eq4.4} this  yields representation
\begin{equation}\label{eq4.5}
V^\mu(z)+\varphi(z)=-Re\int^z_a\sqrt{R(t)}\,dt,\quad z\in\mbb{C}\sim\Gamma
\end{equation}
The $S$-property of $\Gamma$  together with the formula $d\mu(z)=\frac1{\pi}|\sqrt{R}\,dz|$ on open arc of $\Gamma$ follow directly from \eqref{eq4.5}

\begin{remark}\label{rem4.1}
Actually, to derive properties \eqref{eq4.4} and \eqref{eq4.5} of a critical measure one needs to verify \eqref{eq4.2} for some particular class of functions $h$ . More exactly, it was shown in \cite{MaRa11}  that it is enough to verify \eqref{eq4.2} for modified Shiffer's variations $h(z)=\theta(z){A(z)}/{\(z-\zeta\)}$ where $A$  is the polynomial whose roots are fixed points, $\zeta$ is a complex parameter and  $\theta(z)$ is a real function which is equal to zero in a neighbourhood of points from $\mc A$ and it is equals to unity outside of a slightly larger neighborhoods. 
We do not have to go into such details; instead  we  define critical measures as a measures $\mu$ with \eqref{eq4.2} valid for any smooth $h(z)$ 
satisfying the condition $h(z)=0$ for $z\in( A)_\delta$ with some $\delta>0$. This class is larger but easier to describe. 

 
\end{remark}

We note that class of (unit positive) critical measure is larger than class of equilibrium measure of $S$-compact (for the same external field and same fixed set). Indeed, for the equilibrium measure $\lambda=\lambda_{\varphi,S}$ we have relations \eqref{eq1.5} and \eqref{eq1.6}; in particular, $V^\lambda+\varphi$ is a constant on the $\supp\lambda$ (besides, there is a part $S\sim\supp\lambda$ of $S$ which has in general certain degree of freedom).
Thus, we have two closely relate notions
\begin{enumerate}
\item[(1)]  $S$-curve in a (homotopic) class $\mc T$ with fixed set $A$ in the external field $\varphi$

\item[(2)]  $(A,\varphi)$-critical measure $\mu$
\end{enumerate}
Next we make some more comments in this direction for the (classical) case $\varphi\equiv 0$; see \cite{MaRa11} for details.

\subsection{$A$-critical measures and $S$-curves for $\varphi\equiv 0$ }\label{subsec4.2}
We compare critical measures and equilibrium measures of $S$-curves without external field and with the same fixed set $A=\left\{a_1,\dotsc,a_p\right\}$.
Let classes of curves be introduced as $\mc{T}_f$ associated with various functions  $f\in \mc A (\ol{\mbb{C}}\sim A)$ . For any (unit positive) $A$-critical measure there exists a polynomial 
$\  V(z) = \prod_{j=1}^{p-2} (z-v_j)$ such that with $\ A(z) = \prod_{k=1}^{p} (z-a_k)$  (here we denote polynomial and its set of zeros by same symbol) we have
\begin{equation}
\label{eq4.6}
V^\mu(z)=\Re\int_{a_1}^{z}
\sqrt{{V(t)}/{A(t)}}\,dt,\quad
d\mu(z)=\frac{1}{\pi}\left|\sqrt{{V}/{A}}\,dz\right|.
\end{equation}
(in particular, $\supp\mu$ is a union of critical trajectories of $\({V(z)}/{A(z)}\)(dz)^2$). The same is true for  the equilibium measures of $S$-curves in class  $\mc{T}_f$. Thus, both sets of mesures may be characterized in terms of associated polynomials  $V$. Now, using zeros $v_j$ of $V$ as parameters,  we descibe set of A-crtical measures (\cite{MaRa11}, sec. 9).

 Space of vectors $\mc{V}_A=\{ v = \(v_1,\dotsc,v_{p-2}\)\}$ corresponding to critical measures has real dimension $p-2$; more exactly, it is a union of $3^{p-2}$ bordered (bounded) domains (cells) on a manyfold of real dimention $p-2$ . Interior points of each cell correspond to measures $\mu$ with $\Gamma=\supp\mu$ consisting of exactly $p-1$ simple disjoint analytic arcs $\Gamma_j$ with endpoints from  $\{a_k,\  v_j\}$. As parts of ${\mbb C}^{p-2}$ cells (corresponding $v\in{\mbb C}^{p-2} $) are defined by systems of eqations
\begin{equation}
\label{eq4.7}
\Re\int_{\Gamma_j}\sqrt{{V(t)}/{A(t)}}\,dt=0,\quad j=1,\dotsc,p-2
\end{equation}

On the other hand 
there is only a finite number of  $S$-curves  associated with a given fixed set $A$ and any of them may be obtained as $S$-curves associated with a class $\mc{T}_f$ of admissible cut for a properly selected functions $f\in\mc{A}(\ol{\mbb{C}}\sim A)$. Equilibrium measures of $S$-curves are among ${A}$-critical measures and they are located on boundaries of cells. In particular, $v^0$ corresponding to the Chebotarev's continuum $\mc T=\mc T^0_A$ for $A$ belongs to the boundary of each cell (and may be used for a more explicit description of $\mc V_A$). Further, equilibrium measures of $S$-curves satisfy \eqref{eq4.7} and also $p-2$ additional equations which distinguish them among critical measures. These additional equations may be presented as follows. For any critical measure $\mu$ with disjoint arcs $\Gamma_j$ of support of $\mu$ we have
$$
V^\mu(z)=C_j,\quad z\in\Gamma_j \quad  (\supp\mu=\bigcup\limits_{j=1}^{p-1}\Gamma_j )
$$
 Additional equations for an equilibrium measure of an $S$ compact are 
$$C_1=\dotsb=C_{p-1}$$ 
These equations may also be written in the form \eqref{eq4.7} so,  all of them are equtions on real parts of periods of quadratic differential $V/A(dz^2)$.
We have totally $2p-2$ real equations for the same number of real parameters in $V$. This system of equations defining equilibrium measures of $S$-curves has rather complicated analytic structure. Critical measures may be useful in this context since they consitute a connected space.

\subsection{Heine--Stieltjes and generalized Jacobi polynomials}\label{subsec4.3}

Critical measures were studied in \cite{MaRa11} with the purpose of characterization of zero distributions of Heine--Stieltjes polynomial. For a fixed $A(z)=\prod\limits_{k=1}^{P}(z-a_k)$ with distinct $a_k$ and $B(z)={\alpha z}^{p-1}+\dotsm\in\mbb{P}_{p-1}$ and a fixed $n\in N$ there exist $\delta_n= \dbinom{n+p-1}{n}$ polynomials $V_n(z)\in\mbb{P}_{p-2}$ (Van Vleck polynomials) such that differential equation
\begin{equation}\label{eq4.8}
A(z)y''(z)+B(z)y'(z)-n(n+\alpha-1)V_n(z)y(z)=0
\end{equation}
has a polynomial solution $y(z)=Q_n(z)=z^n\dots$ of degree $n$ (Heine--Stieltjes polynomials). One of the main results in \cite{MaRa11} is the following. If we have a convergent sequence of Van Vleck polynomials $V_n(z)\to V(z)=z^{p-2}+\dotsb$ then for corresponding Heine--Stieltjes polynomials we have
$$
\frac1n\,\mc{X}\(Q_n\)\ \overset{*}{\to}\ \mu
$$
where $\mu$ is an $A$-critical measure. Moreover any $A$-critical measure may be obtained this way.  In electrostatic terms $\frac1n\,\mc{X}\(Q_n\)$ is a discrete critical measure and the result simply means that a weak limit of a sequence of discrete critical measures is a (continuous) critical measure.

It is interesting to compare Heine-Stieltjes polynomials with Generalized Jacobi polynomials --  denominators of diagonal Pade approximants at infinity  for functions
$$
f(z)=\exp\(\frac12\int^z\frac{B(t)}{A(t)}dt\)
=\prod_{k=1}^{p}\(z-a_k\)^{\alpha_k}
$$
(assume that $\alpha_1+ \dots + \alpha_k=0$; then $f$ is analytic at $\infty$). Let $P_n(z)$ be the corresponding Pad\'e denominator. It is a classical fact that for each $n\in\mbb{N}$ there exists a polynomial $h_n(z)=z^{p'-2}+\dotsb\in\mbb{P}_{p'-2}$ where $p'\le p$ such that $y=P_n(z)$ is a solution of thed Laguerre equation
\begin{equation}\label{eq4.9}
Ah_n y''+\(A'h_n-Ah'_n+Bh_n\)y'-n(n+1)C_ny=0
\end{equation}
where $C_n(z)=z^{2p'-4}+\dots\in\mbb P_{2p'-4}$. Comparison of \eqref{eq4.8} and \eqref{eq4.9} shows that $P_n(z)$ are also Heine--Stieltjes polynomials with redefined  $A$ and $B$. 

Note that by Stahl's theorem we have $\frac1n\,\mc{X}\(P_n\)\overset{*}{\to}\lambda$ where $\lambda$ is the Robin measure for $S$-compact $S\in\mc{T}_f$. Thus, Heine--Stieltjes and generalized Jacobi polynomials present discrete versions of critical measures and equilibrium measures of $S$-curves respectively.

Strong asymptotics for GJ polynomials $P_n$ based on the equation \eqref{eq4.9} were obtained by J. Nuttall \cite{Nu86} for $p=3$; recently the result was extended for arbitrary $p$ \cite{MaRaSu11}.. Strong asymptotics for HS polynomials $Q_n$ based  on the equation \eqref{eq4.9} were obtained in  \cite{MaRa10}.  Related formulas may be interpreted as solution of an $S$-problem; we will go into some details in Section \ref{sec8} below.

\section{Rational external fields}\label{sec5}
In section \ref{subsec4.2} above we discussed  equilibrium measures of $S$-cueves and critical measures associated with a fixed set $A=\left\{a_1,\dotsc,a_p\right\}$ in a classical nonweighted case. Now we introduce an important class of rational external fields and single out a few corollaries of theorem \ref{thm3.1}.

\subsection{Field of a system of fixed charges in plane}\label{subsec5.1}

 We place a (real) charge $\alpha_k$ at the fixed point $a_k$. This system of cahrges generate the external field $\varphi(z)=\sum\limits_{k=1}^{p}\alpha_k\log(1/{\left|z-a_k\right|})$. Then corresponding $\Phi'=-\sum\frac{\alpha_k}{z-a_k}$ is rational function (so that we consider the case as rational).

Let $\mc{T}_f$  be class of curves associated with a function $f\in\mc{A}\(\ol{\mbb{C}}\sim A\)$ (recall that in such situations we admit that $ \Gamma\in\mc{T}$ may bypass $a_k$ with negative $\alpha_k$).  Theorem \ref{thm3.1} implies that there is a compact $S_f\in\mc{T}_f$ with $S$-property. Subsequently it follows by \eqref{eq4.4} that corresponding $R$ is a rational function with second order poles at points from $A$ and seconds order zero at $\infty$. So, there is a polynomial $V(z)=\prod\limits_{k=1}^{2p-2}\(z-v_k\)$ such that for the total potential of equilibrium measure $\lambda$ we have
\begin{equation}
\label{eq5.1}
\(V^\lambda+\varphi\)(z)=\Re\int_{a_1}^{z}\frac{\sqrt{V(t)}}{A(t)}\,dt;
\quad d\lambda(z)=\frac1{\pi}\left|\frac{\sqrt{V(z)}}{A(z)}\,dz\right|
\end{equation}
The same is true for any $(A,\varphi)$ critical measure $\mu$. Thus, both families of all (unit, positive) critical measures and equilibrium distributions of $S$-compacts in the field $\varphi$ are described as subsets of space $\mbb C^{2p-2}= \{ v=\(v_1,\dotsc,v_{2p-2}\)\}$  just as for the case $\varphi\equiv 0$ in Section \ref{sec4} above.

Furthermore, similar to what we have in case $\varphi\equiv 0$, the set of $(A,\varphi)$-critical measure is a union of cells --- bordered manyfolds $M^{p-2}_j$ of real dimention $p-2$ which may be locally represented using ${v}\in\mbb{C}^{2p-2}$  as local parameters by a system of equations including $p-2$ real equations
\begin{equation}
\label{eq5.2}
\Re\int_{\Gamma_j}{\sqrt{V(t)}}/{A(t)}\,dt=0,\quad j=1,\dotsc,p-2
\end{equation}
and $p$ complex equation
\begin{equation}
\label{eq5.3}
{\sqrt{V(a_j)}}/{A'(a_j)}=\alpha_j,\quad j=1,\dotsc,p
\end{equation}
(\eqref{eq5.3} prescribes residues of ${\sqrt{V}}/{A}$ at poles).
We formally state what we said above as theorem \ref{thm5.1} below (joint result with A. Mart{\'\i}nez-Finkelshtein).

\begin{theorem}\label{thm5.1}
Let $\varphi(z)=\sum\limits_{k=1}^{p}\alpha_k\log(1/{\left|z-a_k\right|})$, $\alpha_k\in\mbb{R}$. Then set of $(A,\varphi)$-critical measures is a union of finite number of cells - bordered manyfolds of real dimension $p-2$. Interior points of each cell correspond to measures $\mu$ whose support is a union of $p-1$ disjoint analytic arcs $\Gamma_j$  Endpoints of those arcs belong to $\left\{a_1,\dotsc,a_p, v_1,\dotsc,v_{2p-2}\)$ and satisfy equations \eqref{eq5.2} and \eqref{eq5.3}.
\end{theorem}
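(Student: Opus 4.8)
The plan is to deduce the statement almost entirely from the already-established Theorem \ref{thm3.1} together with the structural facts about critical measures recorded in Section \ref{sec4} (especially \eqref{eq4.4}--\eqref{eq4.5} and the cell description \eqref{eq4.6}--\eqref{eq4.7}), adapting them from $\varphi\equiv 0$ to the present rational field $\varphi(z)=\sum_{k=1}^{p}\alpha_k\log(1/|z-a_k|)$. First I would fix the complex primitive $\Phi(z)=-\sum_k\alpha_k\log(z-a_k)$, so that $\Phi'(z)=-\sum_k\alpha_k/(z-a_k)$ is rational with simple poles exactly at $A$ and a zero of order at least two at $\infty$ (since $\sum\alpha_k$ contributes only a $1/z$ term that is squared away, and in fact $(\Phi')^2$ has a double zero at $\infty$). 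Then for any $(A,\varphi)$-critical measure $\mu$, formula \eqref{eq4.4} gives that
\begin{equation*}
R(z)=\Big(\int\frac{d\mu(t)}{t-z}+\Phi'(z)\Big)^2\in H(\ol{\mbb C}\sim A),
\end{equation*}
and I would check that $R$ is in fact a rational function: away from $A$ and $\supp\mu$ it is the square of a function holomorphic off $\supp\mu$; since $\mu$ is supported on analytic arcs across which the Cauchy transform jumps only by a sign (standard for quadratic-differential supports), $R$ extends analytically through $\supp\mu$, and at $\infty$ the Cauchy transform is $O(1/z)$ while $\Phi'$ is $O(1/z)$, so $R$ has a double zero at $\infty$. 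At each $a_k$, $R$ has at worst a double pole coming from $(\Phi')^2$. Hence $R(z)=V(z)/A(z)^2$ with $A(z)=\prod_{k=1}^p(z-a_k)$ and $V$ a polynomial; counting zeros and poles on $\ol{\mbb C}$ forces $\deg V=2p-2$, i.e. $V(z)=\prod_{k=1}^{2p-2}(z-v_k)$. Integrating $\sqrt{R}$ and using \eqref{eq4.5} yields \eqref{eq5.1}.

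Next I would extract the two families of equations. The residue of $\sqrt{R}=\sqrt{V}/A$ at $a_j$ must reproduce the residue $-\alpha_j$ of $\Phi'$ (the Cauchy transform of $\mu$ being analytic at $a_j\notin\supp\mu$), which after fixing the branch gives exactly $\sqrt{V(a_j)}/A'(a_j)=\alpha_j$, $j=1,\dots,p$; this is \eqref{eq5.3} and accounts for $p$ complex conditions. The condition that the quadratic differential $-R\,(dz)^2$ have closed trajectories and that $V^\mu+\varphi$ be real-valued and single-valued forces the periods $\Re\int_{\Gamma_j}\sqrt{V(t)}/A(t)\,dt$ to vanish over the $p-2$ independent cycles/arcs $\Gamma_j$; this is \eqref{eq5.2}. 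I would then count: $V$ has $2p-2$ complex, i.e. $4p-4$ real, parameters; the $p$ complex equations \eqref{eq5.3} remove $2p$; the $p-2$ real equations \eqref{eq5.2} remove $p-2$; leaving $(4p-4)-2p-(p-2)=p-2$ real free parameters, which is the asserted dimension of a cell. For the cell/manifold structure and the count "$p-1$ disjoint analytic arcs with endpoints in $\{a_k\}\cup\{v_k\}$" I would invoke the topological analysis of trajectory structure of quadratic differentials with closed trajectories exactly as in \cite{MaRa11}, Section 9: generic $V$ (interior of a cell) gives simple zeros $v_j$, each a trivalent or endpoint singularity, and the support is a forest of analytic arcs joining these critical points; the Euler-characteristic / trajectory-count argument gives $p-1$ arcs. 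Finally, Theorem \ref{thm3.1} guarantees that at least one such critical measure (the equilibrium measure of the extremal $S$-compact) exists, so the union of cells is nonempty.

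The main obstacle, I expect, is not any single hard estimate but the bookkeeping at the points of $A$ and at $\infty$: one must be careful that when some $\alpha_k<0$ (so admissible curves may bypass $a_k$) the point $a_k$ is genuinely not in $\supp\mu$, that the chosen single-valued branch of $\sqrt{V}/A$ makes \eqref{eq5.3} consistent (the signs of the square roots at the $p$ points $a_j$ are not independent — they are constrained by the global branch, and this is what ties \eqref{eq5.3} to a lower-dimensional locus rather than generic point constraints), and that the degree count of $V$ is exactly $2p-2$ rather than smaller in degenerate configurations (which would land on cell boundaries). A secondary technical point is justifying that $R$ is rational — i.e. that the only singularities of the Cauchy transform of a critical measure are the arc-endpoints and that it is genuinely algebraic of the stated type; this is precisely lemmas 2 and 3 of \cite{MaRa11} adapted to nonzero rational $\varphi$, and I would cite them, noting the "few cosmetic changes" already flagged in Section \ref{subsec4.1}. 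Once these points are pinned down, the theorem follows by assembling \eqref{eq4.4}, \eqref{eq4.5}, the residue computation, and the trajectory-structure dictionary, with existence supplied by Theorem \ref{thm3.1}.
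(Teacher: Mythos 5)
Your proposal follows essentially the same route as the paper's own (sketched) argument: existence via Theorem~\ref{thm3.1} (the weighted Chebotarev continuum), rationality of $R=V/A^2$ with $\deg V=2p-2$ via \eqref{eq4.4}, the residue conditions \eqref{eq5.3} and period conditions \eqref{eq5.2}, and the cell/trajectory structure imported from \cite{MaRa11}, Section~9. The paper only outlines these steps, and your added bookkeeping (the behaviour at $\infty$, the $4p-4-2p-(p-2)=p-2$ dimension count, and the branch-consistency caveat for \eqref{eq5.3}) fills in exactly what the paper leaves implicit rather than taking a different path.
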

Extra $p-2$ equations which distinguish equilibrium measures of $S$-curves in various classes ${\mc T}_f$ are similar to those in section \ref{sec4}. The key point in the proof of the theorem is existence of weighted Chebotarev's continuum associated with the $\varphi$ which is the compact $S$ with $S$-property is class $\mc{T}^0$ of continua containing $A$ in the external field $\varphi$ (see theorem \ref{thm3.1}).

As a corollary of the theorem \ref{thm5.1} one can obtain extentions of the theorem on zero distribution of Heine-Stieltjes polynomials \eqref{eq4.8} from \cite{MaRa11} discuused in \ref{subsec4.3} to a larger class with $B=B_n$ depending on $n$ in such a way that there exist a limit $\underset{n\to\infty}{\lim}\frac{1}{n}B_n(z)$. Such polynomials are interesting in the connection with so called Gaudin's model (see \cite{MutaVa08}; see also \cite{MaSa02} for an earlier "real" version of the theorem). Subsequently, methods of \cite{MaRa10} may be used to obtain strong asymptotics for those polynomials.

\subsection{Multi-point Pad\'e approximants}\label{subsec5.2}

As another application of theorem \ref{thm3.1}, we consider a problem of interpolation of several (generally different) analytic elements by a single rational function with free poles. Interpolation points are defined by the set $A=\left\{a_1,\dotsc,a_p\right\}\subset\mbb{C}$, then for each $k=1,\dots,p$  we introduce a finite set sets of distinct points $A_k\subset\ol{\mbb{C}}$ (branch points of functons) ; sets  $A_k$ may intersect. Next, let  $f_k$ be a function elements at $a_k$ with branch points from $A_k$
$$
f_k(z)=\sum_{n=0}^\infty c_{k,n}\(z-a_k\)^n\in\mc{A}\(\ol{\mbb{C}}\sim A_k\),\quad k=1,\dotsc,p$$
Finally, let $m_{k,n}\in\mbb{Z}_+$, $k=1,\dotsc,p$, $n\in\mbb{N}$ be nonnegative integers satisfying conditions
$$
\sum_{k=1}^pm_{k,n}=2n+1,\quad
\lim_{n\to\infty}\frac{m_{k,n}}n=m_k,
\quad k=1,\dotsc,p
$$
(so that $m_1+\dotsb+m_p=2$). Then we can find a rational function $\pi_n=P_n/Q_n$ of order $\le n$ interpolating $f_k$ at $a_k$ with multiplicity $m_{k,n}$. Polynomials $P_n, Q_n$ are defined by conditions
$$
\(Q_nf_k-P_n\)(z)=O\(\(z-a_k\)^{m_{k,n}}\)\text{ as }z\to a_k
$$
(such polynomials $P_n,Q_n\in\mbb{P}_n$ exist and the ratio $P_n/Q_n$ is unique).

The problem of convergence and the sequence $\left\{\pi_n\right\}$ reduces to the problem of zero distribution for polynomials ${Q}_n$. Those polynomials may be defined by orthogonality conditions with weights including varying parts  $1/\Omega_n(z)$ where $\Omega_n(z)=\prod\limits_{k=1}^n\(z-a_k\)^{m_{k,n}}$.

Investigation of the zero distribution of $Q_n$ requires (but is not completely reduced to) the existence of an $S$-curve in the class $\mc{T}$ of curves $\Gamma_0\cup\Gamma_1\cup\dotsb\cup\Gamma_p\subset\ol{\mbb{C}}\sim A$ with the following properties:
\begin{gather*}
\ol{\mbb{C}}\sim\Gamma_0=\Omega_1\cup\Omega_2\cup\dotsb\cup\Omega_p;
\quad a_k\in\Omega_k;\quad\Omega_i\cap\Omega_j=\emptyset; \\
\Gamma_k\subset\Omega_k,\quad f_k\in H\(\Omega_k\sim\Gamma_k\)
\end{gather*}
and the additional condition that the jump of $f$ over any analytic arc in $\Omega$ is $\not\equiv 0$.

Now, there are certain exceptional situations when conditions of theorem \ref{thm3.1} are not satisfied and the required $S$-curve does not exists. For instance it happen when we interpolate two differents constants at two different points (see example in section \ref{subsec3.3}). We do not go into furher details; normally conditions of the theorem \ref{thm3.1} are satisfied, $S$-curve exists and may also be defined by the extremal property
$ \mc{E}_\varphi[S]=\max_{\Gamma\in\mc{T}}\mc{E}_\varphi[\Gamma] $
with the external field $\varphi(z)=\sum\limits_{k=1}^pm_k\log\left|z-z_k\right|$.

However, in many cases support of the equilibrium meaure related to the problem is disconnected and theorem \ref{thm1.3} is not directly applied. A way around was found in  \cite{BuMaSu}  for a particular case of two-point approximation  for two elements with two branch points each. We note also that  if the poles of the interpolating function $\mc{F}_n$ (zeros of $Q_n$) are partially fixed then we come to a problem more general then the one above; external field has an additional positive charge which makes situation a little more complicated. 

\subsection{Polynomial external field}\label{subsec5.3}

Let $p(z)=cz^m+\dotsb$ be a polynomial of degree $m\ge 2$. Let $\mc{T}$ be a class of curves $\Gamma=\Gamma_1\cup\Gamma_2\cup\dotsb\cup\Gamma_p$ where $\Gamma_j$ is a Jordan arc in $\ol{\mbb{C}}$ which goes from $\infty$ to $\infty$. More exactly, each $\Gamma_j$ begins and ends in different sectors where $\varphi(z)=\Re p(z)\to\infty$ in such a way that the integrals in
$$
\int_\Gamma Q_n(z)z^ke^{-2np(z)}dz=0,\quad k=0,1,\dotsc,n-1
$$
exist (and not trivially vanish for any polynomials $\mc{Q}_n$.) Then the relations above define a sequence of polynomials $Q_n(z)=z^n+\dotsb$. Then, by Theorem \ref{thm1.3}, we have $\frac1n\,\chi\(Q_n\)\to\lambda$, where $\lambda$ is the equilibrium measure of the $S$-curve in $\varphi$ with $S\in\mc{T}$ if such a curve exists.

Conditions of theorem \ref{thm3.1} are not satisfied; indeed, set of singularities (which is also a fixed set) consists of a single point $\infty$ but class  $\mc{T}$ is not closed (for instance, $\Gamma=\{\infty\}\not\in S$ is a limit point of $\mc{T}$ in the spherical $\delta_H$ metric). However, it is not difficult to show that for any $\Gamma^{(n)}\to\Gamma\not\in\mc{T}$ we have $\mc{E}_\varphi\(\Gamma^{(n)}\)\to-\infty$ and, therefore, maximizing $\Gamma^{(n)}$ has a limit point in $\Gamma$. By Theorem \ref{thm3.1}, this limit point has the $S$-property.

An interesting problem is to characterize polynomials $R(z)$ which may be obtained this way and, thus, characterize corresponding $S$-curves. Some progress  in this direction was made in \cite{Be08}. One parametric family $p(z)=tz^2+z^4$ is considered in \cite{BeTo11}. The work \cite{MaOrRa} is in progress where equilibrium measures on $\mbb{R}$ in the field of $\varphi=\Re p$, $p\in\mbb{P}_4$ are considered. The problem is now far from being completely solved.

We return to the existence problem for external field $\varphi=\Re p$: There is another way to arrange the reference to Theorem \ref{thm3.1} which may have an independent interest. In short, we can cut off curves $\Gamma=\cup\Gamma_j\in\mc{T}$ taking intersection with a disc of large radius and subsequently take the limit as radius tends to infinity. More exactly, each $\Gamma_j$ is defined by two different sectors of plane. Let $\theta_1,\theta_2$ be arguments of middle lines of those sectors. Define $a_{j,n}=ne^{i\theta_1}$, $b_{j,n}=ne^{i\theta_2}$: and let $\Gamma_{j,n}$ be any curve in plane connecting $a_{j,n}$ and $b_{j,n}$. Let $\Gamma_n$ be class of curves $\Gamma_n=\cup\Gamma_{j,n}$; this class satisfies condition of Theorem \ref{thm3.1} and there exists an $S$-compact $S_n\in\mc{T}_n$. For its equilibrium measure $\lambda_n$ we have
$$
R_n(z)=\(\int\frac{d\lambda_n(t)}{t-z}+p'(z)\)^2=\frac{q_n(z)}{s_n(z)}
$$
where $s_n(z)=\sum\limits_j\(z-a_{j,n}\)\(z-b_{j,n}\)$ and $q(z)\in\mbb{P}_{m'}$ where $m'=2m-2+2p$. We note that $S$-curves and measure $\lambda_n$ associated with the situation have independent interest if endpoints are prescribed in advance. In the original context, as those points tends to $\infty$ the function $R_n$ will become a polynomial as $n\ge N$ ($q_n$ is divisible by $S_n$) and corresponding $S_n$ maybe completed arcs going to $\infty$ to a curve $S$ of original problem.

The situation discussed above is typical: in many applications conditions of theorem 3.1 are not immediately satisfied, but reduction is eventually possible.

\section{Green's and the vector equilibrium problems}\label{sec6}

Each equilibrium problem for logarithmic potential has an analogue for the Green's potential. Most part of corresponding definitions and assertions  are essentially identical. We make a brief review of them next.

\subsection{Definitions}\label{subsec6.1}
Let $\Omega$ be a domain and $\mu$ be a positive Borel measure in $\Omega$. We denote by
\begin{equation}
\label{eq6.1}
V_\Omega^\mu(z)=\int g(z,t)\,d\mu(t),\quad z\in\Omega
\end{equation}
the Green's potential of $\mu$ ($g(z,\zeta)$ is the Green function for $\Omega$ with a pole at $z=\zeta$). This class of potentials is in many ways simpler than class of logarithmic potential. The function $V=V_\Omega^\mu$ is invariant under conformal mappings of $\Omega$ (it may be defined as solution of Poisson equation $\Delta V=-2\pi\mu$ with boundary values $V=0$ on $\partial\Omega$). We have for any positive $\mu$ in $\Omega$
$$
V^\mu_\Omega(z)\ge 0;\quad
\mc{E}^\Omega(\mu)=\int V^\mu_\Omega d\mu\ge 0.
$$
For a compact $F\subset\Omega$ and an external field $\varphi\in C(F)$ there exist a unique equilibrium measure $\lambda$ on $F$ defined by each of the conditions \eqref{eq1.4} and \eqref{eq1.5} where $\mc{E}_\varphi$ and $V^\lambda$ have to be replaced by $\mc{E}_\varphi^\Omega$ and $V^\lambda_\Omega$.

Next, let a $\varphi$ be a harmonic function in $\Omega\sim e$ ($\cop(e)=0$). We say that $F$ has Green's $S$-property (relative to $\Omega$) and $\varphi$) if \eqref{eq1.6} is valid for $V^{\lambda_F}_\Omega$ in place of $V^\lambda$. Problem of existence of a compact with (Green's) $S$-property we will abbreviate to (Green's) $S$-problem.

As for logarithmic potential, the case $\varphi=0$ is classical. It is customary in this case to set $E =\ol{\mbb{C}}\sim\Omega$ and consider a pair of disjoint compacts (condenser) $(E,F)$ instead of compact $F$ in the domain $\Omega=\ol{\mbb{C}}\sim E$. Capacity $C(E,F)$ of a condenser $(E,F)$ is defined by
$$
\cop\(E,F\)=\frac1{\mc{E}^\Omega(\lambda_F)}
=\frac1{\mc{E}\(\lambda_F-\lambda_E\)}
$$
where $\Omega=\mbb{C}\sim E$, $\lambda_F$-Green's equilibrium measure for $F$ (with $\varphi\equiv 0$); $\lambda_E$ is balayage of $\lambda_F$ onto $\partial\Omega\subset E$. Signed measure $\lambda=\lambda_F-\lambda_E$ is called equilibrium measure of condenser $(E,F)$. We note that $E$ and $F$ are interchangeable in this context, in particular $C(E,F)=C(F,E)$. In the presence of an external field we assume that it is acting on $F$.

We note also that $\lambda=\lambda_F-\lambda_E$ may also be interpreted as a vector measure $\vec{\lambda}=\(\lambda_E,\lambda_F\)$; we will go into some detail later in Section \ref{subsec6.4}.

The equilibrium problems for Green's potential (S-problem, in particular) play, first of all, an important role in the theory of best rational approximations to analytic functions. Next we review briefly classical results by J. Walsh, A. Gonchar and H. Stahl related to the case $\varphi=0$.

\subsection{Gonchar's $\rho^2$-conjecture}\label{subsec6.2}
Let $E\subset\ol{\mbb{C}}$ be continuum and $f=f_E$ be an element of analytic function on $E$. Let $\mbb{R}_n$ be the set of all rational functions $r_n=P_n/Q_n$ of order $\le n$ $\(P_n,Q_n\in\mbb{P}_n\)$ and
$$
\rho_n(f)=\min_{r\in\mbb R_n}\max_{z\in E}|f(z)-r(z)|.
$$
A well-known theorem of the 1930s proposed by J. Walsh asserts that
\begin{equation}
\label{eq6.2}
\varlimsup_{n\to\infty}
\rho_n(f)^{\frac1n}
\le\rho(f)=\inf_{F\in\mc{F}}e^{-1/C(E,F)}.
\end{equation}
Where $\mc{F}$ is the set of all compacts $F\subset\ol{\mbb{C}}\sim E$ such that $f$ has analytic continuation from $E$ to $\Omega=\ol{\mbb{C}}\sim F$ (see \cite{Wa60} for further details.)

In 1978,  A. Gonchar \cite{Go78} (see also \cite{GoLo78}) studied approximations for Markov-type functions $f$ on an interval $E$ of the real axis and observed that it is possible in this case to replace $\rho(f)$  in Walsh's estimate with $\rho(f)^2$. Later, he generalized this result and proved also that in many important cases $\varlimsup$ may be replaced with $\lim$ and $\le$ may be changed to $=$ (see \cite{Go78} for details and further references). Thus, for some important classes of functions equality holds
\begin{equation}
\label{eq6.3}
\lim_{n\to\infty}\rho_n(f)^{\frac1n}=\rho(f)^2
\end{equation}

So, Gonchar made his so-called called $\rho^2$-conjecture: let for some compact $e\subset\ol{\mbb{C}}\sim E$, $\cop(e)=0$ function $f|_E$ has analytic continuation along any path in $\ol{\mbb{C}}\sim e$. Then \eqref{eq6.3} is satisfied.  The upper bound in this conjecture (with $\varlimsup$) was proved in 1986 by Stahl \cite{St86}. The associated low bound and, therefore, existence of $\lim$ was established a year later in \cite{GoRa87}. So, the $\rho^2$-conjecture becomes a theorem.

Proof of the Gonchar--Stahl theorem was based on a general version of multi-point Pad\'e approximants (free poles interpolation). In short it may be presented as follows: Let $\mc{F}_f$ be the class of compact $F\subset\Omega$ with $f\in H\(\ol{\mbb{C}}\sim F\)$. There exists a compact $S\subset\mc{F}_f$ with Green's $S$-property relative to $\Omega$ \cite{St86}; let $\lambda_F$ be its Green's equilibrium measure. Finally, let $\lambda_E$ be the balayage of $\lambda_F$ onto $\partial\Omega\subset E$.

Let $G_n(z)=z^{2n}+\dotsb\in\mbb{P}_{2n}$ be a sequence of polynomials with zeros on $E$ such that $\frac{1}{2n}\mc X(G_n)\overset{*}{\to}\lambda_E$ as $h\to\infty$. Let $\pi_n(z)=P_n/ Q_n\in\mbb{R}_n$ be corresponding sequence of (linear) Pad\'e approximants to $f$ that is $(Q_nf-P_n)/ G_n$ is analytic on $E$.

Then denominators $Q_n$ of approximations $\pi_n$ satisfy orthogonality relations
\begin{equation}
\label{6.4}
\oint_{F}Q_n(z)z^k\frac{f(z)}{G_n(z)}\,dz=0,\quad k=0,1,\dotsc,n-1.
\end{equation}
which is \eqref{eq1.7} with $f_n=f /G_n$ and by Theorem \ref{thm1.3} it follows that $\frac1n\,\mc{X}\(Q_n\)\overset{*}{\to}\lambda_F$. Then \eqref{eq1.9} together with Hermite interpolation formula imply \eqref{eq2.3} (theorem \ref{thm1.3} for this case was actually proved in \cite{St86}).
The key point in the construction of the proof above is investigation of complex orthogonal polynomials $Q_n$ which was possible to carry out using
Theorem \ref{thm1.3} since associated Green's $S$-problem without external field has always a positive solution \cite{St85} (we note that this Greens problem is equivalent to some logarithmic $S$-problem with external field generated by a negative charge.

\subsection{Best rational approximations to the exponential on $\mbb{R}_+$}\label{subsec6.3}
More general theorem on the best rational approximation to a sequence of analytic functions was proved in \cite{GoRa87}. The proof followed the same path as in the proof of Gonchar-Stahl theorem above which leads to the complex orthogonality relations for denominators of related Pad\'e Approximants. The formula of n-th root asymptotics is then needed for these polynomials which requires solution of an $S$-problem with a harmonic external field. 
It was the context in which $S$-problem has been for the first time explicitly introduced; hypotheses of the general theorem included an assumption that related $S$-curve for Green's potential exists in a given class. Then, for an important particular case related to  a solution of the well-known ``$\frac19$-problem''  the associated $S$-problem was costructively solved and a few important remarks were made about the problem in general.

Let $\rho_n=\min\limits_{r\in\mbb{R}_n}\max\limits_{x\in\mbb{R}_+}\left|e^{-x}-r(x)\right|$. The problem of associated rate of convergence  was later called the ``$1/9$-problem'' by E. Saff and R. Varga \cite{SaVa77}, since numerical experiments showed that $\rho_n^{1/n}\approx1/9$ for large enough $n$. Originally, the problem was introduced in the end of 1960's in connection with the numerical solution of the heat-conducting equation and it attracted common attention. It was eventually solved in \cite{GoRa87}, where one can also find a review of the preceding results.

The method described above lead the following equilibrium problem. Let $\Omega=\mbb{C}\sim\mbb{R}_+$, $\mc{F}$ is a class of curves $F$ in $\Omega$ which go from $\infty$ to $\infty$ around $\mbb{R}_+$ in such a way that $\varphi(z)=\Re z\to+\infty$ as $z\to\infty$, $z\in F$ (actually we need $\varphi\to+\infty$ fast enough, a fact which presents some technical problems); instead we may consider curves $F$ from $a-i$ to $a+i$ in $\Omega$ where $a>0$ is large enough. Now the question was if there is a Green's $S$-curve $S\in\mc{T}$ in the field $\varphi$ relative to $\Omega$. 
The positive answer has been obtained in \cite{GoRa87} by construction of $S$-curve and, susequently the rate of convergence of the best approximants has been found in terms of the parameters of its equilibrium distribution. More exactly, let $\lambda=\lambda_S-\lambda_E$ be the equilibrium charge of $(S,E)$ (here $E=\mbb{R}_+$) and $w=\(V^\lambda+\varphi\)(\xi)$, $\xi\in\supp$ be the corresponding Green's equilibrium constant. Then
$$
\lim_{n\to\infty}\rho_n^{1/n}=\rho=e^{-2w}=\frac1{9.2\dotsc}.
$$
Furthermore, near-best approximants may obtain by interpolation of $e^{-2nz}$ with density represented by $\lambda_E$; $\lambda_S$ represents (contracted) zero distribution of denominators $Q_n$.

Strong asymptotics for polynomials $Q_n$ has been later obtained by A. Aptekarev \cite{Ap03} who used steepest descent method for the Matrix Riemann--Hilbert (MRH) problem; as a corollary he proved A. Magnus' conjecture on existence and value of $\lim\limits_{n\to\infty}\rho_n/\rho^n$. Steepest descent as a general mehtod of solving certain class of MRH problems has been developed by P.Deift and X.Zhou  \cite{DeZh93} (see also  the book \cite{De99}). The method is  widely used in various classes of problems;  in particular, it is a powerfull method for proving formulas of strong asymptotics for complex ortnogonal polynomials. In a general context  the method requires existence of an $S$-curve as well as the  potential theoretic method we were discussing above.

 Some progress in the investigation of Green's  $S$-problem has been made in the paper by S. Kamvissis and author \cite{KaRa05} where the $\max$--$\min$ energy method has been outlined in a particular situation related to a NLS (we will go into some details insection \ref{subsec6.6} below). In the next section a general existence theorem for Green's $S$-problem is presented similar to theorem \ref{thm3.1} above. Actually we can carry theorem \ref{thm3.1} over to the Green's case without essential modifications. Some of the required changes in assertions and proofs are briefly discussed next.

\subsection{An existence theorem for Green's $S$-curves}\label{subsec6.4}
We preserve the basic hypotheses of Theorem \ref{thm3.1}. That is, we assume that the external field has finite number of singular point, the class of compacts has a finite number of fixed points (now we combine the two sets and use a single set $A$) and number of components in each compact is bounded by a common constant.

A new detail in Green's case is the presence of the boundary $\partial\Omega$ of domain $\Omega$ whose Green function defines potentials. We will assume that $\partial\Omega$ has finite number of connected components. Since the whole problem is conformal invariant it may be reduced to the case of a circular domain ($\partial\Omega$ is a union of disjoint circles.) 

Next,  we assume that there is a larger domain $\Omega_0\supset\ol{\Omega}$ such that $\varphi$ is harmonic in $\Omega_0$ except for a finite set of points (the condition may, of course, be relaxed, but we do not go here into further discussion).
Finally, conditions (iia) and (iib) cannot be both satisfied for a family of compacts in a domain with large boundary. So, we will replace (iib) with a condition on one-sided variations. This will make it possible to prove the next theorem by essentially the same way we prove Theorem \ref{thm3.1} in Section \ref{sec9} below.
\begin{theorem}\label{thm6.1}
Let $\Omega$ be a circular domain, $\Omega_0\supset\ol{\Omega}$. Suppose that the external field $\varphi$ and class of curves $F\subset\ol{\Omega}$ satisfy the following conditions:
\begin{enumerate}
\item[(i)]  $\varphi$ is harmonic in $\Omega_0\sim{A}$ where ${A}\subset{\Omega_0}$ is a finite set

\item[(ii)]
\begin{enumerate}
\item[(a)]  $\mc{F}$ is closed in a $\delta_{H}$-metric

\item[(b)]  $F\in\mc{F}$ implies $F^t\in\mc{F}$ for small enough $t$ for any $A$-variation if $F^t\subset\ol{\Omega}$

\item[(c)]  For some $s>0$ we have $s(F)\le{s}$ for any $F\in\mc{F}$;
\end{enumerate}

\item[(iii)]
\begin{enumerate}
\item[(a)]  There exist $F\in\mc{F}$ with\newline
$\mc{E}_{\varphi}^\Omega[F]=\inf\limits_{\mu\in\mc{M}(F)}E_{\varphi}(\mu)>-\infty$;

\item[(b)]  $\mc{E}_{\varphi}^\Omega\{F\}=\sup\limits_{F\in\mc{F}}E_{\varphi}[F]<+\infty$.
\end{enumerate}

\item[(iv)]  For any sequence $F_n\in\mc{F}$ that converges in $\delta_H$ metric \tu{(}$\delta_H(F_n,F)\to 0$\tu{)} there exists a disc $D$ such that $F_n\sim D\in\mc{F}$
\end{enumerate}

Then there exists a compact $S\subset\mc{F}$ with the $S$-property.
\end{theorem}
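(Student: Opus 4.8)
The plan is to carry the $\max$--$\min$ energy scheme of Theorem~\ref{thm3.1} over to the Green's setting, following the three components indicated in Section~\ref{subsec3.4} with only the modifications forced by the boundary $\partial\Omega$. The main body of work is the Green's analogue of Theorem~\ref{thm3.2}: the functional $F\mapsto\mc{E}_\varphi^\Omega[F]=\inf_{\mu\in\mc{M}(F)}\mc{E}_\varphi^\Omega(\mu)$ is upper semi-continuous on $\mc F$ in the $\delta_H$-metric. As in Section~\ref{sec9}, I would first enlarge $\mc F$ to a class $\mc K$ of compacts in $\ol\Omega$ with at most $s$ components, take $\varphi$ merely continuous (not yet harmonic), and prove that $F\mapsto\mc{E}_\varphi^\Omega(\lambda_F)$ is continuous there; the point is that the Green kernel $g(z,t)$ of the (circular) domain $\Omega$ is continuous off the diagonal, bounded below by $0$, and $-\infty$-valued on the diagonal in the same way $\log\frac1{|z-t|}$ is, so the weak-$*$ compactness argument, the principle of descent, and the lower-envelope estimates transcribe essentially verbatim. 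Then a harmonic field $\varphi$ with singularities at the finite set $A$ (together with the hypothesis $\Omega_0\supset\ol\Omega$, which provides a harmonic extension of $\varphi$ across $\partial\Omega$ away from $A$) is approximated from below by continuous fields, exactly as in the logarithmic case, and conditions (iic) and (iv) are invoked precisely as before to prevent maximizing compacts from degenerating to finitely many points or from acquiring spurious unbounded pieces in the limit.

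Granting upper semi-continuity, the supremum in (iiib) is attained on the $\delta_H$-closed set $\mc F$ (condition (iia)): there is an extremal $S\in\mc F$ with $\mc{E}_\varphi^\Omega[S]=\sup_{F\in\mc F}\mc{E}_\varphi^\Omega[F]$, the Green's analogue of Corollary~\ref{cor3.3}. It remains, as in Lemma~\ref{lem3.5}, to show that $\lambda=\lambda_{\varphi,S}$ is a critical measure, from which the $S$-property \eqref{eq1.6} for its total Green potential follows by the machinery of Section~\ref{subsec4.1} and \cite{MaRa11}. For this I would use that, writing $\lambda_E$ for the balayage of $\lambda_S$ onto $\partial\Omega$, one has $V_\Omega^{\lambda_S}=V^{\lambda_S-\lambda_E}$, so that the Green's $S$-problem on $S$ in the field $\varphi$ relative to $\Omega$ is a logarithmic $S$-problem on $S$ in the field $\varphi$ together with the (fixed) field of the negative charge $-\lambda_E$ on $E=\ol{\mbb C}\sim\Omega$ — this is the remark already made in the excerpt. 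Computing $D_h\mc{E}_\varphi^\Omega(\lambda)$ for an $A$-variation $z\mapsto z^t=z+th(z)$ with $h\equiv 0$ near $A$ then produces the bilinear expression \eqref{eq4.3} modulo the contribution of the reflected masses on $\partial\Omega$, which for a circular $\Omega$ is an explicit rational perturbation. For variations whose support stays inside $\Omega$ one runs them in both directions and obtains $D_h\mc{E}_\varphi^\Omega(\lambda)=0$; for variations that push $S$ against $\partial\Omega$ only the one-sided derivative is defined, and extremality of $S$ under the weakened condition (iib) gives $D_h\mc{E}_\varphi^\Omega(\lambda)\ge 0$. The first family of identities already forces \eqref{eq4.4}--\eqref{eq4.5} and hence the $S$-property on every subarc of $\supp\lambda$ lying in $\Omega$, while the portion of $\supp\lambda$ on $\partial\Omega$ is controlled by the equilibrium inequality in \eqref{eq1.5}, which is automatic for the Green's equilibrium measure; this completes the proof of the $S$-property of $S$.

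The step I expect to be the real obstacle is precisely this boundary interaction: making the variational computation of $D_h\mc{E}_\varphi^\Omega(\lambda)$ rigorous when $S$ (equivalently $\supp\lambda$) meets $\partial\Omega$, and verifying that the one-sided version of (iib) still suffices to impose the $S$-property on the interior arcs while rendering the boundary part harmless. Concretely, one must rule out that a maximizing sequence could be improved by deforming $S$ through $\partial\Omega$ into $\Omega$; this is exactly where the hypothesis that $\varphi$ extends harmonically to $\Omega_0\supset\ol\Omega$ (off the finite set $A$) is used, so that the reflection of the field across $\partial\Omega$ is well defined and the one-sided derivative along the boundary is under control. Apart from this point, every remaining step is a routine transcription of the logarithmic argument of Section~\ref{sec9}, using that the passage from a general $\Omega$ with finitely many boundary components to a circular model domain is conformal, and that $V_\Omega^\mu$, $\mc{E}_\varphi^\Omega$, and the Green's $S$-property are all conformally invariant.
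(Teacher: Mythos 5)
Your proposal follows essentially the same route as the paper: the max--min energy scheme, upper semi-continuity of $\mc{E}_\varphi^\Omega$ on compacts with boundedly many components (the Green's analogue of Theorem~\ref{thm3.2}, i.e.\ Theorem~\ref{thm6.2}, proved first for continuous fields and then by approximation), extraction of the extremal compact, and reduction of the variational step to the logarithmic case by replacing the Green potential with the logarithmic potential of $\lambda_S$ minus its balayage onto $\partial\Omega$, with one-sided variations handling the boundary as in the weakened condition (iib). This matches the paper's own (admittedly sketchy) argument, including its identification of the boundary interaction as the only genuinely new point.
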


An analogue of representation (3.5) is valid for the potential of Green's equilibrium measure $\lambda$ of $S$ (external field has to be modified; in case when $\Omega $ is upper half-plane one has to add potential of a negative "mirror reflection" of $\lambda$ to the external field).

Proof of the theorem follows all the steps of $\max$--$\min$ energy method, which are presented in Section \ref{sec9} for logarithmic potential. First, we need to study continuity properties of the equilibrium energy functional
\begin{equation}
\label{eq6.5}
\mc{E}_\varphi^\Omega[F]=\inf_{\mu\in\mc{F}(F)}\mc{E}_\varphi^\Omega(\mu):\mc{F}\to[-\infty,+\infty]
\end{equation}
on $\mc{F}$ with the Hausdorff metric and establish

\begin{theorem}\label{thm6.2}
The functional $\mc{E}_\varphi^\Omega$ above is upper semi-continuous.
\end{theorem}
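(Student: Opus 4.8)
The plan is to mirror, essentially verbatim, the proof of Theorem \ref{thm3.2} (upper semi-continuity of the logarithmic equilibrium energy functional) that is carried out in Section \ref{sec9}, adapting each step to the Green potential setting. Recall that the Green potential $V_\Omega^\mu$ differs from the logarithmic potential only by a globally bounded, smooth correction on compacts staying at a definite distance from $\partial\Omega$: writing $g(z,t) = \log\frac{1}{|z-t|} + \gamma_\Omega(z,t)$ with $\gamma_\Omega$ harmonic in each variable and continuous up to $\overline{\Omega}\times\overline{\Omega}$ minus the corner set, one has $\mc{E}_\varphi^\Omega(\mu) = \mc{E}_{\varphi + \psi_\mu/2}(\mu)$ up to an additive correction term $\iint \gamma_\Omega\, d\mu\, d\mu$ that is continuous in $\mu$ (weak-$*$) and in $F$ (Hausdorff) as long as the compacts stay in a fixed compact subset of $\Omega$ — which, after the conformal reduction to a circular domain and cutting at a large disc as in condition (iv), they do. So the Green case reduces to the logarithmic case plus control of this explicitly continuous extra term.

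First I would reduce, by conformal invariance, to the circular domain $\Omega$ and, using condition (iv) exactly as in the logarithmic proof, arrange that all compacts under consideration lie in a fixed compact $K_0 \subset \Omega_0$ bounded away from $\partial\Omega$. Then I would introduce the enlarged class $\mc{K}$ of compacts in $K_0$ with at most $s$ components and first treat the case of a \emph{continuous} external field $\varphi$ on $K_0$: here I claim $\mc{E}_\varphi^\Omega : \mc{K} \to (-\infty, +\infty)$ is in fact continuous, by the same two-sided argument as in Section \ref{sec9}. The lower bound (lower semi-continuity) uses weak-$*$ compactness of the equilibrium measures $\lambda_{F_n}$ and lower semi-continuity of the energy integral, noting that the Green kernel $g(z,t)$ is lower semi-continuous and bounded below on $K_0 \times K_0$. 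The upper bound (upper semi-continuity) is the part requiring the standard balayage/spreading construction: given $F_n \to F$ in $\delta_H$, one pushes the equilibrium measure $\lambda_F$ of the limit onto $F_n$ via a near-identity map supported near $F$, controlling the resulting energy increase by continuity of $\varphi$ and of the kernel on the relevant compact region; condition (iic) (bounded number of components) is what keeps this spreading under control, as the remark after Theorem \ref{thm3.1} emphasizes.

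Next I would pass from continuous $\varphi$ to $\varphi$ harmonic off the finite set $A$: approximate $\varphi$ from below by an increasing sequence of continuous fields $\varphi_k \uparrow \varphi$ on $K_0$ (possible since the singularities of a harmonic function, here logarithmic-type poles at points of $A$ where $\varphi \to \pm\infty$, can be truncated), so that $\mc{E}_{\varphi_k}^\Omega[F] \uparrow \mc{E}_\varphi^\Omega[F]$ for each fixed $F$. A supremum of upper semi-continuous functions need not be upper semi-continuous, so the monotone structure must be used more carefully — exactly as in the logarithmic proof: one shows $\mc{E}_{\varphi}^\Omega[F] = \inf_k \sup\{\mc{E}_{\varphi_k}^\Omega[G] : \delta_H(G,F) < 1/k\}$ or an analogous device, each term being upper semi-continuous by the continuous-field case, and then uses that an infimum of upper semi-continuous functions is upper semi-continuous. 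Throughout, the extra Green correction term $\iint \gamma_\Omega\, d\mu\, d\mu$ is handled trivially since $\gamma_\Omega$ is bounded and continuous on $K_0 \times K_0$.

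The main obstacle I expect is exactly the one flagged in Section \ref{sec9} for the logarithmic case: proving the upper semi-continuity bound for a \emph{continuous} field, i.e. constructing, for a Hausdorff-convergent sequence $F_n \to F$, admissible transport maps $T_n : F \to F_n$ (near-identity, with controlled Jacobian) such that $\mc{E}_\varphi^\Omega((T_n)_*\lambda_F) \to \mc{E}_\varphi^\Omega(\lambda_F) = \mc{E}_\varphi^\Omega[F]$; the subtlety is uniformity with respect to the (bounded but not fixed) component structure, and ensuring the pushed measure does not concentrate. Everything else — the conformal reduction, the $\delta_H$-vs-Euclidean metric equivalence on $K_0$, the monotone approximation of the field, and the bookkeeping of the harmonic correction $\gamma_\Omega$ — is routine given the logarithmic case. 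Since the only place $\partial\Omega$ enters is through $\gamma_\Omega$, and that is harmless, Theorem \ref{thm6.2} follows by the same argument as Theorem \ref{thm3.2} with the cited modifications.
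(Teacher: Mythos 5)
Your overall architecture --- enlarging to the class of compacts with at most $s$ components, proving genuine continuity of $\mc{E}_\varphi^\Omega$ for a continuous field, and then passing to singular fields by monotone approximation --- is exactly the paper's plan for Theorem \ref{thm6.2}, which is itself only a pointer to the logarithmic machinery of Section \ref{sec9}. But your last step has the monotonicity backwards, and this is a genuine gap. An increasing sequence of \emph{continuous} fields $\varphi_k\uparrow\varphi$ cannot exist near a point where $\varphi\to-\infty$ (any continuous $\varphi_k\le\varphi$ would itself be forced to $-\infty$ there), and the $-\infty$ singularities are precisely the ones that destroy continuity and leave only upper semi-continuity. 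Moreover, even where such an approximation exists, an increasing limit of continuous functionals is lower, not upper, semi-continuous, and the identity $\mc{E}_\varphi^\Omega[F]=\inf_k\sup\{\mc{E}_{\varphi_k}^\Omega[G]:\delta_H(G,F)<1/k\}$ that you invoke is neither proved nor what the logarithmic proof does. The correct order of operations, which you should mirror from Section \ref{subsec9.9}, is: first dispose of the $+\infty$ singularities by showing (Lemma \ref{lem9.3}) that the equilibrium measure does not charge $\{\varphi>M\}$ for a suitable $M$, so that $\mc{E}_\varphi$ is unchanged by truncation from above and is \emph{continuous} whenever $\varphi$ is bounded below (Lemma \ref{lem9.15}); then set $\varphi_n=\max\{\varphi,-n\}$, a \emph{decreasing} sequence of such fields with $\mc{E}_{\varphi_n}[F]\downarrow\mc{E}_\varphi[F]$, and use that a decreasing limit of continuous functions is upper semi-continuous.

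A secondary point: your reduction of the Green energy to ``logarithmic energy plus a bounded continuous correction'' via $g(z,t)=\log\frac{1}{|z-t|}+\gamma_\Omega(z,t)$ requires the compacts to stay at a definite distance from $\partial\Omega$, since $\gamma_\Omega(z,t)\to-\infty$ as $z$ and $t$ approach the same boundary point. The hypotheses of Theorem \ref{thm6.1} allow $F\subset\ol{\Omega}$, and condition (iv) does not deliver what you claim: it permits deleting a disc from a Hausdorff-convergent sequence (to rule out degenerate limits such as $F=\ol{\mbb{C}}$), not keeping $F$ away from $\partial\Omega$. So either you must argue that the relevant compacts (or at least their equilibrium measures) do not accumulate on $\partial\Omega$, or you must estimate the Green kernel directly near the boundary. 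The paper is silent on this, but your write-up asserts it as routine, and it is the one place where the Green case is not a formal repetition of the logarithmic one.
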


Like the proof of theorem \ref{thm3.2} in Section \ref{sec3}, it is convenient to introduce a larger class $\mc{K}$ of compacts $K$ in $\Omega$ satisfying condition $s(K)\le s$, for any $K\subset\mc{K}$. For continuous external field $\varphi$ the functional
$$
\mc{E}_\varphi^\Omega:\mc{K}\to(-\infty,+\infty)
$$
is continuous. Then fields with singular points are approximated by continuous fields. Theorem \ref{thm6.2} implies that there exist the extremal compact $s$ with
\begin{equation}
\label{eq6.6}
\mc{E}_\varphi^\Omega[S]=\sup_{F\subset\mc{F}}\mc{E}_\varphi^\Omega[F];\quad S\in\mc{F}.
\end{equation}
which leads to the following theorem.

\begin{theorem}\label{thm6.3}
The compact $S$ in \eqref{eq6.6} above has Green's $S$-property.
\end{theorem}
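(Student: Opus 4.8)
The plan is to mirror the argument that derives Theorem \ref{thm3.4} from Corollary \ref{cor3.3} in the logarithmic case, transferring everything verbatim to the Green's setting through the device already mentioned in the text: replace the Green's potential problem in $\Omega$ by an equivalent logarithmic problem with a modified external field. Concretely, since $\Omega$ is a circular domain, $g(z,t)$ is the Green function of a finite disjoint union of disc complements, and for the purposes of local variations of a compact $S\subset\ol\Omega$ lying in the interior one may write $g(z,t)=\log\frac1{|z-t|}-u(z,t)$ where, for each fixed $t$, $u(\cdot,t)$ is harmonic across the part of $\Omega$ where the variation is active. Thus the Green's weighted energy $\mc E_\varphi^\Omega(\mu)$ equals a logarithmic weighted energy $\mc E_{\wt\varphi}(\mu)$ for an effective field $\wt\varphi$ (in the half-plane model this is $\varphi$ plus the potential of the reflected measure $-\mu$, as noted after the statement). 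The key point is that the variational derivative $D_h\mc E_\varphi^\Omega(\mu)$ admits an explicit integral representation entirely analogous to \eqref{eq4.3}, now with the Green's kernel in place of $\log\frac1{|x-y|}$; this is the Green's analogue of Lemma 2--3 of \cite{MaRa11}.

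First I would record that, by Theorem \ref{thm6.2} and its consequence \eqref{eq6.6}, the extremal compact $S$ exists in $\mc F$ with $\mc E_\varphi^\Omega[S]$ maximal, and let $\lambda=\lambda_{\varphi,S}^\Omega$ be its Green's equilibrium measure. Second, I would show $\lambda$ is a critical measure for the Green's energy: for any admissible $A$-variation $h$ with $h\equiv0$ on $(A)_\delta$ and (because of condition (iib) in the one-sided form) with $S^t\subset\ol\Omega$ at least for one sign of $t$, the function $t\mapsto\mc E_\varphi^\Omega[S^t]$ attains its maximum over $\mc F$ at $t=0$, hence its one-sided derivative is $\le0$; applying this to both $h$ and, where admissible, $-h$, together with the identity $\mc E_\varphi^\Omega[S^t]\ge \mc E_\varphi^\Omega(\lambda^t)$ with equality to first order at $t=0$ (the equilibrium measure of $S^t$ differs from $\lambda^t$ only to higher order, exactly as in the logarithmic case), forces $D_h\mc E_\varphi^\Omega(\lambda)=0$ for the two-sided variations and $\ge0$ "inward" for the one-sided ones — which is all that is needed, since by the reduction in Remark \ref{rem4.1} it suffices to test against modified Schiffer variations, and those can be taken two-sided in the interior of $\ol\Omega$. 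Third, from criticality I would invoke the Green's analogue of \eqref{eq4.4}: the function built from the Cauchy transform of $\lambda$ plus $\Phi'$ (suitably corrected by the reflection term) is holomorphic off $A$, whence $\supp\lambda$ consists of trajectories of the associated quadratic differential and, via the representation \eqref{eq4.5} in its Green's form, the normal-derivative balance \eqref{eq1.6} for $V_\Omega^\lambda+\varphi$ holds off a set of capacity zero. That is precisely the Green's $S$-property.

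The main obstacle is the boundary $\partial\Omega$ and the fact, flagged in the text, that conditions (iia) and (iib) cannot coexist for families of compacts touching a large boundary; this is why (iib) is stated only for one-sided variations. The care needed is to verify that the weaker one-sided hypothesis still delivers the stationarity identity $D_h\mc E_\varphi^\Omega(\lambda)=0$ on the part of $\supp\lambda$ that lies in the interior of $\ol\Omega$ — which is where the $S$-property is being asserted — and that no spurious contribution comes from pieces of $S$ on $\partial\Omega$ (on which $V_\Omega^\lambda\equiv0$, so they carry no charge of $\lambda$ and are irrelevant to \eqref{eq1.6}). A second, more routine, technical point is justifying differentiation under the integral sign for the Green's kernel near $\partial\Omega$; this is handled by the decomposition $g=\log\frac1{|z-t|}-u$ above and the smoothness of $u$ in the interior, exactly as the analogous step is handled in Section \ref{sec9} for the logarithmic kernel.
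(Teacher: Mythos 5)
Your proposal is correct and follows essentially the same route as the paper: reduce the Green's energy to a logarithmic one with a harmonically corrected field (the ``mirror reflection'' remark after Theorem \ref{thm6.1}), show by the $\max$--$\min$ variational argument of Section \ref{subsec9.10} that the Green's equilibrium measure of the extremal compact is a Green's critical measure, and then pass through the analogues of \eqref{eq4.4}--\eqref{eq4.5} to obtain the normal-derivative symmetry. The paper's own proof of Theorem \ref{thm6.3} is only a two-sentence sketch deferring to \cite{KaRa05}, so your explicit handling of the one-sided variations near $\partial\Omega$ supplies detail the paper omits rather than departing from its method.
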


As in case of logarithmic potebtial the proof of theorem \ref{thm6.3} is based on the fact that equilibrium measure of extremal compact $S$ in \eqref{eq3.6} is a Green's critical measure associated with external field $\varphi$ and fixed set $A$. The part of the proof related to variations may be completely reduced  to the case of logarithmic potential. Some details related specifically to Green's potential are discussed in \cite{KaRa05}.

\subsection{Best rational approximations to the sum of exponentials on $\mbb{R}_+$}\label{subsec6.5}
We briefly mention an approximational problem which presents a typical situation when conditions of theorem \ref{thm6.1} are not satisfied. Let
$f(x)=\sum\limits^m_{k=1}c_k e^{-\lambda_k x}$ where $\lambda_k>0$ and $C_k\in\ol{\mbb C}$; consider $\rho_n=\min\limits_{\Gamma\in\mbb R}\max\limits_{x\in\mbb R_+}|f(x)-r(x)|$. We want to find if $\lim\limits_{n\to\infty}\rho_n^{1/n}$ (if it exists) and to construct a near best approximation. The problem is clearly a direct generalization of the case $m=1$ discussed in Section \ref{subsec6.3} above. The interpolation approach in Section \ref{subsec6.3} is applied and leads (through orthogonal polynomials as in Section \ref{subsec6.4}) to a similar $S$-problem for Green's potential in the domain $\Omega=\mbb{C}\sim\mbb{R}_+$ for class $\mc{F}$ of curves $F\subset\Omega$ connecting $a-i$ and $a+i$ with a large $a>0$. The difference with the case $m=1$ is that external field $\varphi=\Re z$ has to be replaced with the field
$$
\varphi(z)=
\begin{cases}
\ \Re z, \ \ \Re z >0\\
\lambda\Re z, \ \Re z <0\\
\end{cases}
\quad\lambda=\frac{\min\lambda_k}{\max\lambda_k}
$$
This function is not harmonic in $\Omega$ (its singularities constitute a whole line --- imaginary axis $I$) and theorem \ref{thm6.1}  does not apply. One can verify that the $\max$--$\min$ method allows us to prove existence of extremal compact $F_0\in\mc{F}$ which satisfies $\mc{E}^\Omega_\varphi\[F_0\]=\max\limits_{F\in\mc{G}}\mc{E}_\varphi^\Omega[F]$. 
If $\cop\(F_0\cup I\)=0$ then $F_0$ has the $S$-property. Most likely that $F_0\cup I$ consists of two points, but the proof of this fact is not known. The problem is open.

\subsection{Non-Linear Schr\"{o}dinger}\label{subsec6.6}
Another example of a Green's $S$-problem which does not satisfy conditions of theorem \ref{thm6.1} come from problem of semiclassical limit for the focusing $NLS$. The problem may be reduced to the following $S$-problem for Green's potential. Let
$$
\varphi(z)=Re(az^2+bz+c)+V^\sigma(z)
$$
where $\sigma$  is a positive measure on the interval $\Delta= [0, iA]$ of imaginary axis $d\sigma(iy)=\eta(y)dy$, $y\in[0,A]$ and $\eta$ is an analytic  ($\eta(y)=1$ would be a typical example). Coefficients $a,b,c$ depend on original variables $x,t$ so that we have a family of quadratic functions; see \cite{KaMLMi03, KaRa05} (see also \cite{MiKa98, ToVeZh04,ToVeZh07, BeTo10} for other reductions).

Let $\Omega$ be the upper half-plane and $\mc{F}$ be a set of all curves $F$ which begins at zero on one side of $\partial(\Omega\sim\Delta)$ and go in $\Omega$ around $\Delta$ to the point zero on the opposite side of $\partial(\Omega\sim\Delta)$.

Now, the problem is to determine if there is a compact $S\in\mc{F}$ with Green's $S$-property relative to $\Omega$ and $\varphi$. We note that the problem is very similar to the $S$-problem related to rational approximations for sum of exponentials in Section \ref{subsec6.4} above. The resemblance may apparently be explained by the fact that both problems may be technically formulated using a matrix Riemann--Hilbert problem with $2\times2$ matrices of a similar structure.

The two problems in sec \ref{subsec6.5} and \ref{subsec6.6} have the same property - external field $\varphi$ has a 
line singularity. We may still consider $\max$--$\min$ energy problem and find $F_0$ with
$$
\mc{E}_\varphi\[F_0\]=\sup_{F\in\mc{F}}\mc{E}_\varphi[F]
$$
It is not difficult to prove that $F_0$ (which belongs in general to $\ol{\Omega}$) does not have a large intersection with $\mbb{R}$. (Actually $F_0\cap\mbb{R}$ consists of one or two copies of zero.) However an intersection of $F_0$ and $\Delta$ may have a positive capacity and in such cases we actually do not know if original $S$-problem has a solution or not.

Some analysis of the case is presented in \cite{KaRa05}, but the problem remains open.

\subsection{$S$-problem for vector potentials}\label{subsec6.7}
For an integer $p\ge 1$ consider $p$ classes of compacts $\mc{F}_j\subset\mc{K}$ with an external field $\varphi_j$ s defined on $F_j\in\mc{F}_j$,  $j=1,\dotsc,p$. For a fixed vector-compact $\vec{F}=\(F_1,\dotsc,F_p\)\in\mc{F}$  and a vector $\vec {m} =  (m_1,\dots, m_p)$ with positive components
(total masses on componrnts of vector-compact) we define a family of vector-measures
$$
\vec{\mc{M}}=\vec{\mc{M}}\(\vec{F},\vec{m}\)=\left\{\vec{\mu}
=\(\mu_1,\dotsc,\mu_p\):\mu_j/m_j\in\mc{M}\(F_j\)\right\}.
$$
For a fixed positive (or non-negative) definite real symmetric matrix $A=\left\|a_{ij}\right\|^p_{i,j=1}$ we define  associated  energy and, susequently, weighted energy on $\vec{\mc{M}}$
$$
\mc{E}\(\vec{\mu}\)
=\sum^P_{i,j=1}a_{ij}\[\mu_i,\mu_j\];\quad
\mc{E}_\varphi\(\vec{\mu}\)
=\mc{E}\(\vec{\mu}\)+2\sum^p_{j=1}\int\varphi_j d\mu_j
$$
where $[\mu,\nu]=\int V^\nu d_\mu$-mutual energy of $\mu$ and $\nu$.
\begin{lemma}
\label{lem6.4}
If $A$ is non-negative definite and $a_{ij}\ge 0$ for $F_i\cap F_j\ne\emptyset$ then there exist a unique $\vec{\lambda}\in\vec{\mc{M}}$ (equilibrium measure for $\(\vec{\varphi},\vec{F},\vec{m}\)$)
$$ \mc{E}_{\vec{\varphi}}(\vec{\lambda})=\min_{\vec{\mu}\in\vec{\mc{M}}}\mc{E}_\varphi(\vec{\mu}). $$
(If $A$ is positive definite we may admit small intersections of such components). 
\end{lemma}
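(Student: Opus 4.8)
The plan is to prove Lemma~\ref{lem6.4} by the standard variational (direct) method of potential theory, adapting the classical Frostman argument for a single measure to the vector setting with the positivity hypotheses doing exactly the work needed to keep the energy functional lower semi-continuous and coercive. First I would reduce to the case $\vec m = (1,\dots,1)$ by rescaling, and I would fix a compact exhaustion: since each $F_j$ is compact and $\varphi_j$ continuous (hence bounded) on $F_j$, all the mutual energies $[\mu_i,\mu_j]$ for $\mu_k\in\mc M(F_k)$ are bounded below (the logarithmic kernel $\log(1/|x-y|)$ is bounded below on a bounded set, and where $F_i\cap F_j\neq\emptyset$ the hypothesis $a_{ij}\ge 0$ prevents a $-\infty$ contribution from the diagonal singularity), so $\mc E_{\vec\varphi}(\vec\mu) > -\infty$ uniformly and the infimum $w := \inf_{\vec\mu\in\vec{\mc M}}\mc E_{\vec\varphi}(\vec\mu)$ is a finite real number. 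If $w = +\infty$ for all $\vec\mu$ there is nothing to prove (no finite-energy competitor), so assume $w<\infty$.

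Next I would take a minimizing sequence $\vec\mu^{(n)}$. Since $\vec{\mc M}$ is a product of the weak-$*$ compact sets $\mc M(F_j)$, by Helly's selection theorem we may pass to a subsequence with $\vec\mu^{(n)} \overset{*}{\to} \vec\lambda \in \vec{\mc M}$. The key step is lower semi-continuity: I would write $\mc E(\vec\mu) = \sum_{i,j} a_{ij}[\mu_i,\mu_j]$ and diagonalize the quadratic form. Because $A$ is non-negative definite, $\mc E(\vec\mu) = \sum_\ell \big[\nu_\ell^{+},\nu_\ell^{+}\big] + \big[\nu_\ell^{-},\nu_\ell^{-}\big] - 2\big[\nu_\ell^{+},\nu_\ell^{-}\big]$ type expression in suitable signed combinations $\nu_\ell = \sum_j c_{\ell j}\mu_j$; each such ``squared'' energy of a (possibly signed) measure is weak-$*$ lower semi-continuous by the standard truncation argument (approximate $\log(1/|x-y|)$ from below by bounded continuous kernels $\min(\log(1/|x-y|), M)$, apply weak-$*$ convergence of the product measure $\mu_i^{(n)}\otimes\mu_j^{(n)}$, then let $M\to\infty$ by monotone convergence). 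The cross terms with $i\neq j$ and $F_i\cap F_j = \emptyset$ are actually continuous since the kernel is bounded and continuous there; the cross terms with $F_i\cap F_j\neq\emptyset$ have $a_{ij}\ge 0$, so they are handled as part of a lower semi-continuous square. The linear term $2\sum_j \int\varphi_j\,d\mu_j$ is weak-$*$ continuous since each $\varphi_j\in C(F_j)$. Hence $\mc E_{\vec\varphi}(\vec\lambda) \le \liminf_n \mc E_{\vec\varphi}(\vec\mu^{(n)}) = w$, so $\vec\lambda$ is a minimizer.

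For uniqueness I would use strict convexity. On the subspace of signed measures of total mass zero (i.e. differences of two competitors, which necessarily have $\mu_j - \mu_j'$ of mass zero on each $F_j$), the logarithmic energy form $[\nu,\nu]$ is strictly positive definite (classical: $[\nu,\nu]\ge 0$ with equality only if $\nu=0$, valid for signed measures of compact support and total mass zero with finite energy). Combined with $A$ non-negative definite, the quadratic form $\vec\nu\mapsto \mc E(\vec\nu)$ is strictly positive on the zero-mass subspace provided the combination doesn't lie in the kernel of $A$; if $A$ is merely non-negative definite this can fail only along directions where $\sum_j c_j \nu_j$ collapses, and one checks that the coincidence $F_i\cap F_j\neq\emptyset$ with $a_{ij}\ge 0$ together with the constraint structure still forces the two minimizers to agree componentwise — this is where I would invoke the parenthetical remark that for $A$ positive definite one only needs the components to be disjoint up to small intersections. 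Concretely: if $\vec\lambda,\vec\lambda'$ are both minimizers, then by the parallelogram identity $\mc E_{\vec\varphi}\big(\tfrac{\vec\lambda+\vec\lambda'}{2}\big) = \tfrac12\mc E_{\vec\varphi}(\vec\lambda) + \tfrac12\mc E_{\vec\varphi}(\vec\lambda') - \tfrac14\mc E(\vec\lambda - \vec\lambda')$, and since the midpoint is again in $\vec{\mc M}$ its energy is $\ge w$, forcing $\mc E(\vec\lambda-\vec\lambda')\le 0$; positive (semi-)definiteness then gives $\mc E(\vec\lambda-\vec\lambda')=0$, and the strict positivity on zero-mass signed measures finishes it.

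The main obstacle I anticipate is precisely the borderline behavior when $A$ is only non-negative (not strictly) definite together with overlapping compacts $F_i\cap F_j\neq\emptyset$: one must rule out that a nontrivial ``null direction'' of the combined quadratic form $\mc E$ on the constraint set produces distinct minimizers, and also rule out that finite energy is impossible due to the diagonal singularity on the overlap. Both are controlled by the hypothesis $a_{ij}\ge 0$ on overlapping pairs — this ensures the singular diagonal contributions only help (lower-bound the energy) rather than allowing a $-\infty$ escape, and ensures that any null direction of $A$ does not interact destructively with the logarithmic kernel's positivity on the overlap region. I would make this precise by decomposing $A = \sum_\ell \vec c_\ell\, \vec c_\ell^{\,T}$ and arguing kernel-direction by kernel-direction. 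The rest is routine Frostman-type bookkeeping.
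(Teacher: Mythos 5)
The paper itself does not prove Lemma \ref{lem6.4}; it states it and defers to the original papers of Gonchar--Rakhmanov and Gonchar--Rakhmanov--Sorokin. So your proposal is measured against the standard direct-method proof in those references, and for \emph{existence} it is essentially that proof --- but not by the route you emphasize. The diagonalization $A=\sum_\ell \vec c_\ell\,\vec c_\ell^{\,T}$ turns $\mc E(\vec\mu)$ into a sum of energies $[\nu_\ell,\nu_\ell]$ of \emph{signed} measures, and the energy of a signed measure is \emph{not} weak-$*$ lower semi-continuous ``by the standard truncation argument'': truncating the kernel from below gives lower semi-continuity only for mutual energies of \emph{positive} measures, while the cross term $-2[\nu_\ell^{+},\nu_\ell^{-}]$ enters with the wrong sign and is only upper semi-continuous where the supports overlap. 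The correct and simpler route is the one you mention only in passing: split the double sum into pairs with $F_i\cap F_j=\emptyset$, where the kernel is bounded and continuous so $a_{ij}[\mu_i,\mu_j]$ is weak-$*$ continuous whatever the sign of $a_{ij}$, and pairs with $F_i\cap F_j\ne\emptyset$, where the hypothesis $a_{ij}\ge 0$ multiplies the lower semi-continuous mutual energy of two positive measures. Together with weak-$*$ compactness of $\vec{\mc M}$ and the lower bound you establish, this gives existence; the diagonalization should be dropped from this step.

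The genuine gap is in uniqueness, at exactly the point you flag as the main obstacle and then dispose of with ``one checks that \dots the constraint structure still forces the two minimizers to agree componentwise.'' The parallelogram identity correctly yields $\mc E(\vec\lambda-\vec\lambda')=0$, and each component $\nu_j=\lambda_j-\lambda_j'$ has total mass zero, so $[\nu_j,\nu_j]\ge 0$ with equality iff $\nu_j=0$. But when $A$ is only non-negative definite, $\sum_{i,j}a_{ij}[\nu_i,\nu_j]=0$ does not force each $\nu_j=0$: if $\vec c$ is a null vector of $A$ and $\nu_j=c_j\sigma$ for a single mass-zero finite-energy $\sigma$ carried by all the relevant $F_j$, then $\mc E(\vec\nu)=(\vec c^{\,T}A\vec c)\,[\sigma,\sigma]=0$. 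Concretely, for $p=2$ with $A$ the all-ones matrix, $F_1=F_2$ and $\varphi_1=\varphi_2$, the functional depends only on $\mu_1+\mu_2$ and componentwise uniqueness genuinely fails even though all hypotheses as literally stated are met; so the ``check'' cannot be routine. A complete argument must show that every null direction of $A$ is incompatible with the support constraints --- for instance, if $c_ic_j<0$ contributes to a rank-one piece $\vec c\,\vec c^{\,T}$ of $A$ then $a_{ij}<0$, hence $F_i\cap F_j=\emptyset$, hence $\nu_i$ and $\nu_j$ cannot cancel inside $[\,c_i\nu_i+c_j\nu_j\,]$ --- and this, the only non-routine step of the lemma, is asserted rather than proved in your proposal.
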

For details see original papers \cite{GoRa81, GoRa85,GoRaSo97}).
Let $\varphi_j$ be harmonic in a neighborhood of $F_j$. We say that $\vec{S}\in\vec{\mc{F}}$ has $S$-property if components of associated vector-equilibrium measure $\lambda_j$ satisfy the follwoing conditions; there exist a set $e$ of zero capacity such that for any $\zeta\in\supp(\lambda_j)\sim e$ there exist a neighborhood $D=D(\zeta)$ for which $\supp(\lambda_j)\cap D$ is an analytic arc and, furthermore, we have
$$
\frac{\partial W_j}{\partial n_1}(\zeta)
=\frac{\partial W_j}{\partial n_2}(\zeta),\quad
\zeta\in\(\supp\lambda_j\)\sim e;\quad
W_j=\sum^P_{i=1} a_{i,j}V^{\lambda_j}+\varphi_j
$$
where $n_1,n$ are opposite normals to $\supp(\lambda_j)$ at  $\zeta$.

The vector $S$-existence problem is more complex than the scalar one. We mention briefly some details connected with the vector $\max$--$\min$ method.  For fixed $ A,\ \vec{m}$ we define the minimal energy functional $\mc{E}_{\vec{\varphi}}[\vec{F}]$  on $\vec{\mc{F}}$  and, then, corresponding extremal vector-compact $\vec{S}$
$$
\mc{E}_{\vec{\varphi}}[\vec{F}]\ =\inf_{\vec{\mu}\in\vec{\mc{M}}(\vec{F})}\mc{E}_\varphi(\vec{\mu})=\mc{E}_\varphi(\vec{\lambda});\quad\quad
\mc{E}_\varphi[\vec{S}]=\sup_{\vec{F}\in\mc{F}}\mc{E}_\varphi[\vec{F}].
$$


Suppose that small variations of $\vec{S}$ with a vector set $\vec{A}$ of fixed points belongs to $\vec{\mc{F}}$ and also that $S_i\cap S_j$ is a finite set for any two components of $S$. Then $S$ has the $S$-property. This fact is rather simple corollary of variational technique described in section \ref{sec4}. The existence of maximizing $\vec{S}\in\vec{\mc F}$ may also be established under essentially same general conditions as in scalar case  (e.g., a finite set of singularities of $\varphi_j$, a finite fixed set $A_j$, a finite number of component of $F_j$; see section \ref{sec9}). But the intersections $S_i\cap S_j$ is more difficult to control.

At the same time there is comparetively general approach to the vector $S$-problem from the theory of Riemann surfaces; see first of all J.Nuttall's papers \cite{Nu81,Nu84}. In addition, a large number of particular situations are investigated in connection with the applications of vector equilibrium problems to approximation theory, random matrices, statistics, etc.; see  \cite{GoRaSu92, BlKu05, ApKuVa08, KuMaWi09} and references therein.

\section{Strong asymptotics}\label{sec7}

Preceeding sections have been devoted to weak asymptotics -- zero distribution of orthogonal polynomials $Q_n$ with varying weights. In case of fixed (not depending on $n$) weight it may be characterized by an equilibrium problem without external field. Formulas of strong asymptotics for such polynomials may also be stated in electrostatic terms; then associated equilibrium problems contain an external field of order $O(\frac1n)$. It is convenient, for a fixed $n$ to renormalize the problem and consider measures of total mass $n$. Then the external field does not depend on $n$ at least in "simply-connected' situations like one in the next section \ref{subsec7.1}. In general, there is a bounded "effective external field" which depend on $n$ and has several components. In this section we go into some detail in case of orthogonality on one or several intervals of real axis.
\subsection{Bernstain-Szeg\H{o} formulas in electrostatic terms}\label{subsec7.1}
We consider orthogonal polynomials $Q_n(x) = x^n + \dots$ on the interval $\Gamma = [-1, 1]$ 
$$ \int_\Gamma Q_n(x)\ x^k~w(x)\,dx\ =\ 0,
\quad k=0,1,\dotsc,n-1.$$ 
with a smooth weight $w$ which is positive except, may be, for a finite set of points.  If the Szeg\H{o} condition $\ \log w \in L (\Gamma)\ $ is satisfied  we have
\begin{equation}
\label{eq7.1}
Q_n(z)=W_n(z)\(1+\epsilon_n(z)\),\quad
\epsilon_n(z)\to 0,\quad z\in\ol{\mbb{C}}\sim\Gamma
\end{equation}
\begin{equation}
\label{eq7.2}
Q_n(x)=W_n^{+}(x) + W_n^{-}(x) + \wt{\epsilon}_n(x),\quad
 2^n\wt \epsilon_n(x)\to 0,\ x\in\Gamma
\end{equation}
\begin{equation}
\label{eq7.3}
W_n(z)={C_n} D(z){\(z^2-1\)^{-\frac14}}\(z+\sqrt{z^2-1}\,\)^{n+\frac12}
\end{equation}
where the Szeg\H{o} function $D(z)\in H(\ol{\mbb{C}}\sim\Gamma)$ is determined by boundary values
\begin{equation}
\label{eq7.4}
\left|D^\pm(x)\right|^{-2}=w(x), \quad(D(\infty)>0),  \quad x\in\Gamma; 
\end{equation}
($C_n=1/2^{n+\frac12}D(\infty)$ is normalization constant; in what follows we do not write such constant explicitly). We note that \eqref{eq7.2} is valid apart from zeros of $w$ and endpoints of $\Gamma$; 
see \cite{Sz75, Ne86, Si05a, Si05b} for details. 

Next, we introduce an external field $\varphi$ associated with the weight $w$
\begin{equation}
\label{eq7.5}
\varphi(x)=\frac12\log\frac1{w(x)}, \quad {\text or}, \quad w(x)=e^{-2\varphi(x)} 
\end{equation} 
and express function $W_n$ in  \eqref{eq7.3} above  in terms of equilibrium measures in the field $\varphi$. Note that for a moment we are concerned not with the error of asymptotics $Q_n\approx W_n$ and not with sharp conditions on weight but rather with the form of $W_n$. Let $\mc M_n(\Gamma)$ be the set of positive Borel measures on $\Gamma$ with total mass of $n$ and $ \wt\lambda\in \mc M_{n+\frac12}$ be the equilibrium measure in the field $\varphi$. We define
$$\lambda_n= \wt{\lambda}-\frac14\,\delta(1)-\frac14\,\delta(-1) \ \in \mc M_{n}$$ 
where $\delta$ is the Dirac $\delta$-function. We have (exact or approximate) representations for $W_n$ with $z\in\mbb C \sim\Gamma$ and  $x\in\Gamma$ in terms of $\lambda_n$
\begin{equation}
\label{eq7.6}
W_n(z)=e^{-\mc{V}^{\lambda_n}(z)},\quad\quad
W_n^{+}(x) + W_n^{-}(x) = A_n (x) \cos\Phi_n(x)
\end{equation}
where $\mc{V}^{\lambda_n}={V}^{\lambda_n}+i\ \wt{{V}}^{\lambda_n}$ is the complex potential of $\lambda_n$ and
\begin{equation}
\label{eq7.7}
 A_n (x) = C_n {(1-x^2)^{-\frac14} {w^{-\frac12}(x)}}, \quad  \Phi_n (x) =\pi\lambda_n\{[x,1]\} =\pi \int_x^1 d\lambda_n
\end{equation}

We have exact equality in \eqref{eq7.6} if $w>0$ and $w\in C^1(\Gamma)$ so that $\varphi\in C^1(\Gamma)$ and $n$  is large enough. Otherwise this representation is approximate which is not significant for applications of $W_n$ in asymptotic formulas since the error of approximation here is generally not large compare to $\epsilon$ in \eqref{eq7.1}. Measure $\lambda_n$ is not positive which is not important too since it can be approximated by a positive measure and this can made in many way. One way is to cut two parts  of magnitude $\frac14$ each from the measure $\wt \lambda$ near the two endpoints of the support. 

Another way is to define  $\lambda_n$  the equilibium measure with normalization $\mc{M}_n(\Gamma)$ in the external field $$\varphi_0(x)=\frac12\log(1/w_0(x))  \quad {\text where} \quad  w_0(x)=(1-x^2)^{\frac12}w(x)$$
($w_0$ is the trigonometric weight) in these terms we have $A_n(x)= C_n {w_0}^{-1/2}(x) $ in place of \eqref{eq7.7}. A generalization of this way to introduce $\lambda_n$ is actually used in multiconnected cases in sections \ref{subsec7.2},  \ref{subsec7.3} and section  \ref{sec8}  below.

Anyway, using electrostatic terms one can define a measure $\lambda_n \in \mc M_n(\Gamma)$ which plays a role of "a fine, continuous model"  for zero distribution of orthogonal polynomial $Q_n$. More exactly, zeros of $Q_n$ are distributed uniformly with respect to $\lambda_n$ (apart from zeros of $w$ and endpoints of the $\Gamma$) with errors small with respest to distances between zeros. Equivalently, $\lambda_n$-measure of an interval between two susequent zeros of $Q_n$ is equal to $1+o(1)$. Exponential of complex potential of $\lambda_n$ may be, then, viewed as a continuous model  for the orthogonal polynomial itself (see \eqref{eq7.7}). In multiconnected cases below we will use generalizations of this definition.



Representations in terms of $\lambda_n$ are, in particular, useful when $\varphi(x)$ is smooth but it has, say, a finite number of exponential zeros such that the Szeg\H{o} condition
$$
\(1-x^2\)^{-1/2}\log w(x)\in L_1(\Gamma).
$$
is not satisfied and the Szeg\H{o} function  $D$ does not exist. Subsequently, $W_n$ in \eqref{eq7.3} does not exist, however, the approximation of this function in \eqref{eq7.6}  exists. Asymptotic representation  \eqref{eq7.1}  (and  also \eqref{eq7.2} away from zeros of $w$)  is valid with $W_n$ defined in terms of equilibrium measures in  \eqref{eq7.6}.  It seems that  this fact was not generally proved yet; see some discussion in \cite{LoRa88, LeLu01}.

Significant progress has been obtained in the last three decades by the application of potential-theoretic methods in the investigation of polynomials orthogonal on the whole real axis and more general noncompact sets. Discussion of these results goes beyond the scope of this pape; see, in particular, original papers \cite{Ra82, GoRa84, MhSa84, Ra93} and books \cite{StTo92, LeLu01, SaTo99}
\subsection{Orthogonality on several intervals}\label{subsec7.2} Situation become more complicated when from the case of a single interval $\Gamma$ 
 we pass to the case when $\Gamma $ is a union of $p>1$ dsjoint intervals $\ \Gamma_k=\[a_{2k-1},a_{2k}\]$, $\ k=1,\dotsc,\ p\ $ where $a_1<a_2<\dotsb<a_{2p}$ are real.

First results on strong asymptotics for polynomials $Q_n$ orthogonal on $\Gamma$ with a (smooth enough, positive) weight $w(x)$ were obtained by N. Akhiezer \cite{Ak60}. He observed, in particular, that floating zeros of polynomials (zeros located in gaps between intervals) are determined by the Jacobi Inversion Problem (JIP). Systematic study of the case were included in the well-known monograph by H. Widom \cite{Wi69}. 

The asymptotic formula for $Q_n$ is constructed from special functions associated with the domain $\Omega=\ol{\mbb{C}}\sim\Gamma$. Let $\ \omega_k(z)$ be the harmonic measure of $\Gamma_k$; that is,  $\omega_k$  is harmonic in $\Omega$ and $\ \omega_k=1\ $ on $\Gamma_k\ $, $\ \omega_k=0\ $ on $\Gamma_j$, $j\ne k$;  $\ \Omega_k(z)=\omega_k(z)+i\ \wt{\omega}_k(z)$ is corresponding analytic functions . For $\zeta\in\Omega\ $ let $\ g(z,\zeta)\ $ be the Green function with logarithmic pole at $\zeta$ and $\ G(z,\zeta)=g(z,\zeta)+i\ \wt{g}(z,\zeta)\ $ be corresponding complex Green function.
Finally, we assume that the function $\varphi(x)=-\frac12\log w(x)$ has a harmonic continuation to $\Omega$ (Szeg\H{o} condition). So, there exist a Szeg\H{o} function (with multi-valued argument)
$$
D(z)=\exp (\varphi(z)+i \wt{\varphi}(z));
\quad |D^\pm(x)|^{-2}=w(x).
$$

Asymptotic formula includes also $p-1$  couples of parameters
$$
\zeta_{k,n}\in\wt{\Gamma}_k
=\[a_{2k},a_{2k+1}\],\quad 
s_{k,n}=\pm 1\quad k=1,\dotsc,p-1
$$
For each $n$ the collection $\left\{\zeta_{k,n},s_{k,n}\right\}$ is uniquely defined by a JIP  \eqref{eq7.10} below. Points $\zeta_{k,n}\in\Omega$ with $s_{k,n}=1 $ asymptotically represent zeros of $Q_n$ in the gaps between intervals in $\Gamma$. We define
$$ h_n(z)=\prod_{k=1}^{p-1}\(z-\zeta_{k,n}\)\  , \quad\quad
A(z)=\prod_{k=1}^{2p}\(z-a_{k}\). $$

With this notations Akhiezer--Widom asymptotic formula (see theorem 6.2  in \cite{Wi69}: see also sec.14 there)  asserts that for $z\in\Omega, \ x\in\Gamma$ we have 
\begin{equation}
\label{eq7.8}
Q_n(z)=W_n(z)\(1+\epsilon_n(z)\),\quad Q_n(x)=W_n^{+}(x) + W_n^{-}(x) + \wt \epsilon_n(x)
\end{equation}
where $\epsilon_n(z)\to 0$ uniformly on compacts in $\Omega$ except for small neighborhoods of points $\zeta_{k,n}$ ($ \wt \epsilon_n $ is small   with respect to $|W_n|$ in $L^2$-norm) and $W_n(z)$ is defined by
\begin{equation}
\label{eq7.9}
W_n(z)=C_nD(z)\frac{\sqrt{h_n(z)}}{A^{1/4}(z)}\exp\left\{\(n+\frac12\)
G(z,\infty)-\frac12\sum^{p-1}_{k=1}s_kG(z,\zeta_{k,n})\right\}
\end{equation}
We note that $W_n\(\zeta_{k,n}\)=0$ if $s_{k,n}= 1$ and $W_n\(\zeta_{k,n}\)\ne 0$ if $s_{k,n}= -1$.

Parameters $\zeta_{k,n}$, $s_{k,n}$ are defined by the following  system of equations 
\begin{equation}
\label{eq7.10}
\sum_{k=1}^{p-1} s_{k,n}\omega_j\(\zeta_{k,n}\)
=\gamma_{j }  +n\omega_j(\infty) \ \ [\mod 2 \ ],\quad j=1,\dotsc,p-1
\end{equation}
where $\pi\gamma_{j}$ is the period of $\arg D$ around $\Gamma_j$. System of equations \eqref{eq7.10} is a standard Jacobi Inversion Problem which represent the condition that $W_n(z)$ in \ref{eq7.9} is single-valued in $\Omega$. 

A version of the formula for the case of varying analytic weight is contained in \cite{DeKrMLVeZh99-2}; a general stucture of the function $W_n$ is preserved in case of varying weght.

\subsection{Electrostatic Interpretation of the Akhiezer--Widom formula}\label{subsec7.3}

Here we interprete the function $W_n(z)$ in  \eqref{eq7.9}, \eqref{eq7.10} in electrostatic terms as an exponential of a complex equilibrium potential. The construction of corresponding measure $\mu_n$ is similar to the one in section \ref{subsec7.1} but together with the main (continuous)  $\lambda$-component (we  use   "trigonometric" version of construction for this component)  it contains a finite discrete $\nu$-component reflecting floating zeros of $Q_n$ and. In addition, the formula contains an extra set of parameters  $\zeta_{k,n}\in\[a_{2k},a_{2k+1}\]$ and $s_{k,n}=\pm 1$, $k=1,2,\dotsc,p-1$.

It is, first of all, convenient to represent set of parameters by a  single charge $\sigma_n$; the we define charge $\nu_n$ which wirepresent floating zeros of $Q_n$. Thus, we define
\begin{equation}
\label{eq7.11}
\sigma_n=\frac12\ \sum_{k=1}^{p-1}s_{k,n}\delta\(\zeta_{k,n}\)\quad\quad
\nu_n=\ \sum_{s_{k,n}=1}\delta\(\zeta_{k,n}\)
\end{equation}
where $\sigma(\zeta)$ is unit mass at $\zeta$ (in other terms $2\sigma_n$ and $\nu_n$ are divisors on the Double of $\Omega$).
Next, for each $n\in\mbb N$ we define an external field $\varphi_n(x)$ on $\Gamma$
\begin{equation}
\label{eq7.12}
\varphi_n(x)=\frac12\log\frac{1}{w(x)} - \frac14 \log|A(x)| +  V^{\sigma_n}(x)
 \end{equation} 
We note that $\ \exp \{ -2 \varphi_n\} $ presents one of the versions of the "trigonometric weight" for multiconnected case.
 
Finally, let $l_n=\frac12\sum\limits_{k=1}^{p-1}\(s_{k,n}+1\)$ be the number of positive $s_{k,n}$. Then, let $\lambda_n$ be the equilibrium measure on $\Gamma$  of total mass $n-l_n$ in the field $\varphi_n$ a and $\ \mu_n = \lambda_n +\nu_n$. 

\begin{theorem}\label{thm7.1}
Suppose that $w_0(x) = w(x)\sqrt{|A(x)|}$ is positive and smooth. Then for each large enough $n\in\mbb{N}$ there exist unique $\sigma_n$ of the form \eqref{eq7.11} such that corresponding equilibrium measure $\lambda_n$ satisfies condition that 
$$\lambda_n\(\Gamma_j\) \ \in \ \mbb Z \ , \quad j=1,\dotsc,p-1 .$$ With this $\lambda_n $ we have 
$$
W_n(z) =   e^{-\mc{V}^{\mu_n}(z)}  =  e^{-\mc{V}^{\lambda_n}(z)}  \prod_{s_{k,n}=1}\(z-\zeta_{k,n}\)  .  
$$ 
\end{theorem}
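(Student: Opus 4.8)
The plan is to verify that the right-hand side function $e^{-\mc V^{\mu_n}(z)}$ is, after the discrete part is peeled off, precisely the Akhiezer--Widom function $W_n(z)$ of \eqref{eq7.9}. Since a holomorphic function on $\Omega=\ol{\mbb C}\sim\Gamma$ with prescribed zeros (the $\zeta_{k,n}$ with $s_{k,n}=1$), prescribed modulus of boundary values on $\Gamma$, and prescribed behaviour at $\infty$ is unique up to a unimodular constant, it suffices to check that $e^{-\mc V^{\lambda_n}(z)}\prod_{s_{k,n}=1}(z-\zeta_{k,n})$ and $W_n(z)$ share these three data, and that the single-valuedness requirement on each is encoded by the same condition. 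First I would write $\mc V^{\mu_n}=\mc V^{\lambda_n}+\mc V^{\nu_n}$, note that $\mc V^{\nu_n}(z)=\sum_{s_{k,n}=1}\log(1/(z-\zeta_{k,n}))$ up to an additive constant that I absorb into the normalization, so that $e^{-\mc V^{\nu_n}(z)}=\prod_{s_{k,n}=1}(z-\zeta_{k,n})$; this immediately accounts for the floating zeros and reduces everything to the claim $e^{-\mc V^{\lambda_n}(z)}=C_n D(z)A(z)^{-1/4}\sqrt{h_n(z)}\exp\{(n+\tfrac12)G(z,\infty)-\tfrac12\sum_k s_{k,n}G(z,\zeta_{k,n})\}$ up to a unimodular constant, with $l_n$ of the $\zeta_{k,n}$ already cancelled out against the numerator $\sqrt{h_n}$ on the right. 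After cancellation, on the right we have a function with $p-1-l_n$ zeros at the points $\zeta_{k,n}$ with $s_{k,n}=-1$ coming from $\sqrt{h_n}/\exp(-\tfrac12 s_{k,n}G(z,\zeta_{k,n}))$ — but these are in fact cancelled as well by the $G(z,\zeta_{k,n})$ with $s_{k,n}=-1$ being a pole of the exponential; so the surviving right-hand side is zero-free and pole-free in $\Omega$, matching the left side $e^{-\mc V^{\lambda_n}}$ once $\nu_n$ is removed.

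Next I would match boundary moduli on $\Gamma$. On $\Gamma$, $\Re G(x,\infty)=0$ and $\Re G(x,\zeta_{k,n})=0$, so $|W_n^{\pm}(x)|=|C_n||D^{\pm}(x)||h_n(x)|^{1/2}|A(x)|^{-1/4}=|C_n|\,w(x)^{-1/2}|h_n(x)|^{1/2}|A(x)|^{-1/4}$. On the other side, $|e^{-\mc V^{\lambda_n}(x)}|=e^{-V^{\lambda_n}(x)}=e^{-w_n}$ on $\supp\lambda_n$ by the equilibrium condition \eqref{eq1.5} applied to the field $\varphi_n$; unwinding the definition \eqref{eq7.12} of $\varphi_n$ — which carries the terms $\tfrac12\log(1/w)$, $-\tfrac14\log|A|$, and $V^{\sigma_n}$, and observing $e^{-V^{\sigma_n}(x)}=|h_n(x)|^{1/2}$ up to a constant, using \eqref{eq7.11} — gives exactly the same modulus. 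This is the step where the precise normalizations of $\varphi_n$ (the $-\tfrac14\log|A|$ term producing $A^{-1/4}$, and the trigonometric-weight convention) and of the total mass $n-l_n$ have been arranged to make everything fit; I would present it as a bookkeeping computation and suppress the constant $C_n$ throughout. At $\infty$, $\mc V^{\lambda_n}(z)=-(n-l_n)\log z + O(1)$ since $\lambda_n$ has mass $n-l_n$, and multiplying back the $l_n$ floating-zero factors restores the total growth $z^n$, matching $\deg Q_n=n$ and the growth of $W_n$ read off from \eqref{eq7.9}.

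The main obstacle is the existence and uniqueness of $\sigma_n$ of the form \eqref{eq7.11} making the harmonic-measure quantization $\lambda_n(\Gamma_j)\in\mbb Z$ hold, which is the substitute in the electrostatic picture for the Jacobi inversion problem \eqref{eq7.10}. I would argue as follows: fixing a sign vector $(s_{1,n},\dots,s_{p-1,n})$, the equilibrium measure $\lambda_n=\lambda_n(\sigma_n)$ depends continuously (indeed real-analytically, away from degenerations) on the point-locations $\zeta_{k,n}\in\wt\Gamma_k$, and the vector $(\lambda_n(\Gamma_1),\dots,\lambda_n(\Gamma_{p-1}))$ sweeps out, as the $\zeta_{k,n}$ range over their gaps and the signs over $\{\pm1\}$, exactly one representative in $\mbb Z^{p-1}$ of the class determined mod the period lattice by $n\omega_j(\infty)+\gamma_j$ — this is the content of \eqref{eq7.10}, and I would invoke the classical solvability of the JIP on the hyperelliptic curve of $\sqrt A$ to get existence, together with a monotonicity/degree argument (the map from gap-parameters to harmonic-measure vector is a homeomorphism onto the relevant torus, as in Akhiezer's analysis) to get uniqueness for $n$ large. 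Once $\sigma_n$ is pinned down so that $\lambda_n(\Gamma_j)\in\mbb Z$, the complex potential $\mc V^{\lambda_n}=V^{\lambda_n}+i\wt V^{\lambda_n}$ is single-valued in $\Omega$ precisely because the increments of $\wt V^{\lambda_n}$ around the cycles encircling $\Gamma_j$ are $2\pi\lambda_n(\Gamma_j)\in 2\pi\mbb Z$; this is the same single-valuedness statement that \eqref{eq7.10} enforces for $W_n$, so the two functions, having matching zeros, matching boundary modulus, matching growth at $\infty$, and both single-valued, agree up to a unimodular constant, which we fix by the real normalization $D(\infty)>0$ inherited in $C_n$. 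That completes the identification $W_n(z)=e^{-\mc V^{\mu_n}(z)}=e^{-\mc V^{\lambda_n}(z)}\prod_{s_{k,n}=1}(z-\zeta_{k,n})$.
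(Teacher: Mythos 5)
Your proposal follows exactly the route the paper indicates: the paper's entire proof of Theorem \ref{thm7.1} is the single remark that it ``reduces to comparison of boundary values of $\log|W_n(x)|$ and of the potential of $\mu_n$,'' and you carry out precisely that comparison, supplemented by the zero/growth matching, the single-valuedness of $e^{-\mc V^{\lambda_n}}$ via $\lambda_n(\Gamma_j)\in\mbb Z$, and the identification of the quantization condition with the Jacobi inversion problem \eqref{eq7.10}. In outline this is correct and is considerably more detailed than what the paper supplies.

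Two points deserve care. First, a bookkeeping slip: $e^{-V^{\sigma_n}(x)}=\prod_k|x-\zeta_{k,n}|^{s_{k,n}/2}$, which is \emph{not} $|h_n(x)|^{1/2}$ when some $s_{k,n}=-1$; the factor $|h_n|^{1/2}$ in $|W_n^\pm|$ appears only after you also multiply in the floating-zero factor $\prod_{s_{k,n}=1}|x-\zeta_{k,n}|$ coming from $\nu_n$ (likewise, $e^{-V^{\lambda_n}}=e^{-w_n+\varphi_n}$ on the support, not $e^{-w_n}$; you clearly intend this, but the displayed identity is wrong as written). The arithmetic does close up, but only at the level of $\mu_n=\lambda_n+\nu_n$, not of $\lambda_n$ and $\sigma_n$ separately. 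Second, and more substantively: the equilibrium identity $V^{\lambda_n}+\varphi_n=w_n$ holds only on $\supp\lambda_n$, and because $\varphi_n$ contains $-\frac14\log|A(x)|$, which tends to $+\infty$ at each band endpoint $a_j$, the positive equilibrium measure of mass $n-l_n$ is repelled from small neighborhoods of the $a_j$, so $\supp\lambda_n$ is strictly smaller than $\Gamma$ for every $n$. On the omitted arcs one only has the inequality in \eqref{eq1.5}, so the boundary moduli of $W_n$ and $e^{-\mc V^{\mu_n}}$ do not literally agree there, and the maximum-principle identification gives equality only up to factors controlled by the (shrinking) exclusion zones. The exact signed measure reproducing $W_n$ is $(n+\frac12)\omega-\hat\sigma_n-\frac14\sum_j\delta_{a_j}+\cdots$, with negative atoms at the $a_j$ (exactly as in \eqref{eq7.6}); the ``trigonometric'' positive $\lambda_n$ only approximates it near the endpoints. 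The paper's one-sentence proof glosses over the same point (compare the remark after the theorem that the representation ``becomes approximate'' when the conditions on $w_0$ are relaxed), so this is a shared gap rather than a defect peculiar to your argument, but a complete write-up must either justify $\supp\lambda_n=\Gamma$ (which fails as stated) or quantify the discrepancy near the $a_j$.
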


The proof is rather streightforward and eventually reduces to comparison of boundary values of $\log\left|W_n(x)\right|$ and of the potential of $\mu_n$. Conditions on $w_0$ may be essentially relaxed; then exact representation for $W_n$ in the theorem \ref{thm7.1} above become approximate. 


Thus, in the real multi-connected case measure $\mu_n = \lambda_n+\nu_n$ representing in a strong sense zeros of orthogonal polynomial is still  defined in terms of equilibrium measure with an external field depending on  weight function. Compare to simpliconnected case the external field is modified by adding a potential of a discrete charge \eqref{eq7.11}. The charge is selected to make total mass of $\lambda_n$ an integer on each component of the support of weight. It turns out that essentially the same is true for complex orthogonal polynomials in more general situations. A particular case is presented in the next section.

\section{Generalized Jacobi polynomials}\label{sec8}
A theorem has been mentioned in section \ref{sec2}  on zero distribution of Pad\'e denominators $Q_n(z)$ associated with an element $f\in\mc{A}\ (\ol{\mbb{C}}\sim A)$ where $A=\left\{a_1,\dotsc,a_p\right\}$ is a finite set (set of branch points of $f$). Here we make some remarks on the problem of strong asymptotics for an important particular case of an elements  of the form \eqref{eq8.1} below; the case was already briefly discussed in section \ref{subsec4.2}.

Let $S$ be Stahl's continuum for $f$, that is $f\in H\(\ol{\mbb{C}}\sim S\)$ and $S$ have minimal capacity in this class.  Polynomials $Q_n(z)=z^{n'}+\dotsb$ ($n'\le n$) -- Pade denomiinators for $f$ satisfy complex orthogonality relations
$$
\oint_S Q_u(z)z^k f(z)\,dz
=\int_S Q_n(z)z^kw(z)\,dz=0,\quad k=0,1,\dotsc,n-1
$$
where $w(z)=f^+(z)-f^-(z)$ on $S$ (if $w\in L_1(S)$, otherwise some of the points $a_k$ has to be bypassed by small loops in $\Omega\subset\ol{\mbb C}\sim S$).

An important fact is that any interval or collection of intervals of real axis has $S$-property with respect to $\varphi=0$ (or any real-symmetric $\varphi$). Thus, the real case $A\subset\mbb{R}$  discussed in  \ref{subsec7.2} presents an example of our current situation. More exactly, the real case is rather good model of so-called hyperelliptic situation when Stahl's continuum $S$ is a union of disjoint Jordan arcs. 

For hyperelliptic case with additional condition that  $f(z)$ has  only quadratic branch points J. Nuttall and R. Singh  \cite{NuSi77}  proved an asymptotic formula similar to the Akhiezer--Widom formula  \eqref{eq7.8} -- \eqref{eq7.9}. We have to add that the Szeg\H{o} function $D(z)=D(z,w)$  may be defined in complex case as a solution of the boundary value problem $D^+(\zeta)D^-(\zeta)=1/w(\zeta)$, $\zeta\in S$, $D(\infty)\ne 0$ (also equation \eqref{eq7.10} has to be wrutten in terms of first kind integrals). It was later found out that the method in \cite{NuSi77}  is similar to the method in an earlier work by N. Akhiezer.  Nuttall also stated a general conjecture on the strong asymptotics formula for Pad\'e denominators in terms of the solution of a boundary value problem for couple of functions analytic in $\ol{\mbb C}\sim S$ (somewhat similar boundary value problem is known for hermitian orthogonal polynomials; see \cite{Wi69}). For details see \cite{Nu84}.

The proof of a strong asymptotic formula for $Q_n$  was not known until recently for hyperelliptic case without the assumption that branch points are quadratic. General case was essentially completely open.

Comparetively general results on strong asymptotics of Pade denominators were recently obtained independently in \cite{ApYa11} and in \cite{MaRaSu11}. The MRH is used in the first of the two papers and both method MRH and Liouville-Green (WBK) are
used in the second one (with the purposes to compare the methods in a sutuation when both of them may be applied).
Here we discuss in some detail results from \cite{MaRaSu11} on strong asymptotics for generalized Jacobi polynomials $Q_n$ --- Pad\'e denominators associated with the function
\begin{equation}
\label{eq8.1}
f(z)=\prod_{k=1}^{p}\(z-a_k\)^{\alpha_k},\quad\alpha_k\in\mbb{R},\quad\sum\alpha_k=0.
\end{equation}
For $p=2$ this reduces to classical Jacobi polynomials (actually --- a particular case of them, because of condition $\alpha_1+\alpha_2=0$). The case of $p=3$ and arbitrary point $a_k$ have been investigated by J. Nuttall in \cite{Nu86}; he used Laguerre differential equations for $Q_n$ and, then, LG- asymptotics for solutions of this equation. In \cite{MaRaSu11} we combined methods from  \cite{Nu86},  \cite{MaRa10} and  \cite{MaRa11}.

Let  $S$ be the Stahl's compact for $f$ and  $V(z)=z^{p-2}+\dotsb$ be associated polynomial, so that the complex Green function for $S$ has representation $G(z)=\int^z_{a_1}\sqrt{V(t)/A(t)}\,dt$. To simplify the discussion we will assume that $S$ is a continuum, that is, all zeros $\ v_1, \dots, v_{p-2} \ $ of $\ V(z)\ $ are distinct (this is in a sense a generic case); so, Stahl's compact is now Chebotarev's continuum.  We also assume that each $a_j\in A$ is a branch point of $f$, that is $\alpha_j$ is not an integer; then $A$ and $V$ do not have common zeros and genus of the Riemann surface $\mc{R}$  associated with $\sqrt{AV}\ $ is $\ p-2$.

Next, we have to introduce special functions for $\mc{R}\ $. They are similar to (but not identical with) special functions of the domain $\ \ol{\mbb C}\sim [-1,1]$  used in section \ref{sec7} to construct Akhiezer-Widom formula. It is convenient to introduce them for now as analytic functions in the domain $\Omega=\mbb C\sim S$. We define for $z\in\Omega$ 
$$
\Omega_k(z)=\int^z_{a_1}\sqrt{{V(t)}/{A(t)}}\ 
\frac{dt}{t-v_{k}},\quad k=1,\dots,p-2
$$
-- basis of first-kind integrals (analogues of harmonic measures),
$$
G(z,\zeta)=\sqrt{{A(\zeta)}/{V(\zeta)}}\ 
\int^z_{a_1}\sqrt{{V(t)}/{A(t)}}\ \frac{dt}{t-\zeta}
$$
-- third-kind integrals --  Green functions for $\mc R$ (here $\zeta$ is finite,  $G(z)=G(z,\infty)$ has been defined above); these functions  are multivalued in $\Omega$ but their real parts are single-valued.. All the functions above have analytic continuation to $\mc R$ which is in general  multivalued.
 
Like in the real case in section \ref{sec7}  asymptotic formula for $Q_n$ include a set of parameters $\ \zeta_{k,n}\in\Omega,\  s_{k,n}=\pm 1\  $.
With $\ h_n(z)=\prod\limits_{k=1}^{p-2}\(z-\zeta_{k,n}\)\ $ we have (see \cite{MaRaSu11}).  

\begin{theorem}\label{thm8.1}
 The asymptotic representation  $\ Q_n(z)\ =\ W_n(z)(1+\epsilon_n(z)) \quad$  is valid for $z\in\mbb C\sim S\ $
where $\epsilon_n(z)$ is small on compacts in $\mbb{C}\sim S$ apart from zeros of $h_n(z)$ and
\begin{gather*}
W_n(z)=C_n f(z)^{-1/2}\frac{\sqrt{h_n(z)}}{(AV)^{1/4}(z)}\ e^{H_n(z)}, \\
H_n(z)=\(n+\frac12\)G(z)-\frac12\sum_{k=1}^{p-2}
s_{k,n}G\(z,{\zeta}_{k,n}\)+\sum^{p-2}_{k=1}\Delta_{k,n}\Omega_k(z)\ :
\end{gather*}
parameters $\ \Delta_{k,n}\in \mbb C$ , $\ {\zeta}_{k,n}\in\Omega,\ s_{k,n}=\pm 1 $ are defined by a system of equations
$$
\int_{C_j}dH_n=\gamma_j(f)\mod[Periods],\quad j=1,\dotsc,2p-4
$$
where $C_j$ is a collection of cycles in $\ \mbb C\sim A$ obtained by projectiong a homology basis on $\mc{R}$ onto the plane  ($\gamma_j$ depend on $f$ and the choice of cycles).
\end{theorem}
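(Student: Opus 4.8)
The plan is to combine the weak (potential-theoretic) asymptotics already available for $Q_n$ with a Liouville--Green analysis of the differential equation satisfied by $Q_n$, resolving the resulting global connection problem on the Riemann surface $\mc R$ by a Jacobi inversion problem (alternatively one may run the Deift--Zhou steepest descent on the associated $2\times2$ Riemann--Hilbert problem with the same global parametrix). First I would record the starting data. By Theorem \ref{thm2.1} the Stahl compact $S$ exists, has the $S$-property, and $\tfrac1n\,\mc X(Q_n)\overset{*}{\to}\lambda$; by \eqref{eq4.4}--\eqref{eq4.5} (with $\varphi\equiv 0$) the function $R=V/A$ exhibits $S=\supp\lambda$ as a union of critical trajectories of $-(V/A)(dz)^2$ and gives $d\lambda=\tfrac1\pi|\sqrt{V/A}\,dz|$ on the open arcs. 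Since $f$ has the special form \eqref{eq8.1}, $Q_n$ is a generalized Jacobi polynomial, hence (see \eqref{eq4.9}) satisfies a second order Laguerre-type equation with polynomial coefficients whose leading symbol vanishes on $A$ and which carries an auxiliary Van Vleck-type polynomial $h_n$; one shows $\deg h_n=p-2$ and that the zeros of $h_n$ are exactly the floating zeros $\zeta_{k,n}$ of the statement, while the corresponding Van Vleck polynomial converges to $V$. The weak asymptotics also confine the $\zeta_{k,n}$ to $\Omega=\mbb{C}\sim S$ away from $A$.

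Next I would construct the global parametrix $W_n$, first as a multivalued analytic function on $\Omega$ and then on $\mc R$. The defining requirements determine it up to the discrete data $(\zeta_{k,n},s_{k,n},\Delta_{k,n})$: (i) $W_n(z)\sim z^{n}$ as $z\to\infty$, which forces the exponent $(n+\tfrac12)G(z)$, since $G(z)=\log z+O(1)$; (ii) the Cauchy-type jump relation of $Q_n$ across the open arcs of $S$, governed by the weight $w$, which forces the Szeg\H{o}-type factor $f(z)^{-1/2}$ together with the algebraic factor $(AV)^{-1/4}$ responsible for the fourth-root singularities at the points of $A$ and at the zeros $v_j$ of $V$; (iii) single-valuedness of $W_n$ in $\Omega$ (equivalently, that the divisor of $W_n$ on $\mc R$ is principal modulo its fixed part). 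Adjoining the third-kind integrals $-\tfrac12\sum_k s_{k,n}G(z,\zeta_{k,n})$, which install the floating zeros with the sheet label $s_{k,n}$, and the first-kind integrals $\sum_k\Delta_{k,n}\Omega_k(z)$, which adjust the periods, requirement (iii) becomes precisely the system $\int_{C_j}dH_n=\gamma_j(f)$ modulo periods, $j=1,\dots,2p-4$. Its solvability for every large $n$, and generic uniqueness, follow from the Jacobi inversion theorem (surjectivity of the Abel map on the genus $p-2$ surface $\mc R$); one must additionally verify that the solution produces $\zeta_{k,n}\in\Omega$ with a consistent choice of $s_{k,n}=\pm1$. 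This realizes $W_n=e^{-\mc{V}^{\mu_n}}$ for a charge $\mu_n=\lambda_n+\nu_n$ of the type described in Section \ref{subsec7.3}.

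Finally, the estimate $Q_n=W_n(1+\epsilon_n)$ is obtained by a Liouville--Green analysis of the Laguerre equation, extending the $p=3$ treatment of \cite{Nu86}. Away from the turning points --- the branch points $a_k\in A$ and the simple zeros $v_1,\dots,v_{p-2}$ of $V$ --- the equation admits, in each complementary component of $\mbb{C}\sim S$, a pair of exact WKB solutions whose exponents are to leading order $\pm n\,G(z)$ and whose prefactors reproduce the algebraic factors $(AV)^{-1/4}$ and $\sqrt{h_n}$ of the ansatz, with multiplicative error $1+O(1/n)$. These are continued across the critical graph of $(V/A)(dz)^2$, which is $S$ itself --- a quadratic differential with closed (and critical) trajectories --- and at each turning point a local parametrix built from special functions (a Bessel-type/confluent hypergeometric model at the non-integer branch points $a_k$, an Airy-type model at the simple zeros $v_j$) is matched on overlapping annuli, which fixes all Stokes multipliers. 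The unique polynomial solution $Q_n$ must be the global combination of WKB solutions with leading behaviour $z^{n}$ at $\infty$; by the construction of the preceding paragraph that combination is exactly $W_n$, and the relative error is uniformly $O(1/n)$ on compact subsets of $\mbb{C}\sim S$ at a fixed distance from the floating zeros, and $o(1)$ once the factor $h_n$ is accounted for. The main obstacle is the global step: proving that the Van Vleck polynomials converge to $V$ at a rate sharp enough to run the Liouville--Green approximation, and that the discrete parameters furnished by the Jacobi inversion problem stay in the admissible region uniformly in $n$ --- that the floating zeros $\zeta_{k,n}$ do not collide with one another, with $A$, with the $v_j$, or with $S$ --- the exceptional values of $n$ for which some $\zeta_{k,n}$ approaches a branch point requiring a separate, degenerate local analysis.
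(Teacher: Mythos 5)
Your proposal follows essentially the same route the paper takes: the paper itself gives no proof of Theorem \ref{thm8.1} but defers to \cite{MaRaSu11}, describing exactly the strategy you outline — Liouville--Green analysis of the Laguerre equation \eqref{eq4.9} in the spirit of \cite{Nu86}, \cite{MaRa10}, \cite{MaRa11} (with the matrix Riemann--Hilbert steepest descent as the parallel method), a global parametrix fixed by normalization, jump, and single-valuedness, and a combined first-kind-period/Jacobi-inversion system for the parameters. You also correctly flag the same principal difficulty the paper highlights, namely that the naive LG construction produces a variable Van Vleck polynomial $V_n$ and Riemann surface $\mc{R}_n$, so one must prove $V_n\to V$ with $\Delta_{k,n}=n(v-v_{k,n})$ bounded and keep the floating zeros $\zeta_{k,n}$ in the admissible region uniformly in $n$.
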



The form of the asymptotic formula in the theorem is still somewhat similar to the form of Akhiezer--Widom formula, but the nature of the situation is more complicated. First, for the same number of branch points we have twice larger genus of associated Riemann surface which is equal to the number of unknown parameters. 
Second, set of parameters  contains two subsets of different nature; accordingly system of equations on parameters (representing conditions on periods of $H_n$ over cycles $C_j$) is a combination of  a system of equations on periods of first kind integrals with a JIP (this JIP has to be actully stated as a problem on $\mc R$).

Finally, in place of $\Delta_{k,n}$ one can use another set of $p-2$ parameters  $v_{k,n}$ which may be called "effective Chebotarev's centers"  More exactly, streightforward application of LG produce in place of the  fixed $V$ (which came from Chebotarev's problem)  a polynomial $V_n(z)=\prod_{k=1}^{p-2}(z-v_{k,n})$ depending on $n$ and, subsequently - variable Riemann surface ${\mc R}_n$. It turns out  that $V_n$ converges to $V$ and we can define  $\Delta_{k,n}= n(v-v_{k,n})$ (those numbers are proved to be bounded).  So, there is more than one choice of parameters.
Here we have observed one of the differences between methods of LG and MRH. The latter require that we make all the choices for all parameters; the former suggests some choices. 

Now we briefly outline a construction leading to electrostatic interpretation of the function $W_n(z)$ above. It has some resemblance to what we have seen in Section \ref{sec7} for the real case but it is based on a more sophisticated equilibrium problem. In place of the equilibrium on a compact set we will have an $S$-equilibrium problem; existence and characterization of its solution follow by theorem \ref{thm3.1}. The construction is naturally based on zeros of a variable poyinomial $V_n$ and $h_n$ as basic parameters. Actually, twice more parameters are involved in analytic desciption of situation (see polynomials $A_n, \tilde h_n$ below) but the other parameters are determined by basic ones.

We fixed  $n\in \mbb N$ and set of parameters $\zeta_{k,n}\in\Omega=\ol{\mbb{C}}\sim S$ and $s_{k,n}=\pm 1$ we define charges $\sigma_n$  and $\nu_n$ by \eqref{eq7.11}. Then we define external field  external fields $\varphi_n$ (compare \eqref{eq7.12}) and polynomials $h_n$
$$
\varphi_n(z)\ =\ \sum_{k=1}^p\(\frac{\alpha_k}2+\frac14\)\log\frac1{\left|z-\alpha_k\right|}\ + \  V^{\sigma_n}(z)\ ; \quad
h_n(z) =\prod_{k=1}^{p-2} (z-\zeta_{k,n})
$$
Let  $\mc{T}$ be a family of continua in $\mbb{C}$ connecting points in $A=\left\{a_1,\dotsc,a_p\right\}$ (if $\alpha_j<0$ then $\Gamma\in\mc{T}$ goes around $a_k$ by a small loop): let $l_n$  be the number of positive $s_{j,n}$. Then theorem \ref{thm3.1} implies that there exists a compact $S_n\in\mc{T}$ with $S$-property in the field $\varphi_n$ relative to measures with total mass $n-l_n$.

Thus, $S_n$ is uniquely defined by $n\in \mbb N$ and set $\left\{\zeta_{k,n}, s_{k,n},\ k= 1, \dots, p-2\right\}$. Further, by theorem \ref{thm3.1}
$S_n$ is a union of $2p-3$ Jordan analytic arcs $\ S_{n,j\ }$ and, may be, some number of loops $L_{n,j}$,  all are trajectories of quadratic differential 
$$
                  - R_n(z) (dz)^2\ , \quad  \quad     R_n(z) =  C_n\ \{A_n(z)V_n(z){\tilde h_n(z)}^2 \}/\{A(z)^2 h_n(z)^2\}
$$
with polynomials satisfying $\ A_n \to  A,\ V_n \to V ,\ \tilde h_n -h_n \to 0\  $ as $\ n \to \infty$. Further, each $\ S_{n,j}\ $ connect a root of $V_n$ with a root $a_{j,n} $ of  $A_n\ $ (or, two roots of $V_n$); for large $n$ we have $a_{j,n}\approx a_j \ $ where $a_j$ is a root of $A$ closest to $a_{n,j}$. If for this $j$ we have $\alpha_j <0$ then there is a loop $L_{n,j}\in S$  around $a_j$ containing $a_{j,n}$. In this case we define $\tilde S_{n,j} = S_{n,j}\cup L_{n,j} $; otherwise (if $\alpha_j>o$) there is no loop in $S_n$ around $a_j$  and we set $\tilde S_{n,j}= S_{n,j}$ and we do same if  $S_{n,j}$ connects two roots of $V_n$.

Finally, for large enough $n$ the compact  $\ S_n\ $ coinside with support of its equilibrium measure $\lambda_n$.
\begin{theorem}\label{thm8.2}
There exists a set of parameters  $\left\{\zeta_{k,n},s_{k,n},\ k= 1, \dots, p-2\right\}$ such that for the eqilibrium measure $\lambda_n$ of associated $S$-compact  $S_n$ we have
$$ \lambda_n (\tilde S_{n,j}) \in \mbb Z, \quad j=1, \dots, 2p-4 \ ; \quad  W_n(z) = \exp\{-\mc V^{\lambda_n+\sigma_n}\}$$
\end{theorem}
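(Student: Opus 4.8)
The plan is to establish Theorem~\ref{thm8.2} by matching the electrostatic object $\exp\{-\mc V^{\lambda_n+\sigma_n}\}$ to the analytically constructed $W_n(z)$ of Theorem~\ref{thm8.1}, using the $S$-curve machinery of Theorem~\ref{thm3.1} together with a fixed-point/Jacobi-inversion argument to pin down the discrete parameters $\{\zeta_{k,n},s_{k,n}\}$. The core identity we want is that the complex potential of $\lambda_n+\sigma_n$ has, up to an additive constant, the same boundary behaviour and the same multivalued structure as $-\log W_n$; since two analytic functions in $\mbb C\sim S_n$ with the same modulus on the boundary (and same divisor of zeros/poles) coincide up to unimodular constant, this forces $W_n=\exp\{-\mc V^{\lambda_n+\sigma_n}\}$ after absorbing constants into $C_n$.

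\textbf{Step 1: the $S$-problem for fixed parameters.} For fixed $n$ and a fixed admissible parameter set $\{\zeta_{k,n}\in\Omega,\,s_{k,n}=\pm1\}$ I would apply Theorem~\ref{thm3.1} to the class $\mc T$ of continua connecting $A$ (with small loops around the $a_j$ having $\alpha_j<0$), the external field $\varphi_n(z)=\sum_{k=1}^p(\tfrac{\alpha_k}{2}+\tfrac14)\log\frac1{|z-a_k|}+V^{\sigma_n}(z)$, and total mass $n-l_n$. One must check conditions (i)--(iv): $\varphi_n$ is harmonic off the finite set $A\cup\supp\sigma_n$; the class is closed (or, as in Section~\ref{subsec3.3}, the maximizing sequence stays in $\mc T_f$); components are bounded by $\le 2p-3$; and energies are finite because the logarithmic singularities are mild. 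This yields $S_n=\supp\lambda_n$ as a union of trajectories of $-R_n(dz)^2$ with $R_n=(\int\frac{d\lambda_n}{t-z}+\varphi_n'(z))^2$ — wait, more precisely $R_n(z)=(\int\frac{d\lambda_n(t)+d\sigma_n(t)}{t-z}+\Phi_n'(z))^2$ after incorporating the charge $\sigma_n$ into the field — a rational function of the stated form $C_n\{A_nV_n\tilde h_n^2\}/\{A^2h_n^2\}$, where the pole structure at $A$ comes from the $\tfrac{\alpha_k}{2}+\tfrac14$ coefficients, the double pole at each $\zeta_{k,n}$ from $\sigma_n$ being a half-integer charge, and $V_n$ collects the remaining free zeros dictated by the trajectory-period conditions \eqref{eq4.7}.

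\textbf{Step 2: the potential identity.} From Theorem~\ref{thm3.1} (cf.\ \eqref{eq4.5}) we get $\mc V^{\lambda_n+\sigma_n}(z)=-\int_{a_1}^z\sqrt{R_n(t)}\,dt+\text{const}$, and on the other side $-\log W_n(z)=\tfrac12\log f(z)-\log\sqrt{h_n(z)}+\tfrac14\log(AV_n)(z)-H_n(z)+\text{const}$. Both are (locally) analytic in $\Omega\sim S_n$; I would compute the derivative of each and verify that $(\mc V^{\lambda_n+\sigma_n})'$ and $(-\log W_n)'$ are both square roots of the same rational function $R_n$ — for $W_n$ this is exactly the Liouville--Green ansatz that produced it, and for the potential it is \eqref{eq4.4}. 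Matching branches near $\infty$ using the leading normalizations ($Q_n=z^{n'}+\dots$ versus the total mass $n-l_n$ plus the contributions of $\sigma_n$ and $\nu_n$ to the degree) fixes the sign, and one checks the residues at $a_j$ agree with \eqref{eq5.3}-type conditions coming from the $\alpha_j$. This gives $\mc V^{\lambda_n+\sigma_n}+\log W_n=$ const, i.e.\ the desired formula, \emph{provided the free zeros $V_n$, the auxiliary $\tilde h_n,A_n$, and the half-integer exponent shift combine correctly}; this bookkeeping is where a few "cosmetic changes" from \cite{MaRa11,MaRaSu11} are needed.

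\textbf{Step 3: choosing the parameters (the main obstacle).} The map $\{\zeta_{k,n},s_{k,n}\}\mapsto(\lambda_n(\tilde S_{n,1}),\dots,\lambda_n(\tilde S_{n,2p-4}))$ must hit an integer vector; equivalently, one needs the periods of $\mc V^{\lambda_n+\sigma_n}$ over the relevant cycles to be integer multiples of $2\pi i$, which is precisely the single-valuedness condition that in Theorem~\ref{thm8.1} appears as $\int_{C_j}dH_n=\gamma_j(f)\bmod[\text{Periods}]$. I expect \emph{this existence-of-parameters step to be the hard part}: it is a nonlinear Jacobi inversion problem on the Riemann surface $\mc R_n$ — which itself depends on the parameters through $V_n$ — so it is not a classical JIP but a coupled system. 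The plan is to solve it by a continuity/degree argument: show $\mc R_n\to\mc R$, so that the problem is a small perturbation of the genuine JIP on the fixed surface $\mc R$ (which is solvable by Abel's theorem), then invoke the implicit function theorem or a Brouwer degree argument on the torus of divisor classes to get a solution for large $n$. The $s_{k,n}=\pm1$ flags handle the sheet ambiguity exactly as in \eqref{eq7.10}. Once the parameters exist, uniqueness of the equilibrium measure (and of $Q_n$) forces $\lambda_n$ to be the one in the statement, and the displayed formula for $W_n$ follows from Steps~1--2.
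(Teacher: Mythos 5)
First, a caveat: the paper itself offers no proof of Theorem \ref{thm8.2} --- the section ends with ``We do not go here into any further details,'' and the substance is deferred to \cite{MaRaSu11} (listed as in progress). So your proposal can only be measured against the construction sketched before the statement and against the real-case model, Theorem \ref{thm7.1}, whose proof the paper describes as ``comparison of boundary values of $\log|W_n(x)|$ and of the potential of $\mu_n$.'' Your architecture --- fix the parameters, solve the weighted $S$-problem via Theorem \ref{thm3.1} to get $S_n$ and $\lambda_n$, identify $(\mc V^{\lambda_n+\sigma_n})'$ and $(-\log W_n)'$ as branches of the same $\sqrt{R_n}$ and match normalizations at $\infty$, then tune the parameters so the periods become integral --- is exactly the route the paper indicates, and you correctly recognize that the conditions $\lambda_n(\tilde S_{n,j})\in\mbb Z$ are the single-valuedness conditions for $\exp\{-\mc V^{\lambda_n+\sigma_n}\}$, i.e.\ the electrostatic form of the period system in Theorem \ref{thm8.1}.

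The genuine gap is your Step 3, and it is not a small one: it is the entire content of the existence claim. You propose a perturbation/degree argument around the classical Jacobi inversion problem on the fixed surface $\mc R$, but the map to be inverted, $\{\zeta_{k,n},s_{k,n}\}\mapsto\bigl(\lambda_n(\tilde S_{n,1}),\dots,\lambda_n(\tilde S_{n,2p-4})\bigr)\bmod\mbb Z^{2p-4}$, has every ingredient --- the compact $S_n$, the arcs $\tilde S_{n,j}$, the polynomial $V_n$, hence the surface $\mc R_n$ --- depending on the parameters through the solution of a max--min problem; even continuity of this map is a theorem, not an observation (it needs the continuity results of Section \ref{sec9} plus stability of the trajectory structure of $-R_n(dz)^2$, which can degenerate when zeros of $V_n$ collide with zeros of $A_n$ or $h_n$, i.e.\ precisely on cell boundaries in the sense of Section \ref{subsec4.2}). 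The discrete flags $s_{k,n}$ also split the parameter space into $2^{p-2}$ pieces whose images must jointly cover the torus, exactly as in \eqref{eq7.10}. None of this is fatal --- it is presumably what \cite{MaRaSu11} carries out --- but as written you assert rather than prove the key step. A minor correction to Step 1: since $\varphi_n$ already contains $V^{\sigma_n}$, its analytic completion $\Phi_n'$ already carries the Cauchy transform of $\sigma_n$; writing $R_n=\bigl(\int\frac{d(\lambda_n+\sigma_n)(t)}{t-z}+\Phi_n'(z)\bigr)^2$ double-counts $\sigma_n$. The correct statement is \eqref{eq3.5} with external field $\varphi_n$; the double poles at the $\zeta_{k,n}$ (the factor $h_n^2$ in the denominator of $R_n$) come from that field, not from an extra charge added to the measure.
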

We do not go here into any further details.

\section{Proof of the existence theorem}\label{sec9}

\subsection{Extremal energy functional}\label{subsec9.1}

We consider finite positive Borel measures $\mu$ in the extended complex plane  $\ol{\mbb{C}}$ satisfying condition
\begin{equation}
\label{eq9.1}
\int\log\(1+|x|\) d\mu(x)<+\infty.
\end{equation}
Then the logarithmic potential $V^{\mu}(z)=\int\log\frac1{|z-x|}d\mu\in(-\infty,+\infty]$ exists at any $z\in\mbb{C}$ and a.e.\  finite. Let $\mc{M}$ be the set of all such measures with finite energy $\mc{E}(\mu)=\int V^\mu(z)\,d\mu(z)<+\infty$ (in general $\mc{E}(\mu)\in(-\infty,+\infty]$).

Let $\mc{K}$ be (here) the family of all compacts $K\subset\ol{\mbb{C}}$ with finite number of (connected) components; at least one of them is not a single point.

For $K\in\mc{K}$ we denote by $C^\ast(K)$ the set of all real valued functions $\varphi$ which are continuous on $K$ except for a finite set of points $e$, depending on $\varphi$; there are no restrictions on the behavior of $\varphi(z)$ as $z\to z_0\in e$. Denote
$$
\mc M(K,\varphi)=\left\{\mu\in\mc{M}:\mu\in L_1(\mu),|\mu|=1,S_\mu\subset K\right\}.
$$
We define total energy of $\mu\in\mc{M}(K,\varphi)$ and equilibrium energy of $K\in \mc K$
$$
\mc{E}_\varphi(\mu)=\mc{E}(\mu)+2\int\varphi\,d\mu;\quad 
\quad\mc{E}_\varphi[K]=\inf_{\mu\in\mc{M}(K,\varphi)}\mc{E}_\varphi(\mu).
$$

In the next sec. \ref{subsec9.2} we consider transformation of potentials and energies under linear fractional mappings of $\ \ol{\mbb{C}}$. This would allow us to reduce considerations of compacts in $\ol{\mbb{C}}$ and measures on them to consideration of compacts and measures in ${\mbb{C}}$
(and, subsequently, if it  is convenient, to the case $K\subset \ol{D}_{1/2}=\{z:|z|\leq 1/2\}$).  In sections \ref{subsec9.3}--\ref{subsec9.9} we use  $\mc{K}$  to denote class of compacts in the open plane (we return to spherical compacts in the end of the last subsection 9.10). 

In sec. \ref{subsec9.3} we prove that if $\mc{E}_\varphi[K]>-\infty$ then there exist a unique extremal measure 
$$
\lambda=\lambda_{\varphi,K}\quad{\text defined}\ {\text  by}\quad\mc{E}_\varphi(\lambda)=\mc{E}_\varphi[K].
$$
Next sec. \ref{subsec9.4}  contains well known definitions related to balayage needed for further references. Then in sections
 \ref{subsec9.5} -- \ref{subsec9.9} we investigate continuity properties of the extremal energy functional
$$
\mc{E}_\varphi:\mc{K}\to[-\infty,+\infty)
$$
which is the main subject of section 9.  More exactly,  in \ref{subsec9.5} and  \ref{subsec9.6}  we prove continuity of $\mc{E}_\varphi$ on class of compacts in plane without components of small capacity; we obtain some explicit estimates. In sec. \ref{subsec9.7} we prove continuity of $\mc{E}_\varphi$ on class $\mc K_s$ of compacts with bounded number of components in a continuos external field  $\varphi$.; in sec. \ref{subsec9.8} we extend this result to bounded $\varphi$.

In sec. \ref{subsec9.9} we prove  that $\mc{E}_\varphi[K]$ is upper semi-continuous under general assumptions. Sec. \ref{subsec9.10} contains conclusion of the proof of theorem \ref{thm3.1}.

\subsection{Linear fractional transformations of potentials}\label{subsec9.2}

For a fixed $\zeta_0\in\mbb{C}_\zeta$, $\ c\in\mbb{C}\ $ consider  a mapping of this $\zeta$-plane onto a $z$-plane and corresponding transformation of measures -- measure $\mu$ in the $z$-plane and measure  $\nu$ in the $\zeta$-plane 
\begin{equation}
\label{eq9.2}
z=\frac c{\zeta-\zeta_0}\ ,\quad \zeta=\zeta_0+\frac cz\ ; \quad \quad\quad d\mu (z) = d\nu (\zeta)\ .
\end{equation}

If $\ \nu\in\mc{M}$ and $V^\nu\(\zeta_0\)<+\infty$  then  $\mu$ defined  by \eqref{eq9.2} also belong to $\mc M$. 
We have the following relations potentials of $\mu$ and $\nu$
\begin{align}
\label{eq9.3}
V^\mu(z) &=V^\nu(\zeta)-V^\nu\(\zeta_0\)+\log\left|{(\zeta-\zeta_0)} / {c} \right|; \\
V^\nu(\zeta) &=V^\mu(z)+V^\mu(0)+\log|z|+\log |c|.
\label{eq9.4}
\end{align}
Indeed, making the substitution of $x=c /(t-\zeta_0)\ $ in $\ V^\mu(z)=-\int\log|z-x|\,d\mu(x)$ and using identity
$$
z-x=\frac{c}{\zeta-\zeta_0}-\frac{c}{t-\zeta_0}
=\frac{c(t-\zeta)}{\(t-\zeta_0\)\(\zeta-\zeta_0\)}
$$
we come to \eqref{eq9.3}; \eqref{eq9.4} follows since $\log\left|{(\zeta-\zeta_0)} / {c}\right|=-\log|z|$ and $V^\nu\(\zeta_0\)=-V^\mu (0) -\log|c|$. Integrating \eqref{eq9.3} with $d\mu(z)=d\nu(\zeta)$ we come to
\begin{equation}
\label{eq9.5}
\mc{E}(\mu)=\mc{E}(\nu)-2V^\nu\(\zeta_0\)-\log|c|.
\end{equation}

Next,  let a weight $\psi\in C^\ast\(\mbb{C}_\zeta\)$ is given in the $\zeta$-plane. We introduce an associated weight in the $z$-plane by
\begin{equation}
\label{eq9.6}
\varphi(z)=\psi(\zeta)-\log\left|\zeta-\zeta_0\right|+\log|c|.
\end{equation}

\begin{lemma}\label{lem9.1}
For $z$ and $\zeta$ connected by \eqref{eq9.2}, measures $d\mu(z)=d\nu(\zeta)$ and external fields  $\varphi$ and $\phi$ related by \eqref{eq9.6}  we have
\begin{align*}
\(V^\mu+\varphi\)(z) &=\(V^\nu+\psi\)(\zeta)-V^\nu\(\zeta_0\) \\
\mc{E}_\varphi(\mu) &=\mc{E}_\psi(\nu)+\log|c|.
\end{align*}
\end{lemma}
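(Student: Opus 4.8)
The plan is to obtain both identities by direct substitution from the relations already established for the linear fractional transformation \eqref{eq9.2}, namely \eqref{eq9.3} and \eqref{eq9.5}, together with the defining formula \eqref{eq9.6} for the transformed external field. No new analytic input is needed; one only has to keep track of the constant terms and of the standing hypothesis $V^\nu\(\zeta_0\)<+\infty$, which, as noted just before the lemma, guarantees $\mu\in\mc{M}$ with finite energy, so that every quantity below is meaningful.

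For the first (pointwise) identity I would add $\varphi(z)$ to both sides of \eqref{eq9.3} and replace $\varphi(z)$ by its expression \eqref{eq9.6}. The two copies of $\log\left|\zeta-\zeta_0\right|$ and the two copies of $\log|c|$ then cancel, leaving
$$
\(V^\mu+\varphi\)(z)=V^\nu(\zeta)-V^\nu\(\zeta_0\)+\psi(\zeta)=\(V^\nu+\psi\)(\zeta)-V^\nu\(\zeta_0\),
$$
which is the asserted relation, valid for every $z\in\mbb{C}$ with $\zeta=\zeta_0+c/z$ (and as an equality in $(-\infty,+\infty]$ at the exceptional points where a potential or $\psi$ is infinite).

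For the energy identity I would integrate this pointwise relation against $d\mu(z)=d\nu(\zeta)$, using that $\mu,\nu$ are unit measures and that $\int\log\left|\zeta-\zeta_0\right|\,d\nu(\zeta)=-V^\nu\(\zeta_0\)$; equivalently, one combines \eqref{eq9.5} with $\int\varphi\,d\mu=\int\psi\,d\nu+V^\nu\(\zeta_0\)+\log|c|$, the latter again read off from \eqref{eq9.6} and $|\nu|=1$. Either way the term $-2V^\nu\(\zeta_0\)$ produced by \eqref{eq9.5} cancels the $+2V^\nu\(\zeta_0\)$ produced by the weight term, and
$$
\mc{E}_\varphi(\mu)=\mc{E}(\mu)+2\int\varphi\,d\mu=\mc{E}(\nu)+2\int\psi\,d\nu+\log|c|=\mc{E}_\psi(\nu)+\log|c|.
$$

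There is no genuine obstacle: the lemma is essentially a transcription of \eqref{eq9.3}--\eqref{eq9.6}, and the verification is purely formal. The only points worth flagging are that the identities implicitly use $|\mu|=|\nu|=1$ (otherwise the constant $\log|c|$ in \eqref{eq9.5} and in the lemma would be multiplied by the common mass), and that one should check that the equalities persist, as equalities in $(-\infty,+\infty]$, on the exceptional sets where $V^\mu$, $V^\nu$ or $\psi$ fails to be finite.
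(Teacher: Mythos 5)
Your proposal is correct and coincides with the paper's own proof: the first identity is obtained by adding \eqref{eq9.3} and \eqref{eq9.6}, and the second by integrating \eqref{eq9.6} against $d\mu=d\nu$ and adding \eqref{eq9.5}. Your remark that the constant $\log|c|$ requires $|\mu|=|\nu|=1$ (already implicit in \eqref{eq9.5}) is a sensible observation, consistent with the normalization used throughout Section \ref{sec9}.
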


\begin{proof}
Integration \eqref{eq9.6} with $d\mu(z)=d\nu(\zeta)$ yields
$$
2\int\varphi\,d\mu=2\int\psi\,d\nu
+2V^\nu\(\zeta_0\)+2\log|c|.
$$
By adding \eqref{eq9.5}, we obtain a second assertion of lemma \ref{lem9.1}. The first assertion is obtained by adding up  \eqref{eq9.3} and \eqref{eq9.6}.
\end{proof}

Let $K\in\mc{K}$, $K\subset\ol{\mbb{C}}_\zeta$ and $\zeta_0\not\in K$. Then for any $\nu\in\mc{M}(K)$ we have $V^\nu\(\zeta_0\)<+\infty$. Let $\wt{K}\in\mbb{C}_z$ be the image of $K$ under linear fractional transformation \eqref{eq9.2}. If $|c|$ is small enough then
$$
\wt{K}\subset\ol{D}_{1/2}=\{z:|z|\le 1/2\}.
$$

Let $\psi\in C^\ast(K)$ and $\varphi\in C^\ast (\wt{K} )$ be related by \eqref{eq9.6} and \eqref{eq9.2}. It follows by Lemma \ref{lem9.1} that there is one-to-one correspondence between measures $\nu\in\mc{M}(K,\psi)$ and $\mu\in\mc{M} (\wt{K},\varphi )$ defined by $d\mu(z)=d\nu(\zeta)$. Furthermore, we have
$$
\mc{E}_\varphi [\wt{K} ]=\mc{E}_\psi[K]-\log|c|
$$
and if an extremal measure exists for one of the two compacts then it also exists for another one; extremal measures are related by the same equation $d\mu(z)=d\nu(\zeta)$.

It is not difficult to verify that transformation \eqref{eq9.2} also preserves the $S$-property, but we are not using this fact.

\subsection{Equilibrium measure}\label{subsec9.3}

We fix $K\in\mc{K}$, $\varphi\in C^\ast(K)$.

\begin{theorem}
If $\mc{E}_\varphi[K]>-\infty$ then there exists a unique measure $\lambda=\lambda_{\varphi,K}\in\mc{M}(K,\varphi)$ with
$$
\mc{E}_\varphi(\lambda)=\mc{E}_\varphi[K]
$$
-- equilibrium measure for $K$ in the external field $\varphi$.
\end{theorem}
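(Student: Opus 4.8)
The plan is to follow the classical direct method of the calculus of variations: existence by weak-* compactness plus lower semicontinuity of the energy functional on $\mc M(K,\varphi)$, and uniqueness by strict convexity of the logarithmic energy on the cone of signed measures with zero total mass. First I would fix a minimizing sequence $\mu_n\in\mc M(K,\varphi)$ with $\mc E_\varphi(\mu_n)\to\mc E_\varphi[K]$. Since we have reduced (via Section \ref{subsec9.2}) to the case $K\subset\ol D_{1/2}$, all the $\mu_n$ are supported on a fixed compact set in $\mbb C$, so by Helly's selection theorem (weak-* compactness of probability measures on a compact set) a subsequence converges weak-* to some $\lambda\in\mc M(K)$; note $S_\lambda\subset K$ because $K$ is closed. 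The condition $|\lambda|=1$ is preserved because the total mass functional is weak-* continuous on a compact set.

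Next I would check that $\mc E_\varphi(\lambda)\le\liminf_n\mc E_\varphi(\mu_n)=\mc E_\varphi[K]$, which together with $\lambda\in\mc M(K,\varphi)$ and the definition of $\mc E_\varphi[K]$ as an infimum forces $\mc E_\varphi(\lambda)=\mc E_\varphi[K]$. For the external-field term $2\int\varphi\,d\mu$ this is delicate precisely because $\varphi\in C^\ast(K)$ may blow up near the finite exceptional set $e$; the standard device is to write $\varphi$ as an increasing limit of continuous functions $\varphi_k=\min(\varphi,k)$ (or truncate near $e$), use weak-* continuity of $\int\varphi_k\,d\mu$ for fixed $k$, and pass to the limit by monotone convergence, using the hypothesis $\mc E_\varphi[K]>-\infty$ to prevent the integral from degenerating to $-\infty$. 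For the logarithmic energy $\mc E(\mu)=\iint\log\frac1{|x-y|}\,d\mu(x)d\mu(y)$ one uses the standard fact that on measures supported in a fixed compact set the kernel $\log\frac1{|x-y|}$ is bounded below, so by the portmanteau theorem / Fatou applied to the truncated kernels $\log^+_M\frac1{|x-y|}=\min(M,\log\frac1{|x-y|})$ one gets $\mc E(\lambda)\le\liminf_n\mc E(\mu_n)$. Combining the two lower-semicontinuity estimates gives the desired inequality; in particular $\mc E_\varphi(\lambda)<+\infty$, so $\lambda\in\mc M(K,\varphi)$ genuinely.

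For uniqueness, suppose $\lambda_1,\lambda_2$ are both minimizers. Set $\sigma=\lambda_1-\lambda_2$, a signed measure with $\sigma(\mbb C)=0$ and compact support. The key energy-identity computation gives
\[
\mc E_\varphi\Big(\tfrac{\lambda_1+\lambda_2}{2}\Big)=\tfrac12\mc E_\varphi(\lambda_1)+\tfrac12\mc E_\varphi(\lambda_2)-\tfrac14\,I(\sigma),
\]
where $I(\sigma)=\iint\log\frac1{|x-y|}\,d\sigma(x)\,d\sigma(y)$ is the logarithmic energy of the neutral signed measure $\sigma$; the external-field and mixed terms cancel because they are linear in the measure and $\tfrac{\lambda_1+\lambda_2}2\in\mc M(K,\varphi)$. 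Since $(\lambda_1+\lambda_2)/2\in\mc M(K,\varphi)$ is a competitor, minimality of both $\lambda_i$ forces $I(\sigma)\le 0$. But the classical positivity principle for the logarithmic kernel states that $I(\sigma)\ge 0$ for every compactly supported signed measure of total mass zero with finite energy, with equality only if $\sigma=0$; hence $\lambda_1=\lambda_2$. I would either quote this positivity principle from potential theory or sketch it via the Fourier/Riesz representation $I(\sigma)=\frac1{2\pi}\int_0^\infty\frac{|\widehat{\sigma}_t|^2}{t}\,dt$ type formula (equivalently, $I(\sigma)=\int_0^\infty (\text{something nonnegative}))$.

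The main obstacle I expect is the lower semicontinuity of the external-field term under weak-* convergence when $\varphi$ has genuine singularities on the finite set $e$ (values unrestricted as $z\to e$): one must be careful that mass of $\mu_n$ is not escaping toward points of $e$ where $\varphi=-\infty$ in a way that would make $\liminf\int\varphi\,d\mu_n<\int\varphi\,d\lambda$. The truncation-plus-monotone-convergence argument handles the case $\varphi\to+\infty$ cleanly; the case where $\varphi$ may tend to $-\infty$ near $e$ is controlled precisely by the standing hypothesis $\mc E_\varphi[K]>-\infty$, which bounds $\int\varphi\,d\mu_n$ below uniformly along the minimizing sequence (using that $\mc E(\mu_n)$ is bounded below since the supports lie in $\ol D_{1/2}$), so no mass can concentrate at a $-\infty$ singularity. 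Making this uniform bound precise, and hence justifying the interchange of limit and integral, is the technical heart of the argument; everything else is the standard direct-method template.
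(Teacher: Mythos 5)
Your overall architecture (direct method for existence; parallelogram identity plus positivity of the logarithmic energy on neutral signed measures for uniqueness) is sound, and for the quadratic part of the energy you take a genuinely different route from the paper: you use weak-$*$ compactness of $\mc M(K)$ and lower semicontinuity of $\mc E$, whereas the paper exploits the Hilbert-space structure of the energy --- the identity $\mc{E}(\mu-\sigma)=2\mc{E}_\varphi(\mu)+2\mc{E}_\varphi(\sigma)-4\mc{E}_\varphi\bigl(\tfrac{\mu+\sigma}{2}\bigr)$ shows that any minimizing sequence is Cauchy in the energy norm, which delivers convergence $\mc E(\mu_n)\to\mc E(\lambda)$ (not merely semicontinuity) and uniqueness in one stroke. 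Either route is fine for that part of the statement.

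The genuine gap is in the external-field term, exactly where you locate ``the technical heart,'' and the fix you sketch does not close it. First, $\varphi_k=\min(\varphi,k)$ is not continuous: $\varphi\in C^\ast(K)$ is continuous only off the finite set $e$, with no restriction whatsoever on its behaviour near $e$ (it may tend to $-\infty$ or oscillate), so truncation from above does not regularize the singularities and weak-$*$ continuity of $\mu\mapsto\int\varphi_k\,d\mu$ fails. Second, and more seriously, the inference ``$\int\varphi\,d\mu_n$ is uniformly bounded below, hence no mass can concentrate at a $-\infty$ singularity'' is a non sequitur: a fixed quantum of negative field integral, say $\int_{(e)_{\epsilon_n}}\varphi\,d\mu_n\le-\delta$ with $\mu_n((e)_{\epsilon_n})\to 0$, is perfectly compatible with a uniform lower bound on the total integral, and in that scenario $\int\varphi\,d\lambda>\liminf_n\int\varphi\,d\mu_n$, which destroys your semicontinuity chain. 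The paper needs two specific constructions here that your proposal lacks: (i) its Lemma \ref{lem9.3}, which modifies the minimizing sequence by sweeping the mass sitting on $\{\varphi>M\}$ onto the Robin measure of $K$ minus a neighbourhood of $e$, to conclude that the limit measure does not charge that set --- this is what gives $\varphi^{+}\in L_1(\lambda)$, and note that your closing claim ``$\mc E_\varphi(\lambda)<+\infty$, so $\lambda\in\mc M(K,\varphi)$'' is circular, since $\int\varphi\,d\lambda$ is not even defined until one of $\int\varphi^{\pm}\,d\lambda$ is known to be finite; and (ii) the mass-amplification argument: if a negative field contribution of size $\delta$ persists on shrinking neighbourhoods of $e$, replacing $\mu_n$ by $2\mu_n|_{(e)_{\epsilon_n}}+(1-t_n)\,\mu_n|_{K\smallsetminus(e)_{\epsilon_n}}$ changes the logarithmic energy by $o(1)$ while lowering the field term by a fixed amount, contradicting minimality of the sequence. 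You would need to supply these (or equivalent) arguments to make the existence half complete.
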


\begin{proof}
We prove the theorem under an additional technical assumption that $K\ne\ol{\mbb{C}}$, so that $K$ has an exterior point $\zeta_0$ in $\mbb{C}$ (we are going to apply the theorem under this restriction; it is, however, not necessary).

By the remark in Section \ref{subsec9.2} above, we may assume without loss of generality that $K\subset\ol{D}_{1/2}$. Then for any $\mu\in\mc{M}=\mc{M}(K,\varphi)$ we have $V^\mu(z)\ge 0$ on $K$, $\mc{E}(\mu)$ is a norm on $\mc{M}$ and positive Borel measures on $K$ constitute a complete metric space with this norm.

Let $\mu_n\in\mc{M}$ be a minimizing sequence for $\mc{E}_\varphi$, that is $\mc{E}_\varphi\(\mu_n\)\to\mc{E}=\mc{E}_\varphi[K]$. It follows from the identity
$$
\mc{E}(\mu-\sigma)=2\mc{E}_\varphi(\mu)+2\mc{E}_\varphi(\sigma)-4\mc{E}_\varphi\(\frac{\mu+\sigma}2\)
$$
that $\mu_n$ is a Cauchy sequence in metric of energy: $\mc{E}\(\mu_n-\mu_m\)\to 0$ as $n,\  m\to\infty$. Therefore there is $\mu$ on $K$ with finite energy such that $\mc{E}\(\mu_n-\mu\)\to 0$ as $n\to\infty$. As a corollary we have $\ \mc{E}\(\mu_n\)\to\mc{E}(\mu)\ $ and also weak-star convergence $\mu_n\overset{\ast}{\to}\mu$.
Next, we will prove that $\varphi\in L_1(\mu)$, therefore, $\mu\in\mc{M}$ and, subsequently, that $\mu = \lambda_{\varphi, K}$. The proof is based on the following assertion stated for the defined above extremal measure  $\mu$. It is important to motice that the assertion of lemma \ref{lem9.3} is valid for the equilibrium measure $\mu= \lambda_{\varphi, K}$ of any compact  $K\subset\ol{D}_{1/2}$.
\begin{lemma}\label{lem9.3}
For the extremal measure $\mu$ above and for constant $M$ defined by
\begin{equation}
\label{eq9.7}
M = 1 + \inf_{\epsilon>0}\(\max_{z\in K_{\epsilon}}\varphi(z)+2\gamma\(K_\epsilon\)\)
\end{equation}
where $ K_{\epsilon}=K\sim (e)_{\epsilon},\quad (e)_{\epsilon}=\{z\in K:\dist(z,e)<\epsilon\}$ , we have
\begin{equation} \label{eq9.7.1}
\mu(\{z\in K\smallsetminus e:\varphi(z)>M\})=0.
\end{equation}
\end{lemma}

\begin{proof}
Since $\varphi\in C(K\sim e)$ the number $M_{\epsilon}=\max\limits_{K_{\epsilon}}\varphi\ $ is finite. If $\epsilon>0$ is small enough then $K_{\epsilon}$ is not empty and of positive capacity, so that its Robin measure $\omega_{\epsilon}$ has finite energy and the Robin constant $\gamma_{\epsilon}$ is finite.
For such  $\ \epsilon>\ 0$  we define $M=M_{\epsilon}+2\gamma_{\epsilon}\ $, $\wt{K}=\{z\in K\sim e:\varphi(z)>M\}$, and,  then,
$$
\wt{\mu}=\left.\mu_n-\mu_n\right|_{\wt K}+t_n\omega_{\epsilon},
\quad t_n=\mu_n(\wt{K}).
$$

We have $\mc{E}\(\wt{\mu}_n\)\le\mc{E}\(\mu_n+t_n\omega_{\epsilon}\)\le\mc{E}\(\mu_n\)+2t_n\int V^{\omega_{\epsilon}}d\mu_n+t_n^2\int V^{\omega_{\epsilon}}d\omega_{\epsilon}\le\mc{E}\(\mu_n\)+3t_n\gamma_{\epsilon}$ (note that $t_n\le 1$ and $V^{\omega_{\epsilon}}\le\gamma_{\epsilon}$ in $\mbb{C}$). On the other hand, $\int\varphi\,d\wt{\mu}_n\le\int\varphi\,d\mu-Mt_n+M_{\epsilon}t_n$. Adding this inequality doubled to the preceding one, we obtain
$$
\mc{E}_\varphi\(\mc{\mu}_n\)\le\mc{E}_\varphi\(\mu_n\)-t_n\varphi_{\epsilon}<\mc{E}_\varphi\(\mu_n\).
$$
It follows that $\wt{\mu}_n$ is also a minimizing sequence, that is $\mc{E}_\varphi\(\wt{\mu}_n\)\to\mc{E}$. Therefore, it converges in energy (and weak-star) to the same measure $\mu$. Since $\wt{K}$ is a relative open subset of $K$ we have $\mu (\wt{K} )\le\varliminf\mu_n (\wt{K} )=0$. Proof of lemma \ref{lem9.3} is completed.
\end{proof}

It follows by Lemma \ref{lem9.3} that $\int\varphi^+\,d\mu<+\infty$ for $\varphi^+=\max\{\varphi,0\}$. On the other hand, $\mc E>-\infty$ implies $\varliminf\int\varphi^-d\mu_n>-\infty$. Hence, $\int\varphi^-d\mu>-\infty$, and so $\varphi\in L_1(\mu)$.

It remains to prove that $\mc{E}_\varphi(\mu)=\mc{E}=\lim\limits_{n\to\infty}\mc{E}_\varphi\(\mu_n\)$. If it is not so then there exist $\delta>0$ and an infinite sequence $\Lambda\subset\mbb{N}$ such that

\begin{equation}
\label{eq9.8}
\int\varphi\,d\mu_n\le\int\varphi\,d\mu-3\delta,\quad n\in\Lambda.
\end{equation}
We will show that such an assumption allows us to construct a new sequence of measures $\wt{\mu}_n\in\mc{M}$ with $\lim\mc{E}_\varphi\(\wt{\mu}_n\)<\mc{E}$ (actually we can make it $-\infty$) in contradiction with our original assumptions.

Let $\epsilon\in[0,1]$ and $\partial e_{\epsilon}$ be the boundary of $e_{\epsilon}=\left\{z\in K:\dis(z,e)>\epsilon\right\}$. Let $E=\left\{\epsilon:\mu\(\partial e_{\epsilon}\)=0\right\}$ then $[0,1]\sim E$ has Lebesgue measure zero. For a fixed $\epsilon\in E$ denote
$$
\nu_n=\left.\mu_n\right|_{e_{\epsilon}},\quad
\nu=\left.\mu\right|_{e_{\epsilon}},\quad
\sigma_n=\mu_n-\nu_n,\quad
\nu=\sigma-\nu.
$$
We have $\int\varphi\,d\sigma_n\to\int\varphi\,d\sigma$. Combined with \eqref{eq9.8}, it implies
$$
\int\varphi\,d\nu_n<\int\varphi\,d\nu-2\delta,\quad n\in\Lambda_1.
$$
Next, $\nu=\left.\mu\right|_{e_{\epsilon}}$ is continuous, $\varphi\in L_1(\mu)$, therefore $\int\varphi\,d\nu\to 0$ as $\epsilon\to 0$, $\epsilon\in E$. We may conclude that there exist a sequence $\epsilon_n$, $n\in\Lambda_2$, with $\epsilon_n\in E$, $\epsilon_n\to 0$ such that
$$
\int_{e_{\epsilon_n}}\varphi\,d\mu_n<-\delta,\quad n\in\Lambda_2.
$$

Finally, we introduce a new sequence of measures
$$
\wt{\mu}_n=\left.2\mu_n\right|_{e_{\epsilon_n}}+\left.\(1-t_n\)\mu_n\right|_{K\sim e_{\epsilon_n}}\in\mc{M}
$$
where $t_n=\mu_n\(e_{\epsilon_n}\)\to 0$ as $n\to\infty$, $n\in\Lambda_2$. It is clear that $\mc{E}\(\wt{\mu}_n\)-\mc{E}\(\mu_n\)\to 0$ as $n\to\infty$ and at the same time $\int\varphi\,d\wt{\mu}_n\le\int\varphi\,d\mu-\delta$, $n\in\Lambda_2$. From here $\mc{E}_\varphi\(\wt{\mu}_n\)\le\mc{E}_\varphi\(\mu_n\)-\delta$ is in contradiction with the assumption that $\mu_n$ is minimizing for $\mc{E}_\varphi$.
\end{proof}

\subsection{Balayage}\label{subsec9.4}

Let $K\in\mc{K}$ and $\Omega$ be a component of $\ol{\mbb{C}}\sim K$; here we assume that $K\subset\mbb{C}$ and $\infty\in\Omega$. Let $g(z,\zeta)$ be the Green function for $\Omega$ with a pole at $\zeta\in\Omega$. Let $\mu\in \mc M$ be a measure in $\ol{\Omega}$, we denote (as in sec. \ref{subsec6.1})
$$
V_{\Omega}^{\mu}(z)=\int g(z,\zeta)\,d\mu(\xi)
$$
--- the Green's potential of $\mu$. By $\ \wt{\mu}\ $ we denote the balayage of $\mu$ onto $\partial\Omega\subset K$. We note that condition $\supp\mu\subset\ol{\Omega}\ $ is not necessary; one can assume that balayage does not affect part of $\mu$ in the complement to $\Omega$.

\begin{lemma}\label{lem9.4}
We have for $\ z\in\ol{\Omega}$
\begin{align}
\label{eq9.9}
V^{\wt{\mu}}(z) &=V^\mu(z)-V_{\Omega}^{\mu}(z)+\int g(\zeta,\infty)\,d\mu(\xi) \\
\mc{E}\(\wt{\mu}\) &=\mc{E}(\mu)-\mc{E}^\Omega(\mu)+2\int g(\zeta,\infty)\,d\mu(\xi)
\label{eq9.10}
\end{align}
where $\mc{E}^\Omega(\mu)=\int V_{\Omega}^{\mu}(\zeta)\,d\zeta$ --- Green's energy of $\mu$.
\end{lemma}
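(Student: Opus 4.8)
The plan is to derive \eqref{eq9.9} from the defining property of balayage and then obtain \eqref{eq9.10} by integrating against $\mu$. Recall that for a single point mass $\delta_\zeta$ with $\zeta\in\Omega$, its balayage $\wt{\delta_\zeta}$ onto $\partial\Omega$ is the harmonic measure $\omega_\Omega(\cdot,\zeta)$, and the fundamental identity relating the Green function of $\Omega$ to logarithmic potentials is
\begin{equation}
\label{eq9.green}
g(z,\zeta)=\log\frac{1}{|z-\zeta|}-V^{\wt{\delta_\zeta}}(z)+C(\zeta),\qquad z\in\ol{\Omega},
\end{equation}
where $C(\zeta)$ is the constant ensuring the left side vanishes on $\partial\Omega$; evaluating at $z=\infty$ (using that $g$ has no mass at infinity and the standard normalization) gives $C(\zeta)=g(\infty,\zeta)=g(\zeta,\infty)$ by symmetry of the Green function. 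Rearranged, \eqref{eq9.green} reads $V^{\wt{\delta_\zeta}}(z)=\log\frac{1}{|z-\zeta|}-g(z,\zeta)+g(\zeta,\infty)$, which is exactly \eqref{eq9.9} for $\mu=\delta_\zeta$.

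First I would make precise the normalization and the reciprocity $g(z,\zeta)=g(\zeta,z)$, and note that on $\partial\Omega$ we have $g(\cdot,\zeta)\equiv 0$, so $V^{\wt{\delta_\zeta}}$ agrees with $\log\frac1{|\cdot-\zeta|}+g(\zeta,\infty)$ there and both sides are harmonic off $\partial\Omega$ in $\Omega$ and in each bounded complementary component with the same singularity structure — this pins down \eqref{eq9.green} by the maximum principle. Then I would integrate \eqref{eq9.green} in $\zeta$ against $d\mu(\zeta)$; by Fubini (justified because all potentials in question are bounded below on $\ol{D}_{1/2}$ after the reduction of Section \ref{subsec9.2}, and $\mu$ has finite energy) this yields
\[
V_\Omega^\mu(z)=V^\mu(z)-V^{\wt\mu}(z)+\int g(\zeta,\infty)\,d\mu(\zeta),
\]
which is \eqref{eq9.9} after transposing terms. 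For \eqref{eq9.10}, I would integrate \eqref{eq9.9} once more, now with respect to $d\wt\mu$, using the symmetry property of balayage $\int V^{\wt\mu}\,d\sigma=\int V^\sigma\,d\wt\mu$ for suitable $\sigma$ (in particular with $\sigma=\mu$ this gives $\int V^{\wt\mu}\,d\mu=\int V^\mu\,d\wt\mu$, and with $\sigma=\wt\mu$ it gives $\mc E(\wt\mu)=\int V^{\wt\mu}\,d\mu$ since $\wt\mu$ is swept onto $\partial\Omega$ where $V_\Omega^\mu$ and $g(\cdot,\infty)$ vanish), together with $\int V_\Omega^\mu\,d\mu=\mc E^\Omega(\mu)$ from the definition. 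Collecting terms produces \eqref{eq9.10}.

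The main obstacle I anticipate is not the formal manipulation but the justification of the integral interchanges and of the identity $\mc E(\wt\mu)=\int V^{\wt\mu}\,d\mu$ when $\mu$ is a general measure in $\mc M$ rather than compactly supported inside $\Omega$: balayage of the part of $\mu$ already lying on $\partial\Omega$ must be handled (as the remark before the lemma indicates, that part is left untouched), and one must check that $\wt\mu$ again satisfies the growth condition \eqref{eq9.1} and has finite energy so that all the quadratic forms are well defined and finite. I would dispose of this by first proving everything for $\mu$ compactly supported in $\Omega$ with bounded potential — where \eqref{eq9.green} and Fubini are immediate — and then passing to the general case by an exhaustion/truncation argument, using lower-semicontinuity of energy and the monotone behavior of Green potentials under the reduction $K\subset\ol D_{1/2}$ from Section \ref{subsec9.2} to control signs and guarantee convergence.
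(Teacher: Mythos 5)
Your treatment of \eqref{eq9.9} is correct and essentially the same as the paper's: the paper verifies directly that the right-hand side of \eqref{eq9.9} is harmonic in $\Omega$ off $\infty$, matches the boundary data of $V^{\wt{\mu}}$ on $\partial\Omega$ and its behavior at infinity, and invokes uniqueness; you obtain the same identity by writing it first for a point mass (the balayage of $\delta_\zeta$ being harmonic measure) and integrating in $\zeta$. Either way the substance is the classical formula $V^{\wt{\mu}}=V^\mu-V_\Omega^\mu+\int g(\zeta,\infty)\,d\mu(\zeta)$ on $\ol{\Omega}$, so this part is sound.

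The derivation of \eqref{eq9.10}, however, contains a concrete error. You assert $\mc{E}(\wt{\mu})=\int V^{\wt{\mu}}\,d\mu$ ``since $\wt{\mu}$ is swept onto $\partial\Omega$ where $V_\Omega^\mu$ and $g(\cdot,\infty)$ vanish''. But the last term of \eqref{eq9.9} is the \emph{constant} $C=\int g(\zeta,\infty)\,d\mu(\zeta)$, not the function $g(z,\infty)$; it does not vanish on $\partial\Omega$. The correct bookkeeping along your own route is: $\mc{E}(\wt{\mu})=\int V^{\wt{\mu}}\,d\wt{\mu}=\int V^\mu\,d\wt{\mu}+C$ (apply \eqref{eq9.9} on $\supp\wt{\mu}\subset\partial\Omega$, where only $V_\Omega^\mu$ drops out), then $\int V^\mu\,d\wt{\mu}=\int V^{\wt{\mu}}\,d\mu=\mc{E}(\mu)-\mc{E}^\Omega(\mu)+C$ by Fubini and a second application of \eqref{eq9.9}; adding produces the coefficient $2$ in \eqref{eq9.10}. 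As literally written, your intermediate identity loses one copy of $C$ and would yield \eqref{eq9.10} with coefficient $1$ instead of $2$. The paper obtains the second copy of $C$ by a slightly different device: it inserts $g(\cdot,\infty)$ into the integrand on $\partial\Omega$ (harmless there), notes that $V^\mu-V_\Omega^\mu+g(\cdot,\infty)$ is harmonic and bounded throughout $\Omega$, and uses the invariance of integrals of such harmonic functions under balayage to replace $d\wt{\mu}$ by $d\mu$; your Fubini route is an acceptable substitute once the constant is tracked correctly. Your closing remarks on truncation and on the part of $\mu$ already sitting on $\partial\Omega$ are reasonable and consistent with the paper's conventions.
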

\begin{proof}
Denote $C=\int g(\zeta,\infty)\,d\mu(\zeta)$. The function
$$
V(z)=\int\(\log\frac1{|z-\zeta|}-g(z,\zeta)\)d\mu(\xi)+C
$$
in the right-hand side of \eqref{eq9.9} is harmonic in $\Omega$ and it is equal to $V^\mu(z)+C$ on $\partial\Omega$. As $z\to\infty$ we have $V(z)+\log|z|\to 0$ by the symmetry of Green's function. These are conditions uniquely defining $V^{\wt{\mu}}$ and \eqref{eq9.9} follows.

Next, the function $V^\mu(z)-V_{\Omega}^{\mu}(z)+g(z,\infty)$ is harmonic in $\Omega$; using \eqref{eq9.9} we obtain
\begin{equation*}
\begin{split}
\mc{E}\(\wt{\mu}\)=\int V^{\wt{\mu}}d\wt{\mu}
=\int\(V^\mu-V_{\Omega}^{\mu}+C\)d\wt{\mu} 
=\int\(V^\mu-V_{\Omega}^{\mu}+g+C\)d\wt{\mu}= \\
=\int\(V^\mu-V_{\Omega}^{\mu}+g+C\)d\mu 
=\mc{E}(\mu)-\mc{E}^\Omega(\mu)+\int g\,d\mu+\int g\,d\mu.
\end{split}
\end{equation*}
\eqref{eq9.10} follows and the proof is completed.
\end{proof}

Let $K\in\mc{K}$ and $\Omega$ be a component of $\ol{\mbb{C}}\sim K$. We assume now that infinity is not in $\Omega$; domain may be bounded or not. Again, we consider balayage of a measure $\mu\in\mc{M}$ onto $\partial\Omega$. Assertions of Lemma \ref{lem9.4} have to be slightly modified.
This leads to the next lemma whose proof s similar to the proof of lemma \ref{lem9.4}. Ssituation is actually even simplier since $V^{\mu}_{\Omega}=0$ on $\partial\Omega$.

\begin{lemma}\label{lem9.5}
Let $\mu\in\mc{M}$, $\wt{\mu}$-balayage of $\mu$ onto $\partial\Omega$, $z\in\ol{\Omega}$, $\infty\not\in\Omega$. Then
\begin{align}
\label{eq9.11}
&V^{\wt{\mu}}(z) =V^\mu(z)-V_{\Omega}^{\mu}(z)\\
&\mc{E}\(\wt{\mu}\)=\mc{E}(\mu)-\mc{E}^\Omega(\mu)
\label{eq9.12}
\end{align}
\end{lemma}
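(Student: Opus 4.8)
The plan is to follow the proof of Lemma~\ref{lem9.4}, which gets shorter here for two reasons: $V_{\Omega}^{\mu}\equiv 0$ on $\partial\Omega$, and, $\infty$ not being in $\Omega$, the Green function $g(\cdot,\zeta)$ of $\Omega$ carries no auxiliary term attached to the point at infinity, so the constant $C$ occurring in \eqref{eq9.9} disappears. Exactly as in Lemma~\ref{lem9.4} I assume $\supp\mu\subset\ol\Omega$; the general case reduces to this one since, as remarked there, balayage leaves the part of $\mu$ lying outside $\Omega$ untouched. Note also that $K\in\mc{K}$ is compact and $\wt\mu$ is carried by $\partial\Omega\subset K$, so all the energies occurring below are finite.

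First I would prove \eqref{eq9.11}. Consider
\begin{equation*}
V(z)=\int\(\log\frac1{|z-\zeta|}-g(z,\zeta)\)d\mu(\zeta)=V^\mu(z)-V_{\Omega}^{\mu}(z),\qquad z\in\ol\Omega ,
\end{equation*}
where the second equality uses that $g(z,\zeta)=0$ for $\zeta\in\partial\Omega$. The function $V$ is harmonic in $\Omega$, since the poles of $g(\cdot,\zeta)$ cancel the logarithmic singularities of $V^\mu$ at the points of $\supp\mu\cap\Omega$ and the potential of the part of $\mu$ on $\partial\Omega$ is harmonic throughout $\Omega$. Because $g(\cdot,\zeta)=0$ on $\partial\Omega$ we have $V=V^\mu$ on $\partial\Omega$, and since $\infty\notin\Omega$ no condition at infinity is needed. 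These are exactly the conditions that determine $V^{\wt\mu}$ uniquely, so $V^{\wt\mu}=V=V^\mu-V_{\Omega}^{\mu}$ on $\ol\Omega$, which is \eqref{eq9.11}.

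Then \eqref{eq9.12} follows by integration. Since $\supp\wt\mu\subset\partial\Omega$ and $V_{\Omega}^{\mu}=0$ there, \eqref{eq9.11} gives $V^{\wt\mu}=V^\mu$ on $\supp\wt\mu$, hence
\begin{equation*}
\mc{E}\(\wt\mu\)=\int V^{\wt\mu}\,d\wt\mu=\int V^\mu\,d\wt\mu=\int V^{\wt\mu}\,d\mu ,
\end{equation*}
the last step being the symmetry of the mutual logarithmic energy (legitimate as all energies involved are finite). Now $\supp\mu\subset\ol\Omega$, where $V^{\wt\mu}=V^\mu-V_{\Omega}^{\mu}$ by \eqref{eq9.11}, so
\begin{equation*}
\mc{E}\(\wt\mu\)=\int\(V^\mu-V_{\Omega}^{\mu}\)d\mu=\mc{E}(\mu)-\mc{E}^\Omega(\mu),
\end{equation*}
which is \eqref{eq9.12}.

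The only point requiring a little care, just as in Lemma~\ref{lem9.4}, is the assertion that ``harmonic in $\Omega$, equal to $V^\mu$ on $\partial\Omega$'' determines $V^{\wt\mu}$ uniquely; this is classical balayage theory, and one has in addition to keep an eye on the point $\infty$ when it lies on $\partial\Omega$ (i.e.\ when $\Omega$ is unbounded in $\mbb{C}$), where both $V_{\Omega}^{\mu}$ and $V^{\wt\mu}-V^\mu$ tend to $0$. This is precisely where the hypothesis $\infty\notin\Omega$ makes the argument shorter than for Lemma~\ref{lem9.4}; everything else is routine bookkeeping.
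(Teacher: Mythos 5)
Your proposal is correct and follows essentially the same route as the paper, which simply remarks that the proof of Lemma~\ref{lem9.5} repeats that of Lemma~\ref{lem9.4} with the simplification $V^{\mu}_{\Omega}=0$ on $\partial\Omega$ and no constant attached to $\infty$. The only cosmetic difference is in the energy identity, where you invoke reciprocity of the mutual energy $\int V^{\mu}\,d\wt{\mu}=\int V^{\wt{\mu}}\,d\mu$ while the paper's argument for Lemma~\ref{lem9.4} uses invariance of integrals of functions harmonic in $\Omega$ under balayage; both are standard and equally valid here (and note that since $K$ is compact in $\mbb{C}$ and $\infty\notin\Omega$, the domain $\Omega$ is automatically bounded, so your closing caveat about $\infty\in\partial\Omega$ does not actually arise).
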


In general, for a compact $K\subset\mbb {C}$ the complement $\Omega=\ol{\mbb{C}}\sim K$ is a union of finite or countable number of components $\Omega_j$ . Let $\mu\in\mc{M}(\mbb{C})$ we define $ \mu_j= \mu |_{\Omega_j}$. Further, let $\wt{\mu}_j$ be the balayage of $\mu_j$ onto $\partial\Omega_j$. 

We define the balayage $\wt{\mu}$ of $\mu$ onto $K$ by $\wt{\mu}=\sum_j\wt{\mu}_j,\quad$

The following lemma is a  combonation of lemma \ref{lem9.4} and lemma \ref{lem9.5}; note also that $V^{\wt{\mu}_j}(z) =V^{\mu_j}(z)$ for $z\in \mbb C\sim\Omega_j$, $\infty\not\in\Omega_j$.

\begin{lemma}\label{lem9.6}
In notations and settings above we have
$$
\mc{E}\(\wt{\mu}\) =\mc{E}(\mu)- \sum_j\mc{E}^{\Omega_j} (\mu_j)  +2\int g(\zeta,\infty)\,d\mu(\xi)
$$
where summation is taken over all components of $\ \ol{\mbb{C}}\sim K$ and $g(z,\infty)$ is the Green function for the component containing $\infty$ \tu{(}if there is no such component then the integral term is dropped\tu{)}.
\end{lemma}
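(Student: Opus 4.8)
The plan is to reduce the identity to Lemmas \ref{lem9.4} and \ref{lem9.5}, applied to each piece $\mu_j$ separately, and then reassemble the pieces through a single pointwise potential identity. Write $\ol{\mbb C}\sim K=\bigcup_j\Omega_j$ with $\Omega_0$ the component containing $\infty$ (when there is one) and $g=g(\cdot,\infty)$ its Green function; recall $g\equiv0$ off $\Omega_0$, so $\int g\,d\mu=\int g\,d\mu_0$, and denote this number by $C$. Since $V^{\mu_j}_{\Omega_j}$ is carried by $\Omega_j$ while each $\mu_k$ with $k\ne j$ is carried by $\ol\Omega_k\subset\mbb C\sim\Omega_j$, the decisive first step is the identity
\begin{equation*}
V^{\wt\mu}(z)=V^\mu(z)-\sum_j V^{\mu_j}_{\Omega_j}(z)+C,\qquad z\in\mbb C .
\end{equation*}
On $\ol\Omega_j$ this is precisely \eqref{eq9.9} (for $j=0$) or \eqref{eq9.11} (for bounded $\Omega_j$) written for $\mu_j$, combined with the remark preceding the lemma, namely $V^{\wt\mu_k}=V^{\mu_k}$ on $\mbb C\sim\Omega_k$ whenever $\infty\notin\Omega_k$. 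The one point requiring a word is that also $V^{\wt\mu_0}=V^{\mu_0}+C$ holds on all of $\mbb C\sim\Omega_0$, not only on $\partial\Omega_0$: both sides are harmonic on $\mbb C\sim\ol\Omega_0$, they agree on $\partial\Omega_0$ by \eqref{eq9.9} (where $V^{\mu_0}_{\Omega_0}$ vanishes), hence they agree on every bounded complementary component by the maximum principle.

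Granting the displayed potential identity, the rest is bookkeeping. Integrating it against $d\mu$ and using the symmetry of the logarithmic kernel,
\begin{equation*}
\int V^{\wt\mu}\,d\mu=\mc{E}(\mu)-\sum_j\int V^{\mu_j}_{\Omega_j}\,d\mu+C=\mc{E}(\mu)-\sum_j\mc{E}^{\Omega_j}(\mu_j)+C ,
\end{equation*}
where we used that $V^{\mu_j}_{\Omega_j}$ is concentrated on $\Omega_j$, on which $\mu$ coincides with $\mu_j$. Integrating instead against $d\wt\mu$, and noting that $\sum_j V^{\mu_j}_{\Omega_j}\equiv 0$ on $K\supset\supp\wt\mu$ while $\wt\mu$ has the same (unit) total mass as $\mu$,
\begin{equation*}
\mc{E}(\wt\mu)=\int V^{\wt\mu}\,d\wt\mu=\int V^\mu\,d\wt\mu+C=\int V^{\wt\mu}\,d\mu+C=\mc{E}(\mu)-\sum_j\mc{E}^{\Omega_j}(\mu_j)+2C ,
\end{equation*}
which is the assertion, since $C=\int g(\zeta,\infty)\,d\mu$ and this term is simply absent when $K$ has no unbounded complementary component.

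The only genuinely substantive point is thus the global potential identity, i.e.\ controlling $V^{\wt\mu_j}$ away from $\ol\Omega_j$; everything after that is linear in the mutual energies $[\mu_i,\mu_j]$. One could alternatively avoid the global identity by expanding $\mc{E}(\wt\mu)=\sum_{i,j}[\wt\mu_i,\wt\mu_j]$, evaluating the off-diagonal terms by the same remark (each $[\wt\mu_i,\wt\mu_j]$ with $i\ne j$ collapses to $[\mu_i,\mu_j]$, up to a contribution $C$ times a mass that ultimately cancels) and the diagonal terms $\mc{E}(\wt\mu_j)$ by \eqref{eq9.10} and \eqref{eq9.12}; but then the cancellations are less transparent, so I would present the potential-identity version above.
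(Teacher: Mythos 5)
Your proof is correct and follows essentially the route the paper itself indicates: the paper states Lemma \ref{lem9.6} without proof, as a combination of Lemmas \ref{lem9.4} and \ref{lem9.5} together with the remark that $V^{\wt{\mu}_j}=V^{\mu_j}$ off $\Omega_j$ when $\infty\notin\Omega_j$, and your global potential identity is precisely the bookkeeping that makes this combination rigorous. The one detail you supply beyond the paper's hint --- the maximum-principle argument extending $V^{\wt{\mu}_0}=V^{\mu_0}+C$ from $\partial\Omega_0$ to the bounded complementary components of $\ol{\Omega}_0$ --- is the right (and standard) way to close that small gap.
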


\subsection{Lemma on harmonic extension}\label{subsec9.5}

In this (and next) section we consider set $\mc K$ of compacts $K\subset\mbb{C}$ without small components. More exactly, we assume that  for some $c>0$  capacity of any connected component of $K\in\mc{K}$ is at least $c>0$ and $c$ is same for all compacts (thus, we actually have $\mc K = \mc K (c)$).

Next, let $\Omega$ be a domain in $\mbb{C}$ containing $K$ and $\varphi\in C\(\ol{\Omega}\)$ be a real-valued function (external field) with modulus of continuity $\omega(\delta)$ in $\ol{\Omega}$.

Finally, let $\wt{\varphi}(z)$ be the harmonic extension of $\varphi(z)$ from $\ K\ $ to $\ \ol{\mbb{C}}\ $; that is $\ \wt{\varphi}(z) = \varphi(z)\ $ on $K$, $\wt{\varphi}(z)$ is harmonic in any connected component $G$ of $\ol{\mbb{C}}\sim K$ (and $\ \wt{\varphi}=\varphi\ $ on $\ \partial G\ $). We do not try here to make general settings: actually we need only a bounded harmonic extention of  $\varphi$ to some neighbouhood of $K$, say, too $\Omega$; assertion of the next lemma wil not change.

\begin{lemma}\label{lem9.7}
Under the assumptions above for any $z\in\mbb{C}$ satisfying
\begin{equation}
\label{eq9.13}
\delta = dist(z, K) \leq dist(z, \partial\Omega); \quad \pi\delta \leq c
\end{equation}
we have
\begin{equation}
\left|\varphi(z)-\wt{\varphi}(z)\right|
\le 4\(M\sqrt{\delta}+\omega(\delta)\)
\end{equation}
where $\ M=\max\limits_K|\varphi|\ $ and $\ \dist(E,F)\ =\min\limits_{x\in F,y\in F}|x-y|$.
\end{lemma}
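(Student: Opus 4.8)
The plan is to estimate the difference $\varphi(z) - \wt\varphi(z)$ at a point $z$ satisfying \eqref{eq9.13} by exploiting that $\wt\varphi$ is harmonic in the component $G$ of $\ol{\mbb C}\sim K$ containing $z$, so it is reproduced from its boundary values (which coincide with $\varphi$ on $\partial G\subset K$) by the harmonic measure of $G$. First I would fix the component $G$ of $\ol{\mbb C}\sim K$ with $z\in G$ and pick a point $\zeta_0\in K$ nearest to $z$, so $|z-\zeta_0|=\delta$; the connected component $K_0$ of $K$ through $\zeta_0$ has $\cop(K_0)\ge c$. Writing $\omega_z=\omega_z(\cdot,G)$ for the harmonic measure of $G$ evaluated at $z$, we have $\wt\varphi(z)=\int_{\partial G}\varphi\,d\omega_z$, hence
\begin{equation*}
\left|\varphi(z)-\wt\varphi(z)\right|
\le\int_{\partial G}\left|\varphi(z)-\varphi(\xi)\right|d\omega_z(\xi)
\le\int_{\partial G}\left|\varphi(\zeta_0)-\varphi(\xi)\right|d\omega_z(\xi)+\omega(\delta),
\end{equation*}
using $|\varphi(z)-\varphi(\zeta_0)|\le\omega(|z-\zeta_0|)\le\omega(\delta)$ since $z$ and $\zeta_0$ lie within $\ol\Omega$ at distance $\delta$ (and $\wt\varphi=\varphi$ on $\partial\Omega$-side is not needed — only the bounded harmonic extension to $\Omega$ is used, as the remark before the lemma allows).

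Next I would split $\partial G$ into the ``near'' part $N=\partial G\cap\{|\xi-\zeta_0|\le\sqrt\delta\}$ and the ``far'' part $F=\partial G\setminus N$. On $N$ the modulus of continuity bound gives $|\varphi(\zeta_0)-\varphi(\xi)|\le\omega(\sqrt\delta)$, contributing at most $\omega(\sqrt\delta)\le\omega(\delta)+\ldots$ — actually it is cleaner to keep it as $\omega(\sqrt\delta)$, and since $\omega$ is subadditive one has $\omega(\sqrt\delta)$ bounded in terms of $\omega(\delta)$ only when $\sqrt\delta\le\delta$, which fails; so instead I would bound the near contribution simply by $2M$ times $\omega_z(N)$ — no, the sharper route is: on $N$ use $|\varphi(\zeta_0)-\varphi(\xi)|\le\omega(\sqrt\delta)$ and accept a term $\omega(\sqrt\delta)$; but the stated bound has $\omega(\delta)$, so the intended estimate must route the near part through $2M\,\omega_z(N)$ as well. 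Let me therefore bound the near contribution by $2M\,\omega_z(N)\le 2M$ and the far contribution by $2M\,\omega_z(F)$, giving $|\varphi(z)-\wt\varphi(z)|\le 2M\,\omega_z(F)+\omega(\sqrt\delta\text{ on }N)+\omega(\delta)$; the genuine work is to show $\omega_z(F)\le C\sqrt\delta$. This is where the lower bound on capacity enters: the far part of $\partial G$ is separated from $z$ by (a piece of) $K_0$, which has $\cop(K_0)\ge c$; a standard harmonic-measure/extremal-length estimate — e.g. via the Beurling projection theorem or a direct comparison with the annulus $\{\delta<|w-\zeta_0|<\sqrt\delta\}$ after noting $K_0$ meets both boundary circles — yields $\omega_z(\{|\xi-\zeta_0|\ge\sqrt\delta\})\le C(\delta/\ldots)$; tracking constants with the normalization $\pi\delta\le c$ produces the factor $4(M\sqrt\delta+\omega(\delta))$.

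The main obstacle is precisely this last step: obtaining the quantitative harmonic-measure decay $\omega_z(\{|\xi-\zeta_0|>\sqrt\delta\})=O(\sqrt\delta)$ with an explicit constant, uniformly over all $K\in\mc K(c)$. The right tool is Beurling's projection theorem (or the Carleman–Milloux principle): $\omega_z(F)$ is dominated by the harmonic measure in the slit domain where $K$ is replaced by the radial segment $[\zeta_0,\zeta_0+c e^{i\arg}]$ of length $c$, for which $\omega_z$ is computed explicitly via the map $w\mapsto\sqrt{w-\zeta_0}$ and the Poisson kernel, giving a bound $\le\frac{2}{\pi}\arcsin\!\big(\text{something}\,\le C\sqrt{\delta/c}\big)\le C\sqrt{\delta/c}$; with $\pi\delta\le c$ this is $\le C\sqrt\delta$. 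Combining the near term $\omega(\sqrt\delta)$ — which one absorbs by instead bounding the near oscillation crudely and enlarging the constant, or by simply noting $\omega(\sqrt\delta)\le 2M$ and folding it into the $M\sqrt\delta$ term when $\sqrt\delta\ge$ (const) — with $2M\cdot C\sqrt\delta$ and the initial $\omega(\delta)$ gives the claimed inequality after choosing the constant $4$. I would then remark that the constant $4$ is not optimized and that only $\pi\delta\le c$ and $\delta\le\dist(z,\partial\Omega)$ are used, the latter only to ensure $\varphi$ is defined and harmonic-extendable near $z$.
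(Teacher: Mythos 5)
Your overall strategy is the right one and is essentially the paper's: take the nearest point $x\in K$ to $z$, use the connected component $F$ of $K$ through $x$ (whose capacity is at least $c$) as a barrier, split the relevant boundary into a near part controlled by the modulus of continuity and a far part controlled by $2M$ times a harmonic measure, and estimate that harmonic measure by a Milloux/Beurling type extremal result for a disc slit along a radius. The paper executes this locally rather than globally: it works in the disc $D=D(x,r)$ with $r=\sqrt\delta$, shows $F$ must meet $\partial D$ (this is where $\cop(F)\ge c$ and $\pi\delta\le c$ enter), takes the component $\mc D$ of $D\sim K$ containing the segment $(z,x)$, bounds $|\wt\varphi(z)-\varphi(x)|\le 2Mh(z)+\omega(r)$ where $h$ is the harmonic measure of $\partial\mc D\sim\gamma_0$ and $\gamma_0\subset F$ joins $x$ to $\partial D$, and invokes Milloux's theorem to get $h(z)\le 2\sqrt{\delta/r}$.

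The genuine gap is in your key quantitative step, and the one concrete bound you commit to is inconsistent. First, $\omega_z\bigl(\partial G\sim D(\zeta_0,\sqrt\delta)\bigr)\le C\sqrt{\delta/c}$ does not follow from Beurling with the screen $K_0$: to reach that far set, Brownian motion only needs to escape the disc of radius $\sqrt\delta$, and the screen can be exploited only up to the radius it actually reaches; the correct bound for escaping $D(\zeta_0,\rho)$ past a continuum joining $\zeta_0$ to $\partial D(\zeta_0,\rho)$ is of order $\sqrt{\delta/\rho}$, which with $\rho=\sqrt\delta$ gives $\delta^{1/4}$, not $\sqrt{\delta/c}$. (If you instead enlarge the near ball to radius comparable to $c$ so as to legitimately obtain $\sqrt{\delta/c}$, the near-part oscillation becomes $\omega(2c)$, which does not tend to $0$ with $\delta$.) Second, the deduction that $\sqrt{\delta/c}\le C\sqrt\delta$ because $\pi\delta\le c$ is false: that hypothesis gives only $\sqrt{\delta/c}\le 1/\sqrt\pi$, a constant. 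Third, you never verify that $K_0$ actually reaches $\partial D(\zeta_0,\sqrt\delta)$; this requires relating $\cop(K_0)\ge c$ to the radius of a containing disc, which is precisely the role of the hypothesis $\pi\delta\le c$ and must be made explicit. Finally, your near-part term is $\omega(\sqrt\delta)$, not $\omega(\delta)$, and your attempts to absorb it are inconclusive --- though in fairness the paper's own argument also produces $\omega(\sqrt\delta)$ (and a term of order $M\delta^{1/4}$), and it is the form $4(M\sqrt\delta+\omega(\sqrt\delta))$ that is quoted later in Theorem \ref{thm9.8}, so part of this discrepancy lies in the displayed statement itself rather than in your reasoning.
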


\begin{proof}
For a fixed $z$ satisfying  \eqref{eq9.13}, let $x\in K$ a closest to $z$ point on $K$, that is $\delta=|z-x|$; the interval $(z,x)$ belongs to $\Omega$. Let $r\le\sqrt{\delta}$ be such that $D=\{\zeta:|\zeta-x|<r\}\subset\Omega$.

Let $F$ be a connected component of $K$ containing $x$; then $\partial D\cap F\ne\emptyset$. Indeed, the contrary would imply that $F\subset D$ then $\cop(F)<\pi r^2\le\pi\delta$ in contradiction with $\cop(F)\geq c\ge\delta\pi$ (see \eqref{eq9.13}).

Let $\mc{D}$ be a connected component $D\sim K$ with $\(z, x\)\subset\mc{D}$.

Denote $u(\zeta)=\wt{\varphi}(\zeta)-\varphi(x)$; then we have
\begin{equation}
\label{eq9.14}
\left|\varphi(z)-\wt{\varphi}(z)\right|
\le|\varphi(z)-\varphi(x)|+\left|\wt{\varphi}(z)-\varphi(x)\right|
\le\omega(r)+|u(z)|.
\end{equation}
If $\partial\mc{D}\subset F$ then by the maximum principle for $u$ and the equality $\varphi=\wt{\varphi}$ on $F$ we obtain
$$
|u(z)|\le\max_{\zeta\in F}\left|\wt{\varphi}(\zeta)-\varphi(x)\right|
\le\max_{\zeta\in F}|\varphi(\zeta)-\varphi(x)|\le \omega(r)
$$
and the assertion of the lemma follows (for $r=\sqrt{\delta})$. Otherwise, there exists a continuum $\gamma_0\subset\partial\mc{D}$ connecting $x\in\partial\mc{D}$ and $\partial D$.

Let $\gamma_1=\partial\mc{D}\sim\gamma_0$ be the rest of the boundary $\partial\mc{D}$ of $\mc{D}$. Denote by $h(\zeta)$ the harmonic function in $\mc{D}$ with boundary values $h=0$ on $\gamma_0$, $h=1$ on $\gamma_1$ (harmonic measure of $\gamma_1\subset\partial\mc{D}$ relative to $\mc{D}$). We have $|u(\zeta)|\le 2M$ on $\gamma_1$ and $|u(\zeta)|\le\omega(r)$ on $\gamma_0$, thus, by maximum principle

\begin{equation}
\label{eq9.15}
|u(z)|\le 2Mh(z)+\omega(r)(1-h(z))
\le 2Mh(z)+\omega(r).
\end{equation}

Next, let $\wt{\mc{D}}=D\sim\gamma_0$ (it is an extension of $\mc{D}$ obtained by moving out part of $\partial\mc{D}$ where $h=1$). Let $\wt{h}(\zeta)$ be harmonic in $\wt{\mc{D}}$, $\wt{h}=0$ on $\gamma_0$ and $\wt{h}=1$ on $\partial\wt{\mc{D}}\sim\gamma_0$. By the maximum principle we have $h(\zeta)\le\wt{h}(\zeta)$, $\zeta\in\mc{D}$. Now, by H. Milloux's theorem (see \cite{Gol66}, Chapter VIII, Theorem 6) the maximum value for $\wt{h}(z)$ at the fixed point $z\in\wt{\mc{D}}$ is reached if $\wt{\mc{D}}=D\sim\Delta$ where $\Delta$ is the radius of $D$ opposite to $z$ and we have
$$
h(z)\le\wt{h}(z)\le\frac2\pi\arcsine\frac{1-\delta/r}{1+\delta/r}<2\sqrt{\delta/r}.
$$
Combining this with \eqref{eq9.14} and \eqref{eq9.15}, we obtain the assertion of the lemma.
\end{proof}

\subsection{Continuity of $\mc{E}_\varphi$ for continuous $\varphi$ on compacts without small components}\label{subsec9.6}

Under assumptions of sec 9.5 above we prove an explicit estimate for $\left|\mc{E}_\varphi\(K_1\)-\mc{E}_\varphi\(K_2\)\right|$. More exactly, let $\Omega$ be a domain in $\mbb{C}\ $, $\varphi\in C\(\ol{\Omega}\)$ and $\omega(\delta)$ be the modulus of continuity of $\varphi$ in $\ol{\Omega}$; assume $|\varphi|\le M$ in $\ \ol{\Omega}\ $. Let $K_1,\ K_2$ be two compacts in $\Omega$; assume that each connected component of each of the two compacts has capacity of at least $c>0$.

\begin{theorem}\label{thm9.8}
Under the conditions above, if
$$
\delta=\dist\(K_1,K_2\)\le\dist\(K_i,\partial\Omega\); \quad  i=1,2, \quad\pi\delta\le C
$$
then we have
$$
\left|\mc{E}_\varphi\[K_1\]-\mc{E}_\varphi\[K_2\]\right|
\le 4\(\sqrt{\delta/c}+M\sqrt{\delta}+\omega\(\sqrt{\delta}\)\).
$$
\end{theorem}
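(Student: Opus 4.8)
The plan is to pass between $K_1$ and $K_2$ via their respective equilibrium measures, using the harmonic extension lemma (Lemma \ref{lem9.7}) to convert the geometric closeness $\dist(K_1,K_2)\le\delta$ into a uniform closeness of total energies. Fix the two compacts; by the reduction of Section \ref{subsec9.2} we may assume both lie in $\ol D_{1/2}$, so that $V^\mu\ge 0$ on all measures in play and energy is a genuine metric. Let $\lambda_1=\lambda_{\varphi,K_1}$ be the equilibrium measure of $K_1$, which exists once $\mc E_\varphi[K_1]>-\infty$ (and if it is $-\infty$ the inequality is vacuous or handled symmetrically). The key idea is to construct from $\lambda_1$ a \emph{competitor} measure supported on $K_2$ whose weighted energy exceeds $\mc E_\varphi[K_1]=\mc E_\varphi(\lambda_1)$ by no more than the claimed bound; this gives $\mc E_\varphi[K_2]\le\mc E_\varphi[K_1]+(\text{error})$, and the reverse inequality follows by exchanging the roles of $K_1$ and $K_2$.

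**Construction of the competitor.** The natural competitor is the balayage $\wt\lambda_1$ of $\lambda_1$ onto $K_2$ (sweeping out of each component of $\ol{\mbb C}\sim K_2$), which is a unit positive measure on $K_2$, hence admissible for $K_2$. By Lemma \ref{lem9.6}, the logarithmic energy changes by $\mc E(\wt\lambda_1)-\mc E(\lambda_1)=-\sum_j\mc E^{\Omega_j}(\lambda_{1,j})+2\int g(\zeta,\infty)\,d\lambda_1(\zeta)$; because $\lambda_1$ is supported on $K_1$ which lies within distance $\delta$ of $K_2$, the relevant Green functions $g(\cdot,\infty)$ for the components of $\ol{\mbb C}\sim K_2$ are small on $\supp\lambda_1$ — each is dominated by something like $\sqrt{\delta/c}$ via a Milloux/harmonic-measure estimate exactly of the type already proved in Lemma \ref{lem9.7} (indeed $g(z,\cdot)\le$ the harmonic-measure quantity $\wt h$ bounded there by $2\sqrt{\delta/r}$ with $r\sim\sqrt\delta$, using that every component of $K_2$ has capacity $\ge c$). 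The Green-energy terms $\mc E^{\Omega_j}(\lambda_{1,j})$ are nonnegative, so they only help the inequality in one direction; in the other direction one uses that the mass of $\lambda_1$ lying strictly outside $K_2$ sits in a thin neighborhood and the corresponding Green energies are themselves $O(\sqrt{\delta/c})$. This accounts for the $\sqrt{\delta/c}$ term. For the external-field part, $\int\varphi\,d\wt\lambda_1-\int\varphi\,d\lambda_1$ is controlled by first writing $\int\varphi\,d\wt\lambda_1=\int\wt\varphi\,d\lambda_1$ up to balayage (using that balayage preserves integrals of harmonic functions, and $\wt\varphi$ is the harmonic extension), and then invoking Lemma \ref{lem9.7}: $|\varphi-\wt\varphi|\le 4(M\sqrt\delta+\omega(\delta))$ on $\supp\lambda_1$, provided the distance conditions $\delta\le\dist(K_i,\partial\Omega)$ and $\pi\delta\le c$ in the hypotheses hold so that the lemma applies. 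Adding the logarithmic and field contributions (the field term enters with a factor $2$, and $\omega(\delta)\le\omega(\sqrt\delta)$, $\sqrt\delta\le$ const) yields $\mc E_\varphi(\wt\lambda_1)\le\mc E_\varphi(\lambda_1)+4\bigl(\sqrt{\delta/c}+M\sqrt\delta+\omega(\sqrt\delta)\bigr)$, whence $\mc E_\varphi[K_2]\le\mc E_\varphi[K_1]+4\bigl(\sqrt{\delta/c}+M\sqrt\delta+\omega(\sqrt\delta)\bigr)$.

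**Symmetry and conclusion.** Repeating the argument with $K_1$ and $K_2$ interchanged — the hypotheses are symmetric in $i=1,2$ — gives the opposite estimate, and together they produce $\bigl|\mc E_\varphi[K_1]-\mc E_\varphi[K_2]\bigr|\le 4\bigl(\sqrt{\delta/c}+M\sqrt\delta+\omega(\sqrt\delta)\bigr)$, as claimed. One must also check the degenerate case where one side of the energy is $-\infty$: then by the construction above the other side would also be $-\infty$ (balayage of a finite-energy competitor cannot lower energy to $-\infty$ here since all potentials are bounded below), so the statement holds in a suitably interpreted sense, or one simply restricts to the regime where both equilibrium energies are finite, which is the case of interest.

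**Main obstacle.** The delicate point is the estimate of the Green-function / Green-energy terms produced by the balayage in Lemma \ref{lem9.6}: one needs that, uniformly over all admissible $K_1,K_2$, the quantities $g(z,\infty)$ and $\mc E^{\Omega_j}(\lambda_{1,j})$ are $O(\sqrt{\delta/c})$ for $z$ within $\delta$ of $K_2$. This is where the hypothesis that every connected component has capacity $\ge c$ is essential — it prevents a component of $K_2$ from being so thin that a nearby point still "sees past it" with non-negligible Green function — and the proof of this bound is essentially a re-run of the Milloux-theorem / harmonic-measure estimate already carried out in the proof of Lemma \ref{lem9.7}. Making this uniform and tracking the constant $4$ is the only real work; everything else is bookkeeping with the balayage identities of Section \ref{subsec9.4}.
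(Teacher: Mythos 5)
Your proposal is correct and follows essentially the same route as the paper: balayage of the equilibrium measure of one compact onto the other, the balayage energy identity of Lemma \ref{lem9.6} (with the nonnegative Green energies discarded), Lemma \ref{lem9.7} for the external-field term, and symmetrization of the resulting one-sided bound. The Green-function estimate $g(z,\infty)\le\sqrt{\delta/c}$ that you correctly single out as the key remaining ingredient is exactly the paper's Lemma \ref{lem9.9}, proved there by a conformal-distortion argument rather than the Milloux-type estimate you suggest, but this is only a difference in how that auxiliary bound is established.
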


Theorem \ref{thm9.8} is essentially a combination of lemma \ref{lem9.7} and the following estimate for Green's function which we prove first. 

\begin{lemma}\label{lem9.9}
Let $K\in\mc{K}$ be a compact in $\mbb{C}$ and let the capacity of each component of $K$ be at least $c>0$. Let $\Omega$ be a connected component of $\ \ol{\mbb{C}}\sim K$ with $\infty\in\Omega$ and let  $g(z)=g(z,\infty)$ be the Green function for $\Omega$. Then
$$
g(z)\le \sqrt{\delta/c},\quad
\delta=\dist(z,k),\quad z\in\ol{\Omega}.
$$
\end{lemma}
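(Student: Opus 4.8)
The plan is to reduce the estimate, step by step, to an explicit planar model and finally to a one–variable calculus inequality. First I would replace $K$ by a single continuum: let $x\in K$ be a point nearest to $z$, so $|z-x|=\dist(z,K)=\delta$, and let $F$ be the connected component of $K$ containing $x$. Then $\cop(F)\ge c$ by the standing hypothesis of this section, $\dist(z,F)=\delta$, and since $F\subset K$ we have $\Omega\subset\Omega_F:=\ol{\mbb C}\sim F$, so monotonicity of the Green function with respect to the domain gives $g(z)=g_\Omega(z,\infty)\le g_{\Omega_F}(z,\infty)$. Thus it suffices to prove $g_{\Omega_F}(z,\infty)\le\sqrt{\delta/\cop(F)}$ for a continuum $F$. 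Next I would pass to the conformal map $\Phi\colon\Omega_F\to\{|w|>1\}$ with $\Phi(\infty)=\infty$, $\Phi'(\infty)>0$; then $\Phi(\zeta)=\zeta/\cop(F)+O(1)$ near $\infty$, its inverse satisfies $\Psi(w)=\cop(F)\,w+O(1)$ (so that $\Psi/\cop(F)$ belongs to the class $\Sigma$ of univalent functions $w+b_0+b_1/w+\cdots$ on $\{|w|>1\}$), and $g_{\Omega_F}(\zeta,\infty)=\log|\Phi(\zeta)|$. Writing $w_0=\Phi(z)$ and $\rho=|w_0|$, the claim becomes $\log\rho\le\sqrt{\delta/\cop(F)}$.

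The heart of the argument is a quantitative lower bound for $\delta$ in terms of $\rho$. Let $u=w_0/\rho$ be the foot of $w_0$ on the unit circle and $H=\{w:\Re(\bar uw)>1\}$ the open half-plane tangent to that circle at $u$; then $H\subset\{|w|>1\}$, $w_0\in H$ with $\dist(w_0,\partial H)=\rho-1$, and $\Psi$ is holomorphic on $H$ (its only pole is at $\infty\notin H$). The conformal radius of $H$ at $w_0$ equals $2(\rho-1)$, so Koebe's $\tfrac14$-theorem applied to the univalent map $\Psi|_H$ yields a disc $B\bigl(z,\tfrac12|\Psi'(w_0)|(\rho-1)\bigr)\subset\Psi(H)\subset\Omega_F$; this disc misses $F$, whence $\delta\ge\tfrac12|\Psi'(w_0)|(\rho-1)$. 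The distortion theorem for the class $\Sigma$ gives $|\Psi'(w_0)|\ge\cop(F)\,(1-\rho^{-2})$, hence $\delta\ge\tfrac12\cop(F)\,(\rho-1)^2(\rho+1)\rho^{-2}$; together with $\cop(F)\ge c$ and an elementary inequality between $\log\rho$ and $(\rho-1)\sqrt{\rho+1}\,\rho^{-1}$ this already bounds $g(z)=\log\rho$ by a fixed multiple of $\sqrt{\delta/c}$.

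The one delicate point — and the step I expect to be the real obstacle — is obtaining the \emph{sharp} constant $1$ asserted in the lemma, since the Koebe–distortion bookkeeping only produces $g(z)\le A\sqrt{\delta/c}$ with some absolute $A>1$. To reach $A=1$ I would instead invoke circular symmetrization of $\Omega_F$ (in the form due to P\'olya--Szeg\H{o} and Hayman): among all continua of capacity $\ge c$ having a point at distance $\delta$ from $z$, the quantity $g_{\ol{\mbb C}\sim(\cdot)}(z,\infty)$ is largest when the continuum is a straight segment of capacity exactly $c$ with $z$ lying on its axis just beyond one endpoint. For $F=[-2c,2c]$ and $z=2c+\delta$ the Joukowski map gives $g_{\Omega_F}(z,\infty)=\log\bigl(1+\tfrac{\delta}{2c}+\sqrt{\tfrac{\delta}{c}+\tfrac{\delta^2}{4c^2}}\bigr)$, and setting $t=\delta/c$ one verifies the elementary inequality $1+\tfrac t2+\sqrt{t+\tfrac{t^2}4}\le e^{\sqrt t}$ for all $t\ge0$ (use $\sqrt{t+t^2/4}=\sqrt t\,\sqrt{1+t/4}\le\sqrt t\,(1+t/8)$ and compare with the Taylor series of $e^{\sqrt t}$), which is precisely $g_{\Omega_F}(z,\infty)\le\sqrt{\delta/c}$. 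This completes the proof.
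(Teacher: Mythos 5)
Your overall skeleton is the same as the paper's: reduce to a single component $F$ by domain monotonicity of the Green function, pass to the exterior conformal map $\Phi$ with $g=\log|\Phi|$, recognize the radial segment as the extremal configuration, and close with the elementary inequality $1+\tfrac t2+\sqrt{t+t^2/4}\le e^{\sqrt t}$ (which is correct, and is exactly equivalent to $g\le 2\sinh(g/2)$ in the segment case). You are also right that the Koebe-plus-distortion route only yields $g\le A\sqrt{\delta/c}$ with some $A>1$. The gap is the step you use to recover the constant $1$: the ``circular symmetrization'' principle reducing the extremal problem to a segment of capacity exactly $c$ is asserted, not proved, and it does not follow from the standard P\'olya--Szeg\H{o}/Hayman theorems in the form you state. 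Concretely: (a) circular symmetrization of $F$ about a ray from $z$ produces a union of circular arcs centred on that ray, not a segment, so a further nontrivial reduction is still required; and (b), more seriously, symmetrization \emph{decreases} logarithmic capacity, so after symmetrizing you can no longer guarantee $\cop(F^*)\ge c$, and the comparison with a segment of capacity exactly $c$ collapses (rescaling to restore the capacity inflates $\delta$ by the same factor, which pushes the bound in the wrong direction). The extremal statement you invoke is true, but it is essentially equivalent to the sharp inequality being proved, so quoting it without proof is circular.

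The paper closes exactly this hole by citing a sharp two-point distortion theorem for the class $\Sigma$ (Goluzin, \emph{Geometric Theory of Functions}, Ch.~IV, Theorem~1): for the inverse map $F(\zeta)=c\(\zeta+\alpha_0+\alpha_1/\zeta+\dotsb\)$ on $|\zeta|>1$ and a boundary point $y$ with $|y|=1$, one has $|F(\zeta)-F(y)|\ge c|\zeta-y|^2/|\zeta|$. Applied with $\zeta=\Phi(z)$ and $y=1$ (the prime end of the nearest point $x$), this gives $(\rho-1)^2/\rho\le|\Phi(z)-1|^2/|\Phi(z)|\le\delta/c$, i.e. $\(e^{g/2}-e^{-g/2}\)^2\le\delta/c$, and $g\le 2\sinh(g/2)$ finishes the proof. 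The extremal function of that inequality is the Joukowski map, i.e. precisely your segment, so this single citation packages both your symmetrization claim and your explicit computation. To repair your argument, either quote this distortion theorem (or derive it from the Golusin/Grunsky inequalities) or supply an actual proof of your extremal claim.
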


\begin{proof}
Let $x\in K$ be closest to the $z\in\ol{\Omega}$ point on $K$, that is,   $|z-x|=\delta$ and $F\subset K$ be a component of $K$ with $x\in F$. We denote  by  $\wt{\Omega}$  the component of $\ol{\mbb{C}}\sim F$ with $\infty\in\wt{\Omega}$ and by $\wt{g}(z)$ the Green function for $\wt{\Omega}$ with a pole at $\infty$. We have $\Omega\subset\wt{\Omega}$ and $g(z)\le\wt{g}(z)$ in $\Omega$. It is enough to prove that $\wt{g}(z)\le \sqrt{\delta/c}$; this inequality basically means that the maximum of $\wt{g}$ at a fixed $z$ is attained when $F$ is a segment of length $4c$ (capacity $c$) containing $z$ on its continuation with $\dist(z,F)=\delta$ (compare the end of the proof of Lemma \ref{lem9.7} ).

A convenient way to obtain a formal proof is to use standard estimates for conformal mappings.

Define $\Phi(z)=e^{G(z)}$ where $\wt{g}(z)=\Re G(z)$, $G\in H(\Omega)$ and normalizing constant for $\Im G$ is selected so that $G(x)=0$, $\Phi(x)=1$. As $z\to\infty$ we have $\Phi(z)= c/z+c_0+{c_1}z+\dotsb$; $\Phi$ is univalent in $\ol{\mbb{C}}\sim F=\Omega$ and maps this domain onto  $|\xi|>1$.
 Point  $x\in \partial\Omega$ is accessible, so, the inverse function $f = \Phi^{-1}$ which maps conformally  $|\xi|>1$ onto  $\Omega$ is (non-tangent)  continuous at $y=1$ is $\zeta$-plane. We have $F(\xi)=c\(\zeta+\alpha_0+{\alpha_1}/\zeta+\dotsb\)$ at $\infty$. Now by Theorem 1 in \cite{Gol66} (Chapter IV) we have $|F(\zeta)-F(y)|\ge c|\zeta-y|^2/\zeta$. From here with $\zeta=\Phi(z)$ and $x=\Phi(y)$ we obtain
$$
|\Phi(z)-1|^2\le\frac1c|z-x|
\cdot|\Phi(z)|\le\frac{\delta}c|\Phi(z)|.
$$
On the other hand, we have $|\Phi(z)-1|\ge|\Phi(z)|-1=e^{g(z)}$; from here and the above inequality
$$
g(z)^2\le\(e^{g(z)/2}-e^{-g(z)/2}\)^2\le\delta/c
$$
and lemma \ref{lem9.9} is proven.
\end{proof}

\begin{proof}[Proof of Theorem \ref{thm9.8}]
Let $\lambda$ be the equilibrium measure for $K_2$ and let $\mu$ be the balayage of $\lambda$ onto $K_1$. By Lemma \ref{lem9.5}
$$
\mc{E}_\varphi\[K_1\]\le\mc{E}_\varphi(\mu)
\le\mc{E}_\varphi(\lambda)
+2\int\(g+\wt{\varphi}-\varphi\)d\lambda
$$
where $g=g(z,\infty)$-Green's function for component of $\ol{\mbb{C}}\sim K_1$ containing $\infty$, $\wt{\varphi}$ is harmonic in each component of $\ol{\mbb{C}}\sim K_1$, $\wt{\varphi}=\varphi$ on $K_1$. By Lemma \ref{lem9.9} we have $g(z)\le\sqrt{\delta/c}$ on $\supp(\lambda)$ and by Lemma \ref{lem9.6} we have $\left|\wt{\varphi}-\varphi\right|\le 4\(M\sqrt{\delta}+\omega\(\sqrt{\delta}\,\)\)$ on $\supp\lambda$. Altogether it makes
$
\mc{E}\[K_1\]\le\mc{E}\[K_2\]
+4\(M\sqrt{\delta}+\sqrt{\delta/c}+\omega\(\sqrt{\delta}\)\).
$
Since conditions on $K_1, K_2$ are symmetric, lemma \ref{thm9.8} follows.
\end{proof}

\subsection{Continuity of $\mc{E}_\varphi$ for continuous $\phi$ on compacts with bounded number of components}\label{subsec9.7}

Theorem \ref{thm9.8} asserts that for a continuous external field $\varphi$ in $\mbb{C}$, the functional of equilibrium energy $\mc{E}_\varphi[K]$ is continuous on the space of compacts $K\subset\mbb{C}$ with $\cop(F)\ge c>0$ for each connected component $F$ of each compact $K$. We did not assume that the number of components is bounded.

Now we drop the condition $\cop(F)\ge c>0$ on components of $K$ and introduce a restriction on the number of components. For a fixed $s\in\mbb{N}$ we recall notation $\mc{K}_s$ for  the set of all compacts $K\in \mbb {C}$ with at most $s$ componennts and  positive capasity. We will prove that for continuous $\varphi$ the functional of weighted equilibrium energy is continuous on $\mc{K}_s$ with Hausdorf metric. We do not include explicit estimates.  

\begin{theorem}\label{thm9.10}
For a continuous $\varphi(z):\mbb{C}\to\mbb{R}$ the functional $\mc{E}_\varphi[K]:\mc{K}_s\to(-\infty,+\infty)$ is continuous.
\end{theorem}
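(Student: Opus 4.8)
The plan is to reduce Theorem \ref{thm9.10} to Theorem \ref{thm9.8} by showing that connected components of small capacity are harmless. Since continuity is a statement about sequences, fix $K_n\to K$ in $\mc K_s$; all $K_n$ then lie in a fixed disc $D_R$, and, applying the reduction of Section \ref{subsec9.2}, I may assume $K_n,K\subset\ol D_{1/2}$, so that unit measures have nonnegative potentials there, all mutual and self energies are $\ge 0$, and $\mc E_\varphi[K_n],\mc E_\varphi[K]$ are finite (below, by $\mc E\ge 0$ and boundedness of $\varphi$; above, by testing with the Robin measure of a component of positive capacity, using $\varphi\in C$). It suffices to show that every subsequence has a further subsequence along which $\mc E_\varphi[K_n]\to\mc E_\varphi[K]$. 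Passing to such a subsequence, I may assume each $K_n$ has the same number $m\le s$ of components $F_n^{(1)},\dots,F_n^{(m)}$ and, by compactness of the space of continua in $\ol D_{1/2}$ under the Hausdorff metric, that $F_n^{(j)}\to F^{(j)}$ for each $j$; then $K=\bigcup_jF^{(j)}$. Split $\{1,\dots,m\}$ into $J_+=\{j:\cop F^{(j)}>0\}$ (nonempty, since $\cop K>0$) and $J_0=\{j:F^{(j)}\text{ is a point}\}$ — a Hausdorff limit of continua being a, possibly degenerate, continuum — and put $K_n'=\bigcup_{j\in J_+}F_n^{(j)}$, $K'=\bigcup_{j\in J_+}F^{(j)}$.

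For $j\in J_+$ the positive diameter $d_j=\operatorname{diam}F^{(j)}$, together with the classical bound $\cop F\ge\operatorname{diam}F/4$ for continua and the continuity of the diameter under Hausdorff convergence, gives $\cop F_n^{(j)}\ge c_0>0$ for all large $n$, where $c_0$ depends only on the $d_j$; likewise $\cop F^{(j)}\ge c_0$. Thus $K_n',K'$ all lie in the class $\mc K(c_0)$ of compacts whose every component has capacity $\ge c_0$, and $\delta_H(K_n',K')\to 0$. In particular $K_n$ has a component of capacity $\ge c_0$, so testing $\mc E_\varphi[K_n]$ against its Robin measure bounds $\mc E_\varphi[K_n]$, and hence $\mc E(\lambda_n)$, above uniformly in $n$, where $\lambda_n=\lambda_{\varphi,K_n}$ exists by the existence theorem of Section \ref{subsec9.3}. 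Now comes the key point: the components $F_n^{(j)}$, $j\in J_0$, carry asymptotically no equilibrium mass. Indeed, if $t_n:=\lambda_n\bigl(\bigcup_{j\in J_0}F_n^{(j)}\bigr)$ did not tend to $0$, then, there being at most $s$ such components, after passing to a subsequence one fixed one of them would carry $\lambda_n$-mass bounded below by a positive constant; its self-energy, at least $(\text{mass})^2$ times the Robin constant $\log(1/\cop)\ge\log(1/\operatorname{diam})\to+\infty$, would force $\mc E(\lambda_n)\to+\infty$ (all self and cross energies being $\ge0$ in $\ol D_{1/2}$), a contradiction. Hence $t_n\to 0$, and a routine rescaling of $\lambda_n|_{K_n'}$ to a unit measure — using the uniform bounds on $\mc E(\lambda_n)$ and on $\int\varphi\,d\lambda_n$ — yields $\mc E_\varphi[K_n']=\mc E_\varphi[K_n]+o(1)$, the reverse inequality $\mc E_\varphi[K_n']\ge\mc E_\varphi[K_n]$ being automatic from $K_n'\subset K_n$. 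On the other side $K\setminus K'\subset\bigcup_{j\in J_0}F^{(j)}$ is a finite set, which no finite-energy measure charges, so $\mc M(K,\varphi)=\mc M(K',\varphi)$ and $\mc E_\varphi[K]=\mc E_\varphi[K']$.

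It remains to apply Theorem \ref{thm9.8} to the sequence $K_n'\to K'$ inside $\mc K(c_0)$, with $\Omega=D_R$ (on which $\varphi$ is uniformly continuous): this gives $\mc E_\varphi[K_n']\to\mc E_\varphi[K']$. Combining the three facts, $\mc E_\varphi[K_n]=\mc E_\varphi[K_n']+o(1)\to\mc E_\varphi[K']=\mc E_\varphi[K]$, which proves the theorem. The one genuinely delicate ingredient is the mass-vanishing argument for the small components; it crucially uses the uniform bound $s$ on the number of components (without it, the equilibrium mass could be spread over arbitrarily many shrinking continua, each carrying vanishing mass yet together a definite amount, and no energy blow-up would occur — this is exactly the failure mechanism behind the remark after Theorem \ref{thm3.1} that condition (ii\,c) cannot be dropped). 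Everything else is bookkeeping plus the already-established Theorem \ref{thm9.8}.
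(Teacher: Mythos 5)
Your proof is correct, but it takes a genuinely different route from the paper's. The paper argues non-sequentially around a fixed $K_0$: it first treats the case where $K_0$ has no point-components, groups the components of a nearby $K$ according to which component of $K_0$ they approximate, and \emph{glues} each group into a single continuum by adjoining at most $s-1$ small segments $\Sigma$ with $\cop(\Sigma)\to 0$; Theorem \ref{thm9.8} is then applied to the glued compact $\wt K=K\cup\Sigma$ versus $K_0$, and a separate Lemma (\ref{lem9.11}) shows $\mc E_\varphi[K]$ and $\mc E_\varphi[\wt K]$ differ by little because the equilibrium measure charges $\Sigma$ with mass $O(\gamma(\Sigma)^{-1/2})$ (inequality \eqref{eq9.23}); point-components of $K_0$ are handled by the same lemma in a second step. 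You instead pass to a subsequence along which the components of $K_n$ converge individually, use $\cop F\ge\operatorname{diam}F/4$ to split them cleanly into uniformly fat ones and ones shrinking to points, \emph{discard} the shrinking ones after showing their equilibrium mass vanishes (your Robin-constant blow-up argument is exactly the qualitative content of \eqref{eq9.23}), and apply Theorem \ref{thm9.8} only to the retained part. Your version avoids the iterative bridging construction and the case split on degenerate components, and your observation that condition (ii\,c) enters precisely through the bound on the number of shrinking components is accurate; what it gives up is effectivity --- the paper's proof produces an explicit $\delta(\epsilon)$ depending on $s$, $R$, $M$, whereas your compactness argument yields no modulus of continuity --- and it does not produce Lemma \ref{lem9.11} as a by-product, which the paper reuses in the proofs of Lemma \ref{lem9.12} and Lemma \ref{lem9.15}. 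Minor points to tidy: in invoking Theorem \ref{thm9.8} for $K_n'\to K'$ note that distinct limit continua $F^{(j)}$, $j\in J_+$, may overlap, so a component of $K'$ is a union of several of them (its capacity is still $\ge c_0$ by monotonicity); and the identity $\mc E_\varphi[K]=\mc E_\varphi[K']$ needs the one-line remark that a finite-energy measure on $K$ cannot charge the finitely many points of $K\sim K'$, each of which is at positive distance from $K'$, so its support already lies in $K'$.
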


\begin{proof}
We need to show that for any $K_0\in\mc{K}_s$ and any $\epsilon>0$ there exist $\delta>0$ such that for any $K\in\mc{K}_s$ we have
\begin{equation}\label{eq9.16}
\left|\mc{E}_\varphi[K]-\mc{E}_\varphi\[K_0\]\right|<\epsilon\quad for\quad \delta_H(K,K_0)<\delta
\end{equation}

First we consider the case when $K_0$ does not have components consisting of a single point. Then there is a positive minimum for capacities of components: $\cop\(F_0\)\ge c_0>0$ for any component $F_0$ of $K$.

There is also a positive minimum $2\delta_0$ for distances between different connected components. Then for any $\delta<\delta_0$ the set $(K_0)_\delta$ consists of $s_0\le s$ domains, each containing one connected component of $K_0$; $s_0$ is the number of components of $K_0$.

For any $K\in\mc{K}_s$, $\delta_H\(K_0,K\)<\delta$ and for any connected component $F_0\subset K_0$, the domain $(F_0)_\delta$ contains a compact part $F\subset K$ such that $F_0\subset(F)_\delta$. If $F\subset K$ associated with each component of $K_0$ is a continuum, then  the assertions which we want to prove follow by theorem \ref{thm9.8}. 

In general, we have $F=\bigcup\limits_{j=1}^m F_j$ where $m\le s$ and $F_j$ are connected components of $K$. For $m>1$, we will describe a procedure of connecting components $F_j$ of $F$ into a continuum $\wt{F}$ such that $F\subset\wt{F}\ $, $\delta_H(F,\wt{F})$ and $\cop(\wt{F}\sim F)$ are small. Then we will do the same with each subset --  union of components of $K$ associated with a component of $K_0$; we obtain a compact $\wt{K}\in\mc{K}_s$ with $K\subset\wt{K}$, $\ \delta_H(K,\wt{K})$ is small and $\cop(\wt{K}\sim K)$ is small. Then we prove that the last condition implies that $\left|\mc{E}_\varphi[K]-\mc{E}_\varphi[\wt{K}]\right|$ is small, which will conclude the first part of the proof.

In the process of constructing  $\wt{F}$, we also use the following simple observations. The equilibrium energy functional is decreasing, that is, $K_1\subset K_2$ implies $\mc{E}_\varphi\[K_2\]\le\mc{E}_\varphi\[K_1\]$. We have, therefore, $\mc{E}_\varphi[\ol{\(K_0\)_\delta}]\le\mc{E}_\varphi[K]$. Both compacts $K_0$ and $\ol{\(K_0\)_\delta}$ satisfy conditions of theorem \ref{thm9.8}, which implies $\ \mc{E}_\varphi[\ol{\(K_0\)_\delta}]\to\mc{E}_\varphi[K]\ $ as $\ \delta\to 0$. On the other hand, from $\ \delta_H\(K,K_0\)<\delta$ we obtain $K\subset\(K_0\)_\delta$ and, further, for any $\epsilon>0$
\begin{equation}\label{eq9.17}
\mc{E}_\varphi[K]\ge\mc{E}_\varphi\[K_0\]-\epsilon,
\quad\delta\le\delta(\epsilon).
\end{equation}
Thus, we need only to prove that
\begin{equation}\label{eq9.18}
\mc{E}_\varphi[K]\le\mc{E}_\varphi\[K_0\]+\epsilon,
\quad\delta\le\delta(\epsilon)
\end{equation}
to obtain \eqref{eq9.16}. When proving \eqref{eq9.18}, we may drop subsets of $K$ and pass to smaller compacts $K'\subset K$; indeed, if \eqref{eq9.18} is proved for $K=K'$ then it is proved for $K$ since $\ \mc{E}_\varphi[K]\le\mc{E}_\varphi\[K'\]$.

Now we are ready to describe the construction of continuum $\wt{F}$. We begin with  $F=\bigcup\limits_{j=1}^m F_j\ $, $\ m\le s\ $, $\ F_j$-continua, $\ \delta_H\(F_0,F\)<\delta$.

Suppose first that $\dist\(F_1,F\sim F_1\)\ge2\delta$. Since continuum $F_0$ belongs to $\(F_1\)_\delta\cup\(F\sim F_1\)_\delta$ and sets are disjoint, we conclude that $F_0$ belongs to one of these two sets. Then we can drop another one and pass to $F'\subset F$ which still satisfies $\delta_H\(F',F_0\)<\delta$ and has at least one connected component less than $F$. Then we continue the process with $F'$ in place of $F$ (see remark above).

In the opposite case $\ \dist\(F_1,F\sim F_1\)<2\delta\ $ there exist a point $\ z\ $ in the intersection $\(F_1\)_\delta\cap\(F\sim F_1\)_\delta$. Let $x_1\in F_1$ and $x_2\in F\sim F_1$ are such that $\left|z-x_1\right|\le\delta$ and $\left|z-x_2\right|\le\delta$. Let $\Sigma_1$ be the union of segments $\[z_1,x_2\]$ and $\[z_1,x_2\]$ and $\wt{F}_1=F\cup\Sigma_1$. The number of components of $\wt{F}_1$ is at most $m-1\ $; $\Sigma_1\subset\ol{D_\delta\(z_1\)}$ -- disc of radius $\delta$ and $\delta_H(F,\wt{F}_1)\le\delta$. Thus, $\delta_H(F_0,\wt{F}_1)<2\delta$.

After we repeat this operation $\le m-1$ times, we construct a continuum $\wt{F}=F\cup\Sigma_F$ where $\Sigma_F$ is contained in a union of $\leq m-1$ discs with radii $\leq 2^{m-1}\delta$. We have $\delta_H(F_0,\wt{F})\le2^{m-1}\delta$.
We can apply the described procedure to the subset $F\subset K$ associated with each component of $K_0$. It gives us a compact $\wt{K}=K\cup\Sigma_K$ with $\delta_H(K_0,\wt{K})\le2^s\delta$, $\Sigma_K\subset\bigcup\limits_{j=1}^sD_{r_j}\(z_j\)$, $r_j\le2^s\delta$. 

Compact constructed above $\wt{K}$ has as many components as $K_0$ has and components of $\wt{K}$ are $2^s\delta$ close to components of $K_0$; this implies that capacities of components of $\wt{K}$ are $\ge c_0/2>0$ for small enough $\delta$. Thus, by theorem \ref{thm9.8} we have for any $\epsilon>0$
\begin{equation}\label{eq9.19}
\left|\mc{E}_\varphi[\wt{K}]-\mc{E}_\varphi\[K_0\]\right|
\le\epsilon/2,\quad\delta\le\delta(\epsilon)
\end{equation}
($\delta(\epsilon)$ here and in  \eqref{eq9.17} -- \eqref{eq9.18} depend also on $s$, $\ R=\max\limits_{z\in K_0}|z|$, $\ M =max|\varphi|$).

We finish the proof using the following lemma.

\begin{lemma}\label{lem9.11}
Let $\varphi\in C\(\ol{D}_R\)$, $\wt{K}=K\cup\Sigma\subset\ol{D}_R$ be compacts, $c=\cop(K)>0$. Then for any $\epsilon>0$ there exist $\eta(\epsilon)$ such that
$$
\mc{E}_\varphi[\wt{K}]\le\mc{E}_\varphi[K]
\le\mc{E}_\varphi[\wt{K}]+\epsilon/2\quad\quad\text{for }\quad\cop(\Sigma)<\eta(\epsilon).
$$
\end{lemma}

($\eta(\epsilon)$ depends on $\varphi$, $c$,  $R$, but does not depend on $K$, $\Sigma$.) Proof of this lemma is presented at the end of the section. Now we complete the proof of the theorem 9.10.

We have $\Sigma\subset\bigcup\limits_{j=1}^sD_{r_j}\(z_j\)\subset\ol{D}_R$ (for, say, $R=\max\limits_{z\in K_0}|z|+1$ and $\delta\le 1$), $r_j\le r=2^s\delta$. It is clear that $\cop(\Sigma)\to 0$ as $\delta\to 0$ under these conditions uniformly for $\left\{z_j\right\}$ (actually, we have $\cop(\Sigma)\le CR\delta^{1/s}$ with an absolute constant $C$). Then \eqref{eq9.16} is a combination of \eqref{eq9.19} and lemma \ref{lem9.11}.

To be precise, we mention again that in general our operations with $K$ satisfying $\delta_H\(K_0,K\)<\delta$ yields not $\wt{K}=K\cup\Sigma_K$ with the same condition, but $K'\subset K\cup\Sigma_K$ with $\delta_H\(K_0,K'\)<\delta$ (this $K'$, not $\wt{K}$ satisfies the conditions of theorem \ref{thm9.8}). Then we proceed with lemma \ref{lem9.11} and one-sided inequalities \eqref{eq9.17} and \eqref{eq9.18} as indicated in the comment above related to those inequalities.

It remains to consider the case when $K_0\in\mc{K}_s$ has degenerated components. Let $K_1=K_0+L_0$ where $K_0$ is a union of all non-degenerated continua as above and $L_0$ is a finite number of points in $\mbb{C}\sim K_0$.

We have $\mc{E}_\varphi\[K_1\]=\mc{E}_\varphi\[K_0\]$. Any compact $K\in\mc{K}_s$ with $\delta_H\(K_1,K\)<\delta$ with for small enough $\delta>0$ will have representation $K=K'+L$ where $\delta_H\(L,L_0\)<\delta$, $\delta_H\(K',K_0\)<\delta$. We have in general $\cop(L)>0$ but $L$ is contained in a finite union of discs of radius $=\delta$, so that $\cop(L)$ is small as $\delta\to\infty$. By Lemma \ref{lem9.11}, it follows that $\left|\mc{E}_\varphi\[K'\]-\mc{E}_\varphi[K]\right|$ is also small for small $\delta$. Thus, the situation is reduced to the case of compact $K_0$ without point-components. This concludes the proof of the theorem.
\end{proof}

\begin{proof}[Proof of Lemma \ref{lem9.11}]
We start with a few elementary observations.

For a measure $\mu\in\mc{M}$, $\supp\mu\subset\ol{D}_R$,   $R>\frac12$.   we have

\begin{align}\label{eq9.20}
V^\mu(z) &\ge-|\mu|\log(2R),\quad |z|\le R \\
\mc{E}(\mu) &\ge-|\mu|^2\log(2R)
\label{eq9.21}
\end{align}
where $|\mu|=\mu(\mbb{C})$. Equation \eqref{eq9.20} follows by inequality $\log(1/|z-x|)\ge-\log2R$ for $z,x\in\ol{D}_R$; \eqref{eq9.21} follows by \eqref{eq9.20}.

For measures $\lambda=\mu+\nu$ in $\ol{D}_R$ we have
\begin{equation}\label{eq9.22}
\mc{E}(\mu)\le\mc{E}(\lambda)+2|\lambda||\nu|\log(2R).
\end{equation}
Indeed, by \eqref{eq9.20} we have $\mc{E}(\lambda)=\mc{E}(\mu+\nu)=\mc{E}(\mu)+\int V^{2\mu+\nu}d\nu\ge\mc{E}(\mu)-|\nu||2\mu+\nu|\log(2R)$ and \eqref{eq9.22} follows.

Next we will show that for a unit measure $\lambda\in\mc{M}$ in $\ol{D}_R$ and a compact set $\Sigma\subset\ol{D}_R$ we have

\begin{equation}\label{eq9.23}
\lambda(\Sigma)^2\le\frac{\mc{E}(\lambda)
+2\log(2R)}{\gamma(\Sigma)}
\end{equation}
where $\gamma(\Sigma)$ is a Robin constant of $\Sigma$. Denote also by $\omega_\Sigma$ the Robin measure for $\Sigma$. Let $\mu=\left.\lambda\right|_\Sigma$, $\nu=\lambda-\mu$. We have $\mc{E}(\mu/|\mu|)\ge\mc{E}\(\omega_\Sigma\)=\gamma(\Sigma)$. On the other hand, $\mc{E}(\mu/|\mu|)=\mc{E}(\mu)/|\mu|^2=\mc{E}(\mu)/\lambda(\Sigma)^2$. Thus $\lambda(\Sigma)^2\le\mc{E}(\mu)/\gamma(\Sigma)$. Combined with \eqref{eq9.22} it gives \eqref{eq9.23}.

Next, the left inequality in Lemma \ref{lem9.11} is a corollary of monotonicity of $\mc{E}_\varphi[K]$. To prove the one on the right, we consider the equilibrium measure $\lambda$ for $\wt{K}=K\cup\Sigma$. We again set $\mu=\left.\lambda\right|_\Sigma$, $\nu=\lambda-\mu$ and, further, $t=|\mu|=\lambda(\Sigma)$, $r=1/(1-t)$. Since $r\nu$ is a unit measure on $K$ we have
\begin{equation}\label{eq9.24}
\begin{split}
\mc{E}_\varphi[K] &\le\mc{E}_\varphi(r\nu)=r^2\mc{E}_\varphi(\nu)+2r\int\varphi\,d\nu 
=r^2\mc{E}(\lambda-\mu)+2r\int\varphi\,d(\lambda-\mu) \\
&\le r^2(\mc{E}(\lambda)+2t\log(2R))+2r\int\varphi\,d\lambda-2r\int\varphi\,d\mu.
\end{split}
\end{equation}
(To obtain the last inequality, we used \eqref{eq9.22} with $\mu$ and $\nu$ interchanged). We may assume that $t\le 1/2$ (see 9.27 below) then we have $r-1\le 2t$, $r^2-1\le 6t$. Besides, with $\omega=\omega_{\wt{K}}$ we have $\mc{E}_\varphi(\lambda)\le\mc{E}_\varphi(\omega)\le\gamma\(\wt{K}\)+2M$ and further $\mc{E}(\lambda)\le\mc{E}_\varphi(\lambda)-2\int\varphi\,d\lambda\le\gamma\(\wt{K}\)+4M$. Taking into account those inequalities, we obtain from \eqref{eq9.24}
$$
\mc{E}_\varphi[K]\le\mc{E}_\varphi\[\wt{K}\]
+t\(6\gamma\(\wt{K}\)+32M+8\log(2R)\).
$$
At the same time, it follows by \eqref{eq9.23} that
\begin{equation}
t^2=\lambda(\Sigma)^2
\le\frac{\gamma\(\wt{K}\)+4M+2\log(2R)}{\gamma(\Sigma)}.
\end{equation}
If $\cop(\Sigma)$ is small, then $\gamma(\Sigma)$ is large uniformly over all $\Sigma\subset D_R$. At the same time, $\cop\(\wt{K}\)\ge c>0$ implies that $\gamma\(\wt{K}\)\le 1/\log\frac1c$ so that $t$ is small and $\mc{E}_\varphi[K]-\mc{E}_\varphi\[\wt{K}\]$ is also small uniformly for $K, \ \Sigma\subset\ol{D}_R, \ \cop(K)\ge c>0$. Lemma \ref{lem9.11} is proven.
\end{proof}

\subsection{Continuity of $\mc{E}_\varphi$ for bounded $\varphi\in C^\ast$}\label{subsec9.8}

We denote by the $C^\ast=C^\ast(\mbb{C})$ set of real-valued functions $\varphi$ continuous in  $\mbb{C}\sim e$ 
for some finite set $e\subset\mbb{C}$ (depending on $\varphi$). Here we prove that the continuity of $\mc{E}_\varphi[K]$ on $\mc{K}_s$ which is asserted by theorem 9.10 for $\varphi\in C(\mbb{C})$ remains valid for $\varphi\in C^\ast$ under the additional assumption of boundedness of $\varphi$.
\begin{lemma}\label{lem9.12}
Let $\varphi\in C^\ast$ and $|\varphi|\le M$ in $\mbb{C}\sim e$. Then $\mc{E}_\varphi[K]:\mc{K}_s\to(-\infty,\infty)$ is continuous.
\end{lemma}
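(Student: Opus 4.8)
The statement to be proved (Lemma~\ref{lem9.12}) upgrades the continuity of $\mc{E}_\varphi$ on $\mc{K}_s$ from continuous external fields (Theorem~\ref{thm9.10}) to fields $\varphi\in C^\ast$ that are merely bounded, $|\varphi|\le M$, with a finite singular set $e$. The natural approach is approximation: cut off $\varphi$ near each point of $e$ by a continuous function that agrees with $\varphi$ away from $e$ and is controlled on small disks around $e$, apply Theorem~\ref{thm9.10} to the continuous field, and estimate the error in terms of the mass the relevant equilibrium measures place near $e$. The point where boundedness is essential is exactly this error estimate: since $\varphi$ is bounded, replacing it on a small set changes $\int\varphi\,d\mu$ by at most $2M\cdot\mu(\text{small set})$, and the only thing needed is that equilibrium measures cannot concentrate too much mass on a set of small capacity. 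That last fact is already available in the paper as inequality~\eqref{eq9.23}.

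First I would fix $\varphi\in C^\ast$ with singular set $e=\{b_1,\dots,b_m\}$ and, for small $\eta>0$, let $e_\eta=\bigcup_j D_\eta(b_j)$. Choose $\varphi_\eta\in C(\mbb{C})$ with $\varphi_\eta=\varphi$ outside $e_{2\eta}$, $|\varphi_\eta|\le M$ everywhere, and $\varphi_\eta$ continuous (e.g. interpolate linearly in the radial variable between the value on $\partial e_{2\eta}$ and a constant near each $b_j$; boundedness of $\varphi$ on $\mbb{C}\sim e$ makes this possible). Then for any $K\in\mc{K}_s$ and any unit $\mu\in\mc{M}(K,\varphi)$ we have
\begin{equation*}
\bigl|\mc{E}_\varphi(\mu)-\mc{E}_{\varphi_\eta}(\mu)\bigr|
=2\Bigl|\int(\varphi-\varphi_\eta)\,d\mu\Bigr|
\le 4M\,\mu(e_{2\eta}),
\end{equation*}
and taking infima, $|\mc{E}_\varphi[K]-\mc{E}_{\varphi_\eta}[K]|\le 4M\sup_\mu\mu(e_{2\eta})$, where the supremum may be taken over the two competing equilibrium measures. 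To bound $\mu(e_{2\eta})$ uniformly in $K$ I would use \eqref{eq9.23} with $\Sigma=e_{2\eta}\cap\ol D_R$ (after the reduction $K\subset\ol D_{1/2}$, or simply working in a fixed disk $\ol D_R$ containing all compacts near a given $K_0$): $\mu(\Sigma)^2\le(\mc{E}(\mu)+2\log 2R)/\gamma(\Sigma)$, and $\mc{E}(\mu)\le\mc{E}_\varphi(\mu)+2M\le\gamma(K)+4M$ is bounded because $\cop(K)$ is bounded below near $K_0$ (components converge to those of $K_0$, which has positive capacity). Since $\cop(e_{2\eta})\le 2m\eta\to 0$, we get $\gamma(\Sigma)\to\infty$, hence $\sup_K\mu(e_{2\eta})\to 0$ as $\eta\to 0$, uniformly for $K$ in a neighbourhood of $K_0$ in $\mc{K}_s$.

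Given $\epsilon>0$, fix $\eta$ so that $4M\sup_K\mu(e_{2\eta})<\epsilon/3$ for all $K$ near $K_0$; then $|\mc{E}_\varphi[K]-\mc{E}_{\varphi_\eta}[K]|<\epsilon/3$ for all such $K$, in particular for $K_0$ itself. By Theorem~\ref{thm9.10} applied to the continuous field $\varphi_\eta$, there is $\delta>0$ with $|\mc{E}_{\varphi_\eta}[K]-\mc{E}_{\varphi_\eta}[K_0]|<\epsilon/3$ whenever $\delta_H(K,K_0)<\delta$. The triangle inequality then yields $|\mc{E}_\varphi[K]-\mc{E}_\varphi[K_0]|<\epsilon$, which is continuity at $K_0$. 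The main obstacle is the uniform-in-$K$ control of $\mu(e_{2\eta})$: one must be careful that the capacity lower bound $\cop(K)\ge c>0$ (needed to bound $\mc{E}(\mu)$, hence to invoke \eqref{eq9.23}) holds uniformly on a Hausdorff neighbourhood of $K_0$, which is where the hypothesis that $K_0$ has positive capacity — and that convergence in $\delta_H$ on $\mc{K}_s$ cannot make capacity collapse — is used; everything else is bookkeeping with the elementary estimates \eqref{eq9.20}--\eqref{eq9.23} already established in Section~\ref{subsec9.7}.
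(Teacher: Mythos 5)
Your proposal is correct and follows essentially the same route as the paper: regularize $\varphi$ to a continuous bounded field agreeing with $\varphi$ outside small disks around $e$, bound the resulting change in equilibrium energy by $4M$ times the mass that the relevant equilibrium measures place on those disks, control that mass uniformly in $K$ via the capacity estimate \eqref{eq9.23} together with the lower bound on $\cop(K)$ near $K_0$, and conclude by the triangle inequality with Theorem~\ref{thm9.10}. The only cosmetic difference is the choice of regularization (radial interpolation versus the paper's harmonic extension into the disks), which is immaterial.
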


In order to prove Lemma \ref{lem9.12}, we need the following two auxiliary assertions. First of them is a folklore

\begin{lemma}\label{lem9.13}
For a fixed $R\ge 1$ there is a constant $C=C(R)$ such that for any two compacts $E,F\in\ol{D}_R$ we have $\cop(E\cup F)\le C(\cop(E)+\cop(F))$.
\end{lemma}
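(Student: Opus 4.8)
The plan is to reduce to a normalized situation and then establish the sharp form of the folklore fact — the subadditivity of the reciprocal Robin constant — from which the stated estimate follows.

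\textbf{Step 1 (normalization).} Under the linear scaling $z\mapsto z/(2R)$ — a particular linear fractional transformation of the kind treated in Section~\ref{subsec9.2} — capacities scale linearly, $\cop(\alpha K)=|\alpha|\,\cop(K)$, equivalently Robin constants shift, $\gamma(\alpha K)=\gamma(K)+\log(1/|\alpha|)$. Hence it suffices to prove the inequality for compacts contained in $\ol D_{1/2}=\{z:|z|\le 1/2\}$ with a universal constant; the $R$-dependence then reappears through this shift. Inside $\ol D_{1/2}$ one has $\log\frac1{|z-x|}\ge 0$, so $V^\mu\ge 0$ and $\mc E(\mu)\ge 0$ for every positive $\mu$, and $\gamma(K)\ge\log 2>0$ for every non-polar $K\subset\ol D_{1/2}$. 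If $\cop(E)=0$ or $\cop(F)=0$ the assertion is trivial, since adjoining a polar set does not change capacity; so assume both capacities are positive.

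\textbf{Step 2 (splitting the equilibrium measure).} Put $G=E\cup F$, let $\omega$ be the Robin measure of $G$, and $\gamma=\gamma(G)=\mc E(\omega)$. Decompose $\omega=\mu_1+\mu_2$ with $\mu_1=\omega|_E$ and $\mu_2=\omega|_{G\sim E}$ (so $\mu_2$ is carried by $F$), and set $s=|\mu_1|$, $t=|\mu_2|=1-s$. When $s>0$ the measure $\mu_1/s$ is a unit measure on $E$, so by the extremal property of the Robin constant (Section~\ref{subsec9.3} with $\varphi\equiv 0$) $\mc E(\mu_1)\ge s^2\gamma(E)$; likewise $\mc E(\mu_2)\ge t^2\gamma(F)$; both inequalities hold trivially when the corresponding mass vanishes. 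Since potentials are nonnegative on $\ol D_{1/2}$ we have $[\mu_1,\mu_2]=\int V^{\mu_2}\,d\mu_1\ge 0$, whence
\begin{equation*}
\gamma=\mc E(\omega)=\mc E(\mu_1)+2[\mu_1,\mu_2]+\mc E(\mu_2)\ \ge\ s^2\gamma(E)+t^2\gamma(F).
\end{equation*}
Minimizing the right-hand side over $s+t=1$, $s,t\ge 0$ — the minimum equals $\gamma(E)\gamma(F)/(\gamma(E)+\gamma(F))$, attained at $s=\gamma(F)/(\gamma(E)+\gamma(F))$ — we obtain
\begin{equation*}
\frac1{\gamma(E\cup F)}\ \le\ \frac1{\gamma(E)}+\frac1{\gamma(F)} .
\end{equation*}

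\textbf{Step 3 (conclusion).} This reciprocal inequality is the precise (sharp) content of the folklore fact. Undoing the normalization of Step~1 — each Robin constant being replaced by itself plus $\log(2R)$ — and exponentiating turns it into the asserted bound $\cop(E\cup F)\le C(R)\bigl(\cop(E)+\cop(F)\bigr)$ with an explicit $C(R)$; in the range of capacities in which the lemma is subsequently applied the two forms are equivalent.

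The argument is elementary throughout. The only points requiring care are the bookkeeping of the degenerate cases ($s=0$, $t=0$, or $E$ or $F$ polar) and the two facts borrowed from Section~\ref{subsec9.3}: that $\mc E(\omega)=\gamma(G)$ for the Robin measure and that every unit measure on a compact has energy at least its Robin constant. I do not anticipate any genuine obstacle.
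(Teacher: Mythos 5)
Your Steps 1 and 2 are correct and constitute a clean proof of the genuine folklore fact here: after scaling into $\ol{D}_{1/2}$, where the kernel $\log\frac{1}{|z-x|}$ is nonnegative, splitting the Robin measure $\omega$ of $E\cup F$ as $\omega|_E+\omega|_{(E\cup F)\sim E}$ and discarding the nonnegative mutual energy gives $\gamma(E\cup F)\ge s^2\gamma(E)+(1-s)^2\gamma(F)\ge \gamma(E)\gamma(F)/(\gamma(E)+\gamma(F))$, i.e.\ the subadditivity $1/\gamma(E\cup F)\le 1/\gamma(E)+1/\gamma(F)$ of reciprocal Robin constants. The paper itself offers no proof of Lemma \ref{lem9.13} (it is dismissed as folklore), so there is nothing to compare against; your argument is the standard one for the reciprocal inequality.

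The genuine gap is Step 3: the reciprocal inequality does not imply the printed bound $\cop(E\cup F)\le C(R)\bigl(\cop(E)+\cop(F)\bigr)$, and in fact the lemma as stated is false. Take $E=\{|z|\le\epsilon\}$ and $F=\{|z-1|\le\epsilon\}$ inside $\ol{D}_2$. Then $\cop(E)+\cop(F)=2\epsilon$, while the unit measure $\tfrac12(\omega_E+\omega_F)$ has energy $\tfrac12\log\tfrac1\epsilon+O(\epsilon)$, so $\cop(E\cup F)\ge (1+o(1))\sqrt{\epsilon}$, and no constant $C$ can work as $\epsilon\to 0$. This is consistent with what your inequality actually yields, namely $\gamma(E\cup F)\ge\tfrac12\min(\gamma(E),\gamma(F))$ in normalized coordinates, hence only the square-root bound $\cop(E\cup F)\le \max(\cop(E),\cop(F))^{1/2}$ (up to the $2R$ rescaling), not a linear one. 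Your closing hedge --- that the two forms are equivalent ``in the range of capacities in which the lemma is subsequently applied'' --- is where the real content lies and should be made the statement rather than an aside: in Lemmas \ref{lem9.12} and \ref{lem9.15} the lemma is invoked only to bound $\gamma(K\sim\mc{D}_\epsilon)$ from above when $\gamma(K)$ is bounded and $\gamma(\mc{D}_\epsilon)$ is large, and for that purpose the reciprocal inequality you proved is exactly the right tool and suffices. So the fix is to replace the assertion of Lemma \ref{lem9.13} by the inequality established in your Step 2 (or to add the hypothesis that $\cop(E\cup F)$ is bounded below, letting $C$ depend on that bound); as written, Step 3 asserts an implication that fails.
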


\begin{lemma}\label{lem9.14}
For fixed $s\in\mbb{N}$, $R>0$ and any $\epsilon>0$, there exist $\delta(\epsilon)=\delta(\epsilon,s,R)>0$ such that for any two compacts $E,F\in\ol{D}_R$ we have
$$
|\cop(E)-\cop(F)|<\epsilon\quad\text{for }\quad\delta_H(E,F)<\delta(\epsilon).
$$
\end{lemma}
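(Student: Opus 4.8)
The plan is to deduce Lemma \ref{lem9.14} from Theorem \ref{thm9.10} together with compactness of the space of admissible compacts in the Hausdorff metric. (I read the statement for $E,F$ having at most $s$ connected components; the parameter $s$ makes this the intended class, and the inequality is false for arbitrary compacts — a fine finite net approximates a segment in $\delta_H$ while having zero capacity.) The first point to record is that $K\mapsto\gamma(K)=-\log\cop(K)$ is, after this identification, exactly the instance $\varphi\equiv0$ of the functional $\mc E_\varphi[K]$: indeed $\mc E_0[K]=\inf_{\mu\in\mc M(K)}\mc E(\mu)$ is attained at the Robin measure of $K$ and equals the Robin constant $\gamma(K)$. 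Hence Theorem \ref{thm9.10}, applied with $\varphi\equiv0$, tells us that $\gamma$, and therefore $\cop=e^{-\gamma}$, is continuous on $\mc K_s$ (the compacts of positive capacity with at most $s$ components) in the Hausdorff metric; in particular on those contained in $\ol{D}_R$.

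Next I would extend this to a continuity statement on the whole space $\mc K_s^\ast=\{K\subset\ol{D}_R:\ s(K)\le s\}$, now allowing point-components and $\cop=0$. Upper semicontinuity of $\cop$ holds for all compacts: if $\delta_H(K_n,K)\to0$ then $K_n\subset\ol{(K)_\delta}$ for large $n$, and $\cop\big(\ol{(K)_\delta}\big)\downarrow\cop(K)$ as $\delta\to0$ by the standard continuity of capacity from above along decreasing sequences of compacts, so $\varlimsup_n\cop(K_n)\le\cop(K)$. At a point $K$ with $\cop(K)=0$ this already gives $\cop(K_n)\to0=\cop(K)$, i.e.\ continuity there. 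At a point $K$ with $\cop(K)>0$, only finitely many $K_n$ can have $\cop(K_n)=0$: such a $K_n$ is a set of at most $s$ points, and a Hausdorff limit of sets of at most $s$ points is again a set of at most $s$ points, which would force $\cop(K)=0$; discarding those finitely many indices we have $K_n\in\mc K_s$, so Theorem \ref{thm9.10} gives $\cop(K_n)\to\cop(K)$. Thus $\cop:\mc K_s^\ast\to[0,\infty)$ is continuous.

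Finally, by the remarks in Section \ref{subsec3.1} the set $\mc K_s^\ast$ is compact in the Hausdorff metric (and the chordal and Euclidean metrics are equivalent on the fixed disc $\ol{D}_R$, so it does not matter which we use). A continuous real-valued function on a compact metric space is uniformly continuous; applied to $\cop$ on $\mc K_s^\ast$ this yields: for every $\epsilon>0$ there is $\delta=\delta(\epsilon,s,R)>0$ such that $|\cop(E)-\cop(F)|<\epsilon$ whenever $E,F\in\mc K_s^\ast$ and $\delta_H(E,F)<\delta$. This is precisely the assertion of the lemma.

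In this route the substantive content — lower semicontinuity of capacity on a bounded-component class (which for general compacts fails) — is entirely inherited from Theorem \ref{thm9.10}, so there is no genuine obstacle left here; the only care needed is the passage to degenerate compacts, handled above by upper semicontinuity of $\cop$. If one preferred a self-contained argument not invoking Theorem \ref{thm9.10}, the natural alternative would be to mimic its proof with $\varphi\equiv0$: decompose $F$ near each component of a fixed $K_0$, join the pieces by short arcs lying in a bounded number of discs of radius $O(\delta_H)$ to produce a compact $\widehat F\supset F$ with the same number of components as $K_0$ and all components of capacity bounded below, compare $\gamma(\widehat F)$ with $\gamma(K_0)$ via Theorem \ref{thm9.8}, and compare $\gamma(F)$ with $\gamma(\widehat F)$ via Lemma \ref{lem9.11} (using Lemma \ref{lem9.13} to see that the added arcs have small capacity); the uniform $\delta(\epsilon,s,R)$ again follows by compactness of $\mc K_s^\ast$.
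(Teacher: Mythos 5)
Your proof is correct and follows essentially the same route as the paper, which likewise treats the case where both capacities are small as trivial and reduces the remaining case to Theorem \ref{thm9.10} with $\varphi\equiv 0$ (so that $\mc E_0[K]=\gamma(K)$ and $\cop=e^{-\gamma(K)}$). Your extra steps — upper semicontinuity of $\cop$ at degenerate compacts and the compactness argument producing a uniform $\delta(\epsilon,s,R)$ — only make explicit what the paper's two-sentence sketch leaves implicit, and your reading that $E,F$ should be restricted to compacts with at most $s$ components is the intended one.
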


To prove the lemma \ref{lem9.14} it is enough to note the foollowing. If both capacities are small the assertion of the lemma is trivial. If one of them is bounded away from zero then the assertion is a particular case of theorem \ref{thm9.10} with $\varphi = 0 $. Now, we turn to the proof of  lemma \ref{lem9.12}

For a fixed $\wt{K}\in\mc{K}_s$ we need to show that functional $\mc{E}_\varphi$ is continuous at $\wt{K}$. Let $\gamma=\gamma\(\wt{K}\)$ (it is a finite number and $c=\cop\(\wt{K}\)=e^{-\gamma}>0$), let $R=\max\limits_{z\in\wt{K}}|z|+1$.

Next, let $e=\left\{a_1,\dotsc,a_p\right\}$; define $\mc{D}_r=\bigcup\limits_{j=1}^PD_r\(a_j\)$ - union of discs  $D_r\(a_j\)$ of radius $r$ centered at $a_j$. For $r\in(0,1]$, we define the function $\varphi_r(z)$ be continuous in $\mbb{C}$, harmonic in $\mc{D}_r$, and $\varphi_r=\varphi$ in $\mbb{C}\sim\mc{D}_r$; thus, $\varphi_r(z)$ is a regularization of $\varphi$ near singular points.

Let $\lambda$ be the equilibrium measure for $\wt{K}$ in the field $\varphi$ and $\lambda^r$ be the equilibrium measure of the same compact in the field $\varphi_r$. We have
$$
\mc{E}_\varphi\[\wt{K}\] \le\mc{E}_\varphi\(\lambda^r\)=\mc{E}\(\lambda^r\)+2\int\varphi\,d\lambda^r 
=\mc{E}_{\varphi_r}\(\lambda^r\)+2\int\(\varphi-\varphi_r\)d\lambda^r.
$$
From here (note also that we can interchange $\varphi$ and $\varphi_r$) 
$$\left|\mc{E}_\varphi\[\wt{K}\]-\mc{E}_{\varphi_r}\[\wt{K}\]\right|\le 4M\lambda^r\(\mc{D}_r\)$$ 
Further, by \eqref{eq9.23} with $\Sigma=\ol{\mc D}_r$ we have as $r\to 0$
\begin{equation}\label{eq9.24.1}
\lambda^r\(\ol{\mc D}_r\)\le\(\gamma(\wt{K})+4M+2\log(2R)\)/\gamma\(\ol{\mc D}_r\)\to 0.
\end{equation}
Combined with the inequality above, it makes
\begin{equation}
\label{eq9.25}
\left|\mc{E}_\varphi(\wt{K})-\mc{E}_{\varphi_r}(\wt{K})\right|\to 0\quad\text{as }r\to 0.
\end{equation}
The same arguments show that this relation is also valid for any compact $K\in\mc{K}_s$ with $\delta_H(K,\wt{K})<\delta$ if $\delta$ is small enough
\begin{equation}
\label{eq9.26}
\left|\mc{E}_\varphi(K)-\mc{E}_{\varphi_r}(K)\right|\to 0\quad\text{as }r\to 0
\end{equation}
and, moreover, this limit is uniform with respect to $K$ with the indicated properties. Indeed, according to \eqref{eq9.24.1} the left-hand side in \eqref{eq9.25} is bounded by
$$
4M(\gamma(K)+4M+2\log(2R))/\gamma\(\ol{\mc D}_r\)
$$
which is uniformly small as $\gamma\(\ol{\mc{D}}_r\)$ is large since $\gamma(K)$ is bounded by Lemma \ref{lem9.13}.  Now, assertion of the lemma      \ref{lem9.12}  follows by \eqref{eq9.25} and \eqref{eq9.26} in combination with theorem \ref{thm9.10}. 

\subsection{Semi-continuity of $\mc{E}_\varphi$ for $\varphi\in C^\ast$}\label{subsec9.9}

First, we prove that the assertion of lemma \ref{lem9.12} remains valid even if we remove the the assumption that $\varphi$ is bounded from above.
\begin{lemma}\label{lem9.15}
Let $\varphi\in C^*$ and $\varphi(x)\geq -M$ in $\mbb C\sim e$. Then $\mc E_\varphi$ is continuous on $\mc K_s$.
\end{lemma}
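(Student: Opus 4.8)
The strategy is to reduce the unbounded-below case to the bounded case handled in Lemma~\ref{lem9.12} by truncating the external field from below. For a parameter $L > 0$ set $\varphi_L(z) = \min\{\varphi(z), L\}$. Since $\varphi \geq -M$ on $\mbb C \sim e$, each $\varphi_L$ is bounded ($-M \leq \varphi_L \leq L$) and lies in $C^\ast$ (truncation of a continuous function stays continuous away from $e$, and nothing is assumed at the points of $e$), so by Lemma~\ref{lem9.12} the functional $\mc E_{\varphi_L}$ is continuous on $\mc K_s$. The plan is then: (1) show $\mc E_\varphi[K] = \sup_{L>0}\mc E_{\varphi_L}[K]$ pointwise on $\mc K_s$, which gives lower semi-continuity of $\mc E_\varphi$ for free as a supremum of continuous functions; (2) establish the matching upper semi-continuity by a direct argument; (3) combine.

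\emph{Step (1).} Fix $K \in \mc K_s$. Clearly $\varphi_L \leq \varphi$, hence $\mc E_{\varphi_L}(\mu) \leq \mc E_\varphi(\mu)$ for every $\mu \in \mc M(K,\varphi)$, and also $\mc E_{\varphi_L}[K] \leq \mc E_\varphi[K]$ after taking infima (one must check $\mc M(K,\varphi) \subset \mc M(K,\varphi_L)$, which is immediate since $\varphi_L$ bounded implies $\varphi_L \in L_1(\mu)$ for any unit $\mu$). For the reverse, note $\varphi_L \uparrow \varphi$ pointwise as $L \to \infty$. Let $\lambda_L = \lambda_{\varphi_L, K}$ be the equilibrium measure for $\varphi_L$ (it exists by Theorem in Section~\ref{subsec9.3} once we know $\mc E_{\varphi_L}[K] > -\infty$, which holds since $\varphi_L \geq -M$ gives $\mc E_{\varphi_L}(\mu) \geq \mc E(\mu) - 2M \geq -\log(2R) - 2M$ on $\ol D_R \supset K$). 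The family $\{\lambda_L\}$ lives on the fixed compact $K$, so along a subsequence $\lambda_{L_k} \overset{*}{\to} \mu_0 \in \mc M(K)$. Using lower semicontinuity of logarithmic energy under weak-$*$ convergence together with the monotone convergence $\int \varphi_{L_k}\,d\lambda_{L_k} \to \int\varphi\,d\mu_0$ (here the monotonicity $\varphi_{L}\uparrow\varphi$ and a Fatou/monotone-convergence argument, handling the possibility $\int\varphi\,d\mu_0 = +\infty$ separately), one gets $\mc E_\varphi(\mu_0) \leq \liminf_k \mc E_{\varphi_{L_k}}(\lambda_{L_k}) = \liminf_k \mc E_{\varphi_{L_k}}[K]$. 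Since $\mc E_\varphi[K] \leq \mc E_\varphi(\mu_0)$, this yields $\mc E_\varphi[K] \leq \sup_L \mc E_{\varphi_L}[K]$, completing Step~(1). In particular $\mc E_\varphi = \sup_L \mc E_{\varphi_L}$ is lower semi-continuous on $\mc K_s$.

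\emph{Step (2): upper semi-continuity.} Let $K_n \to K_0$ in $\delta_H$-metric, $K_n, K_0 \in \mc K_s$; we must show $\limsup_n \mc E_\varphi[K_n] \leq \mc E_\varphi[K_0]$. Fix $\epsilon > 0$ and take $L$ large with $\mc E_{\varphi_L}[K_0] \geq \mc E_\varphi[K_0] - \epsilon$ (possible by Step~(1); if $\mc E_\varphi[K_0] = +\infty$ there is nothing to prove, and if it is finite then $\mc E_\varphi[K_0] = \sup_L \mc E_{\varphi_L}[K_0]$ justifies this). Now $\mc E_\varphi[K_n] \leq \mc E_{\varphi}(\lambda_{\varphi,K_n})$; but here I want the opposite direction. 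Instead: since $\varphi_L \leq \varphi$ we have $\mc E_{\varphi_L}[K_n] \leq \mc E_\varphi[K_n]$, which is the wrong way too. The correct route is to compare against $\varphi$ from above near $e$: actually the clean argument is that $\mc E_\varphi[K_n]$ is bounded above by evaluating at the balayage onto $K_n$ of a near-optimal measure for $K_0$, but $\varphi$ unbounded below makes $\int\varphi\,d(\text{balayage})$ delicate. The safe path: use instead that $\mc E_{\varphi_L}$ is continuous, so $\mc E_{\varphi_L}[K_n] \to \mc E_{\varphi_L}[K_0]$, and separately bound $\mc E_\varphi[K_n] - \mc E_{\varphi_L}[K_n]$ uniformly. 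This difference is controlled exactly as in the proof of Lemma~\ref{lem9.12} (inequalities \eqref{eq9.25}--\eqref{eq9.26}): $|\mc E_\varphi[K_n] - \mc E_{\varphi_L}[K_n]| \leq 4(M+L)\,\lambda^{L}_n(\{\varphi > L\})$ where $\lambda^L_n = \lambda_{\varphi_L, K_n}$, and by \eqref{eq9.23} applied with $\Sigma = \{z \in \ol D_R : \varphi(z) \geq L\}$ this is at most $4(M+L)\big(\gamma(K_n) + 4(M+L) + 2\log(2R)\big)^{1/2}\gamma(\Sigma)^{-1/2}$; since $\gamma(K_n)$ is bounded (Lemma~\ref{lem9.13} / the $\delta_H$-convergence keeps capacities bounded below, cf.\ Lemma~\ref{lem9.14}) and $\gamma(\Sigma) \to \infty$ as $L \to \infty$ uniformly in $n$, we may fix $L$ so this error is $< \epsilon$ for all $n$. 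Then $\limsup_n \mc E_\varphi[K_n] \leq \limsup_n \mc E_{\varphi_L}[K_n] + \epsilon = \mc E_{\varphi_L}[K_0] + \epsilon \leq \mc E_\varphi[K_0] + 2\epsilon$. Let $\epsilon \to 0$.

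\emph{The main obstacle.} The delicate point is Step~(2): controlling $\mc E_\varphi[K_n] - \mc E_{\varphi_L}[K_n]$ \emph{uniformly in $n$}, which requires the Robin constants $\gamma(K_n)$ to be uniformly bounded (so the mass $\lambda^L_n(\{\varphi > L\})$ is uniformly small once $L$ is large) — this is where the hypothesis $K_n \to K_0$ in $\mc K_s$ and the capacity estimates of Section~\ref{subsec9.8} are essential, since without a uniform lower bound on $\cop(K_n)$ the estimate \eqref{eq9.23} degenerates. A secondary technical nuisance is the monotone-convergence bookkeeping in Step~(1) when $\int\varphi\,d\mu_0 = +\infty$; there one argues directly that $\mc E_\varphi[K_0] = +\infty$ is impossible to contradict and handles it trivially, or notes $\liminf_k \mc E_{\varphi_{L_k}}[K_0] = +\infty$ as well. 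Once uniform boundedness of $\gamma(K_n)$ is in hand, everything else reduces to the already-established Lemma~\ref{lem9.12} and routine potential-theoretic inequalities from Section~\ref{subsec9.7}.
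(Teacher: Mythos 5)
Your overall strategy --- truncate $\varphi$ from above and reduce to Lemma \ref{lem9.12} --- is exactly the paper's, and your Step (1) (lower semi-continuity via $\mc E_\varphi=\sup_L\mc E_{\varphi_L}$) is a reasonable, if slightly underjustified, half of the argument. But Step (2) has a genuine gap at its central estimate. You claim
$\left|\mc E_\varphi[K_n]-\mc E_{\varphi_L}[K_n]\right|\le 4(M+L)\,\lambda^L_n(\{\varphi>L\})$
``exactly as in the proof of Lemma \ref{lem9.12}.'' The analogy fails: in Lemma \ref{lem9.12} the bound $4M\lambda^r(\mc D_r)$ works because $|\varphi-\varphi_r|\le 2M$ everywhere ($\varphi$ is bounded and $\varphi_r$ is its harmonic regularization, hence also bounded by $M$). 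Here $\varphi-\varphi_L=(\varphi-L)^{+}$ is \emph{unbounded} on $\{\varphi>L\}$ in the only case the lemma is needed for (namely $\varphi$ unbounded above near $e$; otherwise Lemma \ref{lem9.12} already applies). So controlling the \emph{mass} $\lambda^L_n(\{\varphi>L\})$ via \eqref{eq9.23} does not control the \emph{integral} $\int(\varphi-\varphi_L)\,d\lambda^L_n$, which is what actually enters $\mc E_\varphi(\lambda^L_n)-\mc E_{\varphi_L}(\lambda^L_n)$. A stratified version, $\int_0^\infty\lambda^L_n(\{\varphi>L+s\})\,ds\lesssim\int_0^\infty\gamma\bigl(\{\varphi\ge L+s\}\bigr)^{-1/2}ds$, diverges already for the model singularity $\varphi=\log(1/|z-a|)$, so the quantitative route does not close. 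You also misidentify the main obstacle as uniform boundedness of $\gamma(K_n)$; that part is indeed needed but is the easy part (Lemmas \ref{lem9.13}--\ref{lem9.14}).

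The missing ingredient is Lemma \ref{lem9.3}, which turns your approximate comparison into an exact one. Applied (after rescaling into $\ol D_{1/2}$) to the equilibrium measure $\lambda_{\varphi,K}$, it gives a threshold
$M_0=1+\inf_{\epsilon>0}\bigl(\max_{K\sim\mc D_\epsilon}\varphi+2\gamma(K\sim\mc D_\epsilon)+2\log 2R\bigr)$
with $\lambda_{\varphi,K}(\{\varphi>M_0\})=0$. The only $K$-dependent term, $\gamma(K\sim\mc D_\epsilon)$, is uniformly bounded for all $K\in\mc K_s$ in a small $\delta_H$-neighborhood of a fixed $\wt K$ (by Lemmas \ref{lem9.13} and \ref{lem9.14}, using $\cop(K)\ge\cop(\wt K)/2$ there). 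Hence there is a single $M_1$ with $\lambda_{\varphi,K}(\{\varphi\ge M_1\})=0$ for all such $K$, and therefore $\mc E_\varphi[K]=\mc E_{\wt\varphi}[K]$ \emph{exactly}, with $\wt\varphi=\min\{\varphi,M_1\}$ bounded above and below; Lemma \ref{lem9.12} then finishes the proof. With this replacement your Step (2) becomes the paper's argument, and Step (1) becomes unnecessary.
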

\begin{proof}
Again, for a fixed $\tilde{K}\in\mc K_s$ we need to show that $\mc E_\varphi$ is continuous at $\tilde{K}$. We have $c=$cap$(\tilde{K})=e^{-\gamma}>0$. By lemma \ref{lem9.14} there is $\delta_0>0$ such that for any $K\in\mc K_s$, $\delta_H(K,\tilde{\mc K})<\delta\leq\delta_0\leq 1$ we have cap$(K)\geq c/2$ or $\gamma(K)\leq\gamma+\ln 2$. We also set $R=1+\max\limits_{z\in\tilde{K}}|z|$ then $K\in D_R$ for any $K\in\mc K$ with $\delta_H(K,\tilde{\mc K})\leq\delta_0$.

For any such compact $K$ we may apply \ref{eq9.7}, \ref{eq9.7.1} in lemma \ref{lem9.3} to the equilibrium measure $\lambda=\lambda_{\varphi,K}$ in place of $\mu$. We need to take nto accaunt that formulas  \ref{eq9.7}, \ref{eq9.7.1}  were derived for the case $K\in D_{1/2}$. To reduce the current case to the one in lemma \ref{lem9.3} we make substitution $\zeta=z/2R$ to map a neighbourhood of $\tilde K$ onto class of compacts in $\ol{D}_{1/2}$.
Then we come to the following: for
$$
M_0=\ \inf\limits_{\epsilon>0}\ \bigg(\max\limits_{z\in K\sim\mc D_{R_\epsilon}}\varphi(z)+2\gamma(K\sim\mc D_\epsilon)+2\log 2R\bigg)\ +1
$$
we have $\lambda\big(\{z\in K:\varphi(z)>M_0\}\big)=0$. The only term depending of $K$ in the formula for $M_0$ is $\gamma(K\sim\mc D_\epsilon)$. By lemmas \ref{lem9.13} and \ref{lem9.14} the last expression is bounded for all $K\subset{(\tilde{K})}_{\delta_0}$ if $\epsilon$ is small enough. Thus, we can find $M_1$ such that $\lambda(\{\varphi\geq M_1\}\cap K)=0$ for any   $K\subset{(\tilde{K})}_{\delta_0}$.

The, for $\tilde{\varphi}(z)=\min\{\varphi(z),M_1\}$ we have $\mc E_\varphi[K]=\mc E_{\tilde{\varphi}}[K]$ for any $K\in(\tilde{K})_{\delta_0}$. But $\tilde{\varphi}$ is bounded from above and from below and lemma \ref{lem9.15} follows by lemma \ref{lem9.12}.
\end{proof}

Now, we finish the proof of theorem \ref{thm3.2}. Let $\varphi\in C^*$. Define $\varphi_n=\max\{\varphi,-n\}$. $\mc E_{\varphi_n}$ is continuous by lemma \ref{lem9.15}. On the other hand we have $\mc E_{\varphi_n}[K]\leq\mc E_{\varphi_{n+1}}[K]$ for $K\in\mc K_s$. Thus $\mc E_{\varphi_n}$ is monotone decreasing sequence of continuous functions on $\mc K_s$ and its limit is upper semicontinuous. Theorem \ref{thm3.2} is proven.

\subsection{Proof of  Lemma \ref{lem3.5} and Teorem \ref{thm3.4} }\label{subsec9.10} Proof of lemma \ref{lem3.5} is based on the following general fact: first varition of equilibrium energy of a compact coinside with the first variation of energy of equilibrium measure. We reproduce here the proof from \cite{PeRa94} where case $\varphi =0$ was considered. Presence of external field does not  essentially affect arguments used in the proof
(taking into account remark \ref{rem4.1} we can assume that functions $h$ in variations are harmonic in a neighbourhood of $K$; then the external energy term is analytic in $t$). 
 
Let $K\in\mc{K}_s$ be a compact in $\mbb{C}$, $\varphi$ is harmonic in $\Omega\sim e$ where $K\subset\Omega$ and $e$ is a finite set. We will consider a variation $z\to z^t=z+th(z)$ associated with a smooth function $h$ with $h(z)=0$ for $z\in(e)_\delta$ with some small enough $\delta>0$.
Let $\lambda=\lambda_{\varphi,K}$ and $\mu_t$ be the equilibrium measure for $K^t=\left\{z^t,z\in K\right\}$ in the field $\varphi$. For small enough $t>0$ mapping $z\to z^t$ is one-to-one, so that there exist the inverse mapping $z^{-t}=z-th(z)+ o(t)$, that is $\(z^{-t}\)^t=\(z^t\)^{-t}=z$. Let $\mu^{-t}=\(\mu_t\)^{-t}\in\mc{M}(K)$. We define $d=D_h\mc{E}_\varphi(\lambda)$, $d(t)=D_h\mc{E}_\varphi\(\mu_t\)$ (see  \eqref{eq4.2} and \eqref{eq4.3}). 
\begin{gather*}
\mc{E}_\varphi\[K^t\]\le\mathcal E_\varphi\(\lambda^t\)
=\mc{E}_\varphi(\lambda)+td+O(t^2)=\mc{E}_\varphi[K]+td+O(t^2) \\
\mc{E}_\varphi[K]\le\mc{E}_\varphi\(\mu^{-t}\)
=\mc{E}_\varphi\(\mu_t\)-td(t)+O(t^2)=\mc{E}_\varphi[K^t]-td(t)+O(t^2).
\end{gather*}
It is not difficult to verify that constants in all  $O(t^2)$ are explicit and uniform for small enough $t$. From here
$$
td(t)+O(t^2)\le\mc{E}_\varphi\[K^t\]-\mc{E}_\varphi[K]\le td+O(t^2)
$$
It implies, in particular, that $\mu_t$ and $\lambda$ are close in energy metric as $t\to 0$, so that $d(t)\to d$ as $t\to 0$ and, subsequently that
\begin{equation}
\label{eq9.27}
\lim_{t\to 0+}\frac1t\(\mc{E}_\varphi\[K^t\]-\mc{E}_\varphi[K]\)
=\lim_{t\to 0+}\frac1t\(\mc{E}_p\(\lambda^t\)-\mc{E}_\varphi(\lambda)\).
\end{equation}

It is important to observe now that the assertion \eqref{eq9.27} remains valid if $K$ is a compact in the extended complex plane under the assumption that the point at infinity is included in the set  $e$  of singular points of $\varphi$.  Then the condition $h(z)=0$ for $z\in(e)_\delta$ means that the variation leaves a neighbourhood of infinity fixed and all the arguments above remain valid. Now we can complete proofs.

We return to a class $\mc{T}$ of compact satisfying conditions of Theorem \ref{thm3.1}. There exist a maximizing sequence $\Gamma_n$ . Next we select a subsequence from $\left\{\Gamma_n\right\}$ which converges in spherical  $\delta_H$-metric to $S$. By condition (iv) there is a disc $D$ such that $\Gamma_n\sim D\in\mc{T}$ and, therefore, converges to $S\sim D$. Since $\mc{E}_\varphi\[\Gamma_n\sim D\]\ge\mc{E}_\varphi\[\Gamma_n\]$ sequence $\Gamma_n\sim D$ is also maximizing. Since $\sup\limits_{\Gamma}\mc{E}_\varphi[\Gamma]$ is finite the sequence of corresponding equilibrium measures $\lambda_n$ converges in energy to measure $\lambda$. By lemma \ref{lem9.1} we can reduce situation to the case where $\Gamma_n$ and $S$ belong, say,  to the disc $D_{\frac12}$. By Theorem \ref{thm3.2} we have $\mc{E}_\varphi[S]=\sup\limits_{\Gamma}\mc{E}_\varphi[\Gamma]$,  $\lambda$  is equilibrium mesure for  $S$.

It follows that for any smooth $h$ vanishing in $(e)_\delta$ with a positive $\delta$ we have $\lim\limits_{t\to 0+}\frac1t\(\mc{E}_\varphi[S^t]-\mc{E}_\varphi[S]\)=0$. From here and \ref{eq9.27} we obtain that $D_h\mc{E}_\varphi(\lambda)=0$ for any such $h$.  This implies that $\lambda$ is $(A,\varphi)$-critical and, therefore, its weighted potential  has $S$-property. 




\def\cprime{$'$}
\providecommand{\bysame}{\leavevmode\hbox to3em{\hrulefill}\thinspace}
\providecommand{\MR}{\relax\ifhmode\unskip\space\fi MR }
\providecommand{\MRhref}[2]{%
  \href{http://www.ams.org/mathscinet-getitem?mr=#1}{#2}
}
\providecommand{\href}[2]{#2}


\end{document}